\documentclass[11pt]{amsart}

\usepackage{amsmath,amssymb,amsthm}
\usepackage{mathrsfs}
\usepackage{hyperref}
\hypersetup{hidelinks}
\newcommand{\C}{\mathbb{C}}
\newcommand{\R}{\mathbb{R}}
\newcommand{\cO}{\mathcal{O}}
\newcommand{\cT}{\mathcal{T}}

\newcommand{\cM}{\mathcal{M}}
\newcommand{\cMs}{\mathcal{M}^{\mathrm{s}}}
\newcommand{\Fss}{\mathcal{F}^{\mathrm{ss}}}
\newcommand{\FS}{\mathcal{F}^{S}}
\newcommand{\Id}{\operatorname{Id}}
\newcommand{\red}{\mathrm{red}}
\newcommand{\pt}{\mathrm{pt}}

\DeclareMathOperator{\Aut}{Aut}
\DeclareMathOperator{\End}{End}
\DeclareMathOperator{\Spec}{Spec}
\DeclareMathOperator{\Hol}{Hol}

\DeclareMathOperator{\Lie}{Lie}
\DeclareMathOperator{\Diff}{Diff}
\DeclareMathOperator{\SL}{SL}
\DeclareMathOperator{\tr}{tr}
\DeclareMathOperator{\rank}{rank}
\DeclareMathOperator{\im}{im}

\theoremstyle{plain}
\newtheorem{thm}{Theorem}[section]
\newtheorem{lem}[thm]{Lemma}
\newtheorem{prop}[thm]{Proposition}
\newtheorem{cor}[thm]{Corollary}
\newtheorem{alphthm}{Theorem}

\theoremstyle{definition}
\newtheorem{defn}[thm]{Definition}

\theoremstyle{remark}

\title[Universal moduli of Higgs bundles]{An analytic construction of the universal moduli space of $\mathrm{SL}_r(\mathbb C)$-Higgs bundles}
\author{\textsf{Shiyu Cao}}
\address{Chern Institute of Mathematics, Nankai University}
\email{shiyucao@126.com}
\keywords{Higgs bundles, Teichm\"uller space, analytic moduli, Kuranishi spaces, gauge theory}
\date{}

\DeclareMathOperator{\Stab}{Stab}
\newcommand{\cH}{\mathcal{H}}
\newcommand{\cP}{\mathcal{P}}
\newcommand{\fk}{\mathfrak{k}}
\newcommand{\fg}{\mathfrak{g}}
\newcommand{\W}[1]{W^{#1,2}}
\DeclareMathOperator{\Vol}{Vol}
\DeclareMathOperator{\id}{id}

\begin{document}
\begin{abstract}
We construct the moduli space of marked polystable $\mathrm{SL}_r(\mathbb C)$-Higgs bundles by analytic methods. The resulting space is normal and Hausdorff. Its germ at each polystable point is the product of a Teichm\"uller neighborhood and the affine quotient of a quadratic cone. These local isomorphisms preserve the projection to Teichm\"uller space. The associated Kuranishi families identify parameter stabilizers with the full determinant-preserving automorphism groups. We prove agreement with the complex and unitary gauge-quotient topologies and establish the coarse moduli property for semistable families over reduced analytic bases. The mapping class group and Higgs scaling act holomorphically. On the stable locus, our analytic space agrees with the joint moduli space constructed by Collier, Toulisse, and Wentworth.
\end{abstract}

\maketitle

\section{Introduction}\label{sec:introduction}

Higgs bundles on a compact Riemann surface admit both algebraic and gauge-theoretic moduli constructions. In degree zero, the Hitchin--Kobayashi correspondence identifies polystable Higgs bundles with solutions of the Hitchin equations, and nonabelian Hodge theory identifies their isomorphism classes with semisimple local systems \cite{Hitchin87,Donaldson87,Corlette88,Simpson92}. The complex structure of the Higgs moduli space depends on the curve. Allowing the complex structure of the curve to vary leads to the universal moduli problem over Teichm\"uller space.

Relative algebraic Higgs moduli spaces are provided by Simpson's construction \cite[Theorem~4.7]{SimpsonModuliPartI}, \cite[Corollary~6.7 and Proposition~9.7]{SimpsonModuliPartII}. Pulling back from a finite-level moduli space of curves, Alessandrini and Collier obtain a complex analytic Higgs moduli space over Teichm\"uller space with the expected fibers and holomorphic mapping class group action \cite[Theorem~7.5 and Section~7.4]{AlessandriniCollierMaximalComponents}. Thus a marked analytic Higgs moduli space over Teichm\"uller space is already known to exist.

For a fixed curve, Fan constructs the moduli space by Kuranishi slices and analytic Hilbert quotients, proves normality and quadratic local models, and identifies the result with the algebraic moduli space \cite[Theorems~A--C]{Fan2020Construction}. Ono describes the deformation complex of a curve together with a Higgs bundle \cite{Ono2022Deformations}. When the Higgs bundle admits a harmonic metric, his product theorem identifies the Kuranishi germ of the pair with the product of the curve and Higgs Kuranishi germs \cite[Theorems~1.1 and~5.3]{Ono2023Structure}.

In this paper, we carry out the relative analytic construction for Higgs bundles with a specified determinant trivialization. Following Fan's construction, we obtain holomorphic Kuranishi families, identify their quotient topology, and glue the local quotients by holomorphic transitions. In the local product coordinates, the quadratic obstruction is independent of the Teichm\"uller coordinate $b$. We also identify the stabilizers and prove the coarse moduli property for families over reduced analytic bases, including families with a non-polystable central fiber.

On the stable locus, the resulting analytic space agrees with the joint moduli space of Collier, Toulisse, and Wentworth \cite[Theorem~4.6 and Section~4.8]{CTW25}. Further geometric structures on universal Higgs moduli spaces are studied in \cite{UniversalHitchinModuliSpaces,HitchinUniversalHiggsModuliSpace}. Fan subsequently established the orbit-type stratification and quasi-projectivity in the fixed-curve case \cite{Fan2020OrbitTypes,Fan2020QuasiProjectivity}.

\subsection{The analytic space and its local models}

We fix a closed, connected, oriented smooth surface $\Sigma$ of genus $g\geq2$, its Teichm\"uller space $\cT_g$, and an integer $r\geq1$. A marked $\mathrm{SL}_r(\mathbb C)$-Higgs bundle consists of a marked curve $X$, a rank-$r$ holomorphic bundle $E$, a specified holomorphic trivialization $\det E\simeq\cO_X$, and a trace-free Higgs field $\varphi\in H^0(X,\End E\otimes K_X)$. Markings are considered up to orientation-preserving isotopy, and isomorphisms preserve the marking, determinant trivialization, and Higgs field.

We denote by $\cM$ the set of polystable isomorphism classes and by $p\colon\cM\to\cT_g$ the forgetful map. Stability is defined in Definition~\ref{chlocal:defn:stability}. All gauge groups and stabilizers include every connected component. For $r=1$, the determinant trivialization identifies $E$ with $\cO_X$ and $\varphi=0$, so $\cM=\cT_g$ and Higgs scaling is trivial.

Our first result equips this set with a complex analytic structure compatible with the complex- and unitary-gauge quotient constructions.

\begin{alphthm}\label{thm:analytic-space}
The Kuranishi quotient charts endow $\cM$ with the structure of a reduced, Hausdorff, second-countable complex analytic space for which $p$ is holomorphic. The associated chart maps are open analytic embeddings, and the resulting structure is independent of the choice of slices.

The topology agrees with the quotient of polystable Higgs pairs by the complex gauge group and with the unitary quotient of the Hitchin-equation locus, both in the smooth category and for $W^{k,2}$ pairs with $W^{k+1,2}$ gauges, $k\geq4$. The analytic structure is also independent of the Sobolev index.
\end{alphthm}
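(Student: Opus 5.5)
The plan is to build charts from holomorphic Kuranishi families over Teichm\"uller neighborhoods and then show that the charts glue. Fix a polystable point $[(X_0,E_0,\varphi_0)]\in\cM$. By Hitchin--Kobayashi we may choose a harmonic metric, and then the stabilizer $G_0=\Aut(E_0,\varphi_0)$, taken inside the determinant-preserving gauge group, is the complexification of the compact group $K_0$ of unitary automorphisms. Over a small Teichm\"uller ball $B\ni b_0$ we fix a smooth trivialization of the universal curve, so that $X_b$ becomes a family of complex structures on $\Sigma$, and we transport $E_0$ by a fixed $C^\infty$ bundle. The deformation complex $C^0\to C^1\to C^2$ of $(\bar\partial_{E},\varphi)$ in $W^{k,2}$ varies holomorphically in $b$. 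Following Fan and Ono's product theorem, we take the Kuranishi slice to be the zero set of the projection to harmonic $\mathbb H^2$ of the Maurer--Cartan map, restricted to $B\times(\text{ball in }\mathbb H^1)$. The quadratic cone $Q=\{x\in\mathbb H^1:[x,x]_{\mathbb H^2}=0\}$ then gives, after a $b$-dependent, $K_0$-equivariant holomorphic change of coordinates (implicit function theorem with holomorphic dependence on $b$, using Ono's harmonic-metric splitting), a $G_0$-equivariant germ isomorphism $Z\cong B\times Q$ over $B$. The chart is $B\times(Q/\!\!/G_0)\to\cM$, whose image is the set of polystable points near the base point, and it maps through $p$ to $B$.

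The steps are as follows. (1) Establish the local slice theorem. Every pair $W^{k,2}$-close to the base pair is complex-gauge equivalent, by a $W^{k+1,2}$ gauge, to a point of the slice. Two slice points are equivalent iff they are related by $G_0$ near the identity. This comes from the implicit function theorem on the Coulomb condition, plus a compactness argument. (2) Show that polystable slice points correspond exactly to closed $G_0$-orbits in $Q$. This uses Kempf--Ness, or Fan's argument via the moment map and Hitchin's equations. It follows that the map from $(B\times Q)/\!\!/G_0$ to the polystable quotient is a bijection onto an open set. (3) Check that the transition maps are holomorphic. A point in the overlap of two charts has, by Luna's slice theorem applied in one chart, a local model that is itself a Kuranishi chart at the new base point. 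Any two Kuranishi slices at a point are related by a holomorphic $G$-equivariant map, because the gauge transformation realizing the identification depends holomorphically on the slice point via the implicit function theorem. It therefore descends to analytic Hilbert quotients. This gives well-definedness, independence of slices, and the open embedding statement. (4) Prove that the space is Hausdorff and second countable. Second countability follows from separability of the Sobolev spaces. For Hausdorffness, separate distinct polystable classes over the same $b$ using the unitary quotient: the Hitchin-equation locus modulo unitary gauge is Hausdorff by Uhlenbeck-type compactness with fixed complex structure varying continuously. We transfer this via the homeomorphism in (5). Reducedness and normality come from $Q/\!\!/G_0$, since $Q$ is a reduced cone; normality is postponed.

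(5) Compare topologies. The inclusion of the Hitchin locus induces a continuous bijection from the unitary quotient to the complex polystable quotient. To show it is a homeomorphism, we show it is proper locally: solutions near a polystable point are uniformly controlled by the slice and Kempf--Ness retraction. Comparison with the chart topology follows from (1)--(2). For Sobolev independence, elliptic regularity puts slice elements in $C^\infty$. The slices for different $k$ therefore coincide as finite-dimensional analytic sets, with the same $G_0$ action.

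The main obstacle I expect is step (3) together with the Hausdorff property. The hard part is showing that gauge equivalences between nearby points of different slices come from holomorphically parametrized gauge transformations even when the stabilizer jumps. Uniform control of the implicit function theorem in $b$ also requires care. I would first try to reduce everything to the fixed-curve statements of Fan, applied fiberwise, by trivializing the family of complex structures. The $b$-dependence would then be treated as an additional holomorphic parameter in every implicit-function argument.
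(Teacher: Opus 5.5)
Your overall architecture matches the paper's, but step (1) has a genuine gap. The implicit function theorem on the Coulomb condition only controls gauge transformations in a small neighbourhood of the identity. A compactness argument is not available for the complex gauge group, since properness holds only for the unitary one. Worse, the conclusion you state is false. Take $x=(L\oplus L^{-1},0)$ with $\deg L=0$ and $L^2\not\simeq\cO_X$. Conjugation $v\mapsto gvg^{-1}$ by $g=\operatorname{diag}(t,t^{-1})\in H_x$ multiplies a class $e\in H^1(X,L^2)\subset Q_x$ by $t^2$. So for $0<|t|<1$ the small slice points $e$ and $t^2e$ are equivalent. Within $H_x$, however, they are related only by $\pm\operatorname{diag}(t,t^{-1})$, which leaves every fixed neighbourhood of the identity as $t\to0$. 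These are exactly the identifications that $Q_x/\!\!/H_x$ has to record.

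What you need is that \emph{every} $W^{k+1,2}$ complex gauge transformation, of arbitrary size, between two sufficiently small Coulomb pairs over the same $b$ lies in $H_x$ (Proposition~\ref{chlocal:prop:local-gauge-uniqueness}). The missing idea is that this follows from linearity in $g$, not from compactness:
\begin{itemize}
\item Multiplying the gauge equation by $g$ gives $D''_0g=(ga_2-a_1g+\iota_{\mu(b)}\partial_hg,\ g\psi_2-\psi_1g)$.
\item Split $g=g_\parallel+g_\perp$ along the finite-dimensional algebra $\mathcal A_0=\ker D''_0$ of parallel endomorphisms. After applying $(D''_0)^*$, the $g_\parallel$-terms drop out because of the Coulomb conditions.
\item The estimate $\|g_\perp\|_{1,2}\le C_0\|D''_0g_\perp\|_2$ then forces $g_\perp=0$ once $\|\alpha_i\|_{k,2}$ and $\|\mu(b)\|_{C^0}$ lie below a threshold that does not depend on $g$.
\end{itemize}
The paper uses this lemma, extended to $B\times\widetilde U_x$ in Corollary~\ref{chlocal:cor:extended-stabilizers}, in three places. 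It gives injectivity of the charts. It gives the implication in your step (2) that a polystable slice point lies on a closed $H_x$-orbit. It gives the $H_x$-invariance of the transition maps in step (3).

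In step (3), you claim that a Luna slice at a point $z$ of one chart ``is itself a Kuranishi chart at the new base point''. That is a substantial identification of germs, stabilizers and families which you do not justify, and it is not needed. The paper instead proves local completeness of the Kuranishi family at non-central slice points (Proposition~\ref{prop:nearby-completeness}). Because $u_x$ and $c_x$ are holomorphic on an open set of ambient pairs $(b,\alpha)$, not just at the centre, a family whose fibre at $s_*$ is $\theta_{x_2}(z_2)$ is classified near $z_2$ by $c_{x_2}$. This gives a holomorphic $\gamma\colon\mathcal V_1\to Z_2$ near a closed-orbit point $z_1$. The lemma above makes $q_{B_2}\circ\gamma$ extend to an $H_{x_1}$-invariant holomorphic map on $H_{x_1}\cdot\mathcal V_1$. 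This invariant open set contains a closed orbit, hence a saturated set $q_{B_1}^{-1}(O_1)$, so the map descends through the analytic Hilbert quotient. Exchanging indices gives the inverse, and reducedness gives the cocycle identity. No equivariant map between two different slices is required.

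Finally, the $b$-dependent change of coordinates in your construction of $Z\simeq B\times Q$ is unnecessary. With the central harmonic projections, the Beltrami term $\partial_h(\iota_{\mu(b)}\varphi_{\mathrm{tot}})$ equals $D'(\iota_{\mu(b)}\varphi_{\mathrm{tot}})$ and is annihilated by $\mathsf H_2$. Hence $\kappa(b,v)=\tfrac12\mathsf H_2[v,v]$ exactly (Theorem~\ref{chlocal:thm:quadratic-product}). This spares you from checking that an isomorphism from Ono's product theorem is $H_x$-equivariant and compatible with the families.
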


At a polystable point $x=(X,E,\varphi)\in\cM$, let
$$
H_x=\{g\in\Aut(E,\varphi):\det g=1\},
$$
and let $K_x$ be its unitary subgroup for a harmonic metric compatible with the determinant trivialization. We write $V_x=\mathbb H^1(C_x^\bullet)$ for the trace-free deformation complex of Definition~\ref{chlocal:defn:deformation-complex}. Using harmonic representatives and the degree-two harmonic projection $\mathsf H_{2,x}$, we set
$$
Q_x=\bigl\{v\in V_x:\mathsf H_{2,x}[v,v]=0\bigr\}_{\red},
\qquad Y_x=Q_x/\!/H_x.
$$
The quotient is the analytification of the affine GIT quotient $\operatorname{Spec}\mathbb C[Q_x]^{H_x}$. We choose a coordinate neighborhood $B\subset\cT_g$ of $X$ and represent $X$ by $0$.

The local analytic structure is described by a product with the fixed-curve quadratic quotient.

\begin{alphthm}\label{thm:normal-model}
The space $\cM$ is normal. At each polystable point $x$ there is an analytic germ isomorphism
$$
(\cM,x)\simeq_{(\cT_g,X)}
\bigl(B\times Y_x,(X,[0])\bigr).
$$
The local families identify the stabilizer of $v$ in $H_x$ with the determinant-preserving automorphism group of the corresponding Higgs bundle. The quotient $Y_x$ is normal at every point.
\end{alphthm}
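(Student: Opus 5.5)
The plan is to build the local model at a polystable point $x=(X,E,\varphi)$ from a holomorphic Kuranishi family over $B$, straighten its obstruction to the quadratic cone, and then pass to quotients.

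\textbf{Step 1: Kuranishi slice at $x$.} Fix a harmonic metric $h$ compatible with the determinant trivialization. Work in the $W^{k,2}$ setting, $k\ge4$. Trivialize the underlying smooth data over $B$ by a smooth family of complex structures $J_b$, $b\in B$, $J_0=J_X$, depending holomorphically on $b$. Take the Coulomb slice through $(\bar\partial_E,\varphi)$ for the complex gauge group with respect to the trace-free complex $C_x^\bullet$. On the slice $\{(\bar\partial_E+\alpha,\varphi+\psi)\}$ with $\alpha,\psi$ of the relevant types for $J_b$, integrability reads $\bar\partial_{E,b}\psi+[\alpha,\psi]=0$ up to the $b$-dependent conjugation of types.

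Write $\pi_{\mathsf H}$ for the harmonic projection and $\pi_{\mathsf H}^\perp=1-\pi_{\mathsf H}$. Use the implicit function theorem on
$$
v+\bar\partial^*_{C}G\,\pi_{\mathsf H}^\perp F_b(\cdot),
$$
where $F_b$ is the integrability map. This gives a holomorphic map
$$
\kappa\colon B\times V_x\supset U\to \mathbb H^2(C_x^\bullet),
\qquad
\kappa(b,v)=\mathsf H_{2,x}[v,v]+O(|v|^3+|b||v|^2).
$$
It is $H_x$-equivariant, and its zero set parametrizes a holomorphic family of Higgs bundles over the corresponding analytic set. Here I use that $H_x$ is reductive (it is the complexification of $K_x$, since $x$ is polystable) and that the $b$-dependence can be absorbed. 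The latter relies on Ono's product theorem for pairs admitting a harmonic metric \cite[Theorem~5.3]{Ono2023Structure}: since $\mathbb H^2$ of the curve deformation complex vanishes, the Kuranishi germ of the pair is a product. Concretely, I would choose the family $J_b$ and the identification of complexes so that the Teichmüller directions act trivially on $\mathbb H^2(C_x^\bullet)$ to all orders. Then I would apply an $H_x$-equivariant holomorphic change of coordinates $v\mapsto v+O(|v|^2+|b||v|)$. Together these bring $\kappa$ to the form $\kappa(b,v)=\mathsf H_{2,x}[v,v]$.

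\textbf{Step 2: Formality (expected main obstacle).} The cubic and higher terms must be removed equivariantly. Here I would follow Fan's fixed-curve argument, which uses the Kähler identities for the harmonic metric, i.e.\ the $\partial\bar\partial$-type lemma for the Higgs complex (Simpson formality). The key is to run it uniformly in $b$, using the product structure from Step 1. I expect this uniformity, with holomorphic dependence on $b$, to be the technical heart of the proof. The result is an $H_x$-equivariant germ isomorphism of the reduced zero locus with $B\times Q_x$.

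\textbf{Step 3: Quotients and normality.} Next, I would use the theorem asserting that the Kuranishi quotient charts are open analytic embeddings onto neighbourhoods in $\cM$. This identifies $(\cM,x)$ with $\bigl(B\times Q_x\bigr)/\!/H_x$ over $B$. Because $H_x$ acts trivially on $B$, this quotient is $B\times Y_x$.

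For the stabilizer claim, the slice property (local properness of the complex gauge action near the polystable orbit, via Luna-type slice arguments) implies the following: any determinant-preserving automorphism of the Higgs bundle at $v$ is conjugate into $H_x$, and it fixes $v$ in the slice. Conversely, elements of $\Stab_{H_x}(v)$ are automorphisms.

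For normality, $Q_x$ is a quadratic cone in $V_x$ with reductive group action. I would show that $Q_x$ is normal by adapting Fan's argument, which shows $Q_x$ is a normal complete intersection by checking codimension of the singular locus (Serre's criterion) via the moment-map/hyperkähler structure. Invariants of a normal affine variety under a reductive group are normal, so $Y_x$ is normal at every point. Products with the smooth $B$ preserve normality, so $\cM$ is normal.
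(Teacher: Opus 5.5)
The overall architecture matches the paper: a relative Kuranishi family over $B\times U$, reduction to the quadratic cone, an analytic Hilbert quotient, a product with $B$, and normality of the product. There are, however, two gaps. First, the step you call the main obstacle rests on a mechanism you have not found, and your substitute does not work as written. In the paper all harmonic data are frozen at $x$ and the correction $w$ lies in $\im D_1^*$. Then $\kappa(b,v)=\tfrac12\mathsf H_2[v,v]$ holds \emph{exactly} on all of $B\times U$ (Theorem~\ref{chlocal:thm:quadratic-product}), for two reasons. The K\"ahler identity writes $w=D'f$, so $\mathsf H_2[A,A]=\mathsf H_2[v,v]$, because $D'$ is a graded derivation with $D'v=0$ and $\mathsf H_2D'=0$. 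And the Beltrami term $\partial_h(\iota_{\mu(b)}\varphi_{\mathrm{tot}})$ equals $D'(\iota_{\mu(b)}\varphi_{\mathrm{tot}})$: the contracted form has type $(0,1)$ and its bracket with $\varphi^{*h}$ vanishes on a curve, so $\mathsf H_2$ kills it. Hence the reduced zero locus is literally $B\times(Q_x\cap U)$ under the identity map. $H_x$-equivariance, compatibility with $\operatorname{pr}_B$, and compatibility with the families are then automatic, and this is what lets the saturated quotient $B\times\mathcal W_x$ be formed. Your version has three problems. You state an expansion with no $O(|b|)$ or $O(|b||v|)$ terms without justification. You use Ono's product theorem as a germ identification, which would still have to be shown $H_x$-equivariant, fibred over $B$, and induced by the families. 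And you propose a coordinate change $v\mapsto v+O(|v|^2+|b||v|)$ making $\kappa$ exactly quadratic. A quadratic map plus higher-order terms is not in general equivalent to its quadratic part, since the singularity is not isolated. Even a germ-level equivalence would then have to be extended $H_x$-equivariantly to a saturated neighbourhood before it descends to $B\times Y_x$.

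Second, the stabilizer argument contains an actual error. The complex gauge action is \emph{not} locally proper near a polystable point with positive-dimensional $H_x$. If $v\in Q_x$ has a non-closed orbit with $0\in\overline{H_x\cdot v}$, then a divergent sequence $\sigma_n\in H_x$ with $v^{\sigma_n}\to0$ carries $\widetilde\theta_x(0,v)$ to $\widetilde\theta_x(0,v^{\sigma_n})\to x$. Moreover, ``conjugate into $H_x$'' is too weak to conclude that an automorphism fixes $v$. You need the gauge itself to lie in $H_x$ and act on the slice by conjugation. The paper gets this from Proposition~\ref{chlocal:prop:local-gauge-uniqueness}. For any gauge $g$ between two small Coulomb representatives, split $g=g_\parallel+g_\perp$ along $\mathcal A_0=\C\Id_E\oplus H^0$. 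The $g_\parallel$-terms cancel after applying $(D''_0)^*$, by the Coulomb conditions and parallelism. The energy inequality together with the kernel estimate then forces $g_\perp=0$, with constants independent of $g$. This linear rigidity, not properness, is the key. Points of $\widetilde U_x$ outside the small slice are handled by translating with $H_x$ (Corollary~\ref{chlocal:cor:extended-stabilizers}).

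Finally, your normality route is different from the paper's and needs a stronger input. Deducing normality of $Y_x$ from normality of $Q_x$ via invariants requires normality of the cone $Q_x$ itself, which you only assert. The paper needs only normality of the $\mathrm{GL}_r$ germ $Y_x\times S_X$, taken from Fan. It deduces normality of $Y_x$ near $[0]$ by a retraction argument on local rings, and spreads it to all of $Y_x$ by the contracting $\C^*$-action.
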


\subsection{Families and natural actions}

\begin{defn}\label{defn:family-functors}
For a reduced finite-dimensional Hausdorff complex analytic space $S$, let $\Fss(S)$ be the set of isomorphism classes of holomorphic families of marked, smooth, proper curves of genus $g$ over $S$, equipped with families of semistable rank-$r$ Higgs bundles, determinant trivializations, and trace-free relative Higgs fields. Pullback defines a contravariant functor $\Fss$.

Two families are fiberwise S-equivalent if their polystable graded objects, with the induced markings and determinant trivializations, are isomorphic at every point of $S$. We write $\FS(S)$ for the quotient by this equivalence.
\end{defn}

The space satisfies the following coarse moduli property for semistable families.

\begin{alphthm}\label{thm:coarse}
There is a natural transformation $\eta\colon\Fss\to\Hol(-,\cM)$ assigning to a family its fiberwise polystable graded class. Its composition with $p$ is the classifying map of the underlying marked curve family. Two families have the same classifying map exactly when they are fiberwise S-equivalent, and $\FS(\pt)\simeq\cM$.

For every Hausdorff complex analytic space $T$, each natural transformation $\Theta\colon\Fss\to\Hol(-,T)$ factors uniquely as $\Theta=f\circ\eta$ for a holomorphic map $f\colon\cM\to T$. The same property holds with $\FS$ as source.
\end{alphthm}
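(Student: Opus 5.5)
We construct $\eta$ locally over the base, using the Kuranishi families behind Theorems~\ref{thm:analytic-space} and~\ref{thm:normal-model}, and then deduce the universal property from the local quotient description of $\cM$.

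\textbf{Defining $\eta$.} Let $(\mathcal X\to S,\mathcal E,\varphi)$ be a family in $\Fss(S)$ and fix $s_0\in S$. Let $x$ be the polystable graded object of the central fiber. After shrinking $S$, we trivialize the underlying smooth family of marked surfaces, so the family becomes a holomorphic map from $S$ into the space of $W^{k,2}$ pairs (complex structure on $\Sigma$, $\bar\partial$-operator, Higgs field). Its central value lies in the complex-gauge orbit closure of the polystable point $x$. By Fan's local slice theorem in its relative form (the Kuranishi slice used to build the charts), a neighborhood of $x$ in the semistable locus retracts, after complex gauge, onto the slice. The analytic Hilbert quotient $\pi\colon\text{slice}\to B\times Y_x$ is the chart map. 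This gives a holomorphic map $S\to B\times Y_x$ sending $s$ to the class whose closed orbit is the polystable graded object of the fiber at $s$. The first coordinate is the classifying map to $\cT_g$ because $p$ is the first projection in the chart. These local maps agree on overlaps because pointwise they give the polystable graded class, and $\cM$ is Hausdorff. Holomorphy then glues, since $S$ is reduced and a map between reduced spaces is determined by its points. Naturality under pullback is immediate.

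\textbf{S-equivalence and the point case.} By definition, $\eta$ depends only on the fiberwise polystable graded objects. Conversely, equal classifying maps mean equal graded classes at every point, which is fiberwise S-equivalence. Hence $\eta$ factors through $\FS$, and over a point $\FS(\pt)\simeq\cM$ tautologically.

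\textbf{Universality.} Let $\Theta\colon\Fss\to\Hol(-,T)$ be a natural transformation. Apply $\Theta$ to the Kuranishi family $\mathcal U_x$ over the reduced slice base $Z_x$, which is an analytic germ lying over $B\times Q_x$. This gives a holomorphic map $\Theta(\mathcal U_x)\colon Z_x\to T$. We want it to be $H_x$-invariant and to descend through the analytic Hilbert quotient $Z_x\to B\times Y_x$.

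\emph{Invariance.} For $h\in H_x$, the families $\mathcal U_x$ and $h^*\mathcal U_x$ are isomorphic via $h$, so naturality gives $\Theta\circ h=\Theta$.

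\emph{Descent.} Invariant holomorphic maps to a Hausdorff space are constant on orbit closures. The universal property of analytic Hilbert quotients (Heinzner) then yields a unique holomorphic $f_x$ on the chart. For maps into an arbitrary Hausdorff analytic $T$, we first reduce to the local case: take an affine chart of $T$ around the image of $x$, using continuity and invariance of the fibers of the quotient under orbit closure, and there use $\cO(Z)^{H_x}=\cO(Y_x)$.

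\emph{Gluing.} The $f_x$ agree on overlaps, because on points $f$ must send $[y]$ to $\Theta$ of the constant family with fiber $y$. Applying naturality to $\pt\to Z_x$ gives $f\circ\eta=\Theta$ pointwise, hence as maps since all bases are reduced. Uniqueness follows because $\eta$ is surjective on points ($\FS(\pt)\simeq\cM$) and $\cM$ is reduced.

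\textbf{Main obstacle.} The hardest step is the first one: showing that a family whose central fiber is only semistable maps holomorphically into the chart. This requires the slice theorem to absorb nearby non-closed orbits holomorphically in the parameter $s$, with the parameter $b\in B$ carried along. I would try to realize the whole $W^{k,2}$ neighborhood as a saturated open subset of the complex-gauge-equivariant model $\mathcal G^{\C}\times_{H_x}\text{slice}$. Composing the family with the projection to slice$/\!/H_x$ is then holomorphic, because the projection is an $H_x$-invariant holomorphic map. The complication is that the family $S\to$ pairs is only defined up to gauge locally. This is harmless, since the projection is gauge-invariant.
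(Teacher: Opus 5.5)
Your overall architecture is the paper's. You lift a family locally into the Kuranishi chart at the graded object of the central fiber and compose with $q_B$ to define $\eta$. You then apply $\Theta$ to the Kuranishi family, use $H_x$-invariance, descend through the analytic Hilbert quotient, and glue by reducedness.

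The gap is the step you flag as the main obstacle; it is both misstated and left open. You write that the central value lies in the complex-gauge orbit closure of $x$, but this is backwards. The orbit of the polystable point $x$ is closed, so a non-polystable central fiber lies neither in it nor near it. In particular, the gauge-fixing map $c_x$ of Proposition~\ref{chlocal:prop:kuranishi-reduction}, defined only on a small neighborhood $\mathcal V$ of $(0,0)$, does not see your family.

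What is needed is the reverse relation, realized concretely. A harmonic representative of $x$, with its induced determinant trivialization, must lie in the $W^{k,2}$-closure of the \emph{determinant-one} complex-gauge orbit of the central value. The paper proves this in Proposition~\ref{prop:semistable-lifting} from a Jordan--H\"older filtration with a smooth splitting $\bigoplus_iE_i^{\mathrm{gr}}$. The gauges $g_t$ acting on $E_i^{\mathrm{gr}}$ by $t^{\,ri-\sum_j jr_j}$ have determinant one, which the naive weights $t^i$ do not. They multiply each off-diagonal block $\operatorname{Hom}(E_j^{\mathrm{gr}},E_i^{\mathrm{gr}})$, $i<j$, by $t^{r(j-i)}$, so they converge to the graded object in every Sobolev norm.

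One then fixes a single $t$ and applies $g_t$ to the whole family. A fixed gauge acts holomorphically, and its Beltrami term $-g_t^{-1}\iota_{\mu(b)}\partial_hg_t$ vanishes at $s_0$. So after shrinking $S$, the coefficients $(b(s),\alpha(s)^{g_t})$ stay in $\mathcal V$. Then $\gamma(s)=c_x(b(s),\alpha(s)^{g_t})$ is holomorphic even for singular $S$, because $c_x$ is defined on an open subset of the ambient space, not only on $F^{-1}(0)$.

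No Luna-type model of a neighborhood as $\mathcal G^{\C}\times_{H_x}\mathrm{slice}$ is required. Such a model would not close the gap anyway: the central value lies in its saturated image only if its orbit meets the slice, which is exactly the missing degeneration.

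The same degeneration carries weight in your universality argument. Naturality at points gives $\Theta_S(\mathcal E)(s)=\Theta_{\pt}(\mathcal E_s)$, while $f(\eta_S(\mathcal E)(s))=\Theta_{\pt}(\operatorname{gr}\mathcal E_s)$. These agree only because a non-polystable $\mathcal E_s$ is isomorphic to a fiber of $\widetilde\theta_x$ over a non-closed orbit. There, invariance and continuity of $\Theta_x$ force the value at the closed orbit in its closure, and that is again the lifting statement.

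Your pointwise-plus-reducedness route is otherwise fine. It even spares you the holomorphic family isomorphism $\exp(-u_x)g_t^{-1}\widehat\Xi_s$ that the paper constructs.

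Finally, $Z_x=B\times(Q_x\cap U)$ is not $H_x$-stable, and $Z_x\to B\times Y_x$ is not an analytic Hilbert quotient. The invariance $\Theta\circ h=\Theta$ and the descent must therefore be applied to the equivariant extension $\widetilde\theta_x$ over the saturated set $B\times\widetilde U_x$ (Lemma~\ref{chlocal:lem:extended-family} and Theorem~\ref{chlocal:thm:saturated-quotient}). This is a repairable imprecision rather than a gap.
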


We also establish the natural actions and the comparison on the stable locus.

\begin{alphthm}\label{thm:actions-comparison}
The mapping class group $\Gamma_g=\pi_0(\Diff^+(\Sigma))$ acts biholomorphically on $\cM$, covering its action on $\cT_g$. Higgs scaling gives a holomorphic action
$$
\C^*\times\cM\longrightarrow\cM,
\qquad (\lambda,[X,E,\varphi])\longmapsto[X,E,\lambda\varphi].
$$
These actions commute and preserve the automorphism groups under their natural identifications.

The stable locus $\cMs$ is open, with stabilizer $\{\zeta\Id_E:\zeta^r=1\}$. For $r\geq2$, it is biholomorphic over $\cT_g$ to the underlying analytic space of the stable joint $\mathrm{SL}_r(\C)$ moduli space constructed in \cite{CTW25}. The comparison is induced in both directions by holomorphic families, uses the normalization $\Phi=2i\varphi$, and is equivariant for the two actions.
\end{alphthm}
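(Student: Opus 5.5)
The plan is to prove the four assertions of Theorem~\ref{thm:actions-comparison} in the order stated. Each one is reduced to Theorem~\ref{thm:coarse} or to the local charts of Theorem~\ref{thm:normal-model}.

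\emph{The actions.} For $\gamma\in\Gamma_g$, choose a representative diffeomorphism $f$ and precompose markings with $f$. On a semistable family over $S$ this produces another semistable family over $S$. The construction is natural in $S$ and independent of the isotopy class, since markings are taken up to isotopy. Composing with $\eta$ gives a natural transformation $\Fss\to\Hol(-,\cM)$. By the universal property in Theorem~\ref{thm:coarse}, it is induced by a holomorphic map $\gamma_*\colon\cM\to\cM$. The inverse $\gamma^{-1}$ gives the inverse map, so $\gamma_*$ is biholomorphic, and it covers the action on $\cT_g$ because $p\circ\eta$ is the classifying map of the curve family.

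For scaling, use the family over $\C^*\times S$ given by $(X_s,E_s,\lambda\varphi_s)$. It is semistable because $\varphi$ and $\lambda\varphi$ have the same invariant subbundles. Applying Theorem~\ref{thm:coarse} to the base $\C^*\times S$ yields a holomorphic map $\C^*\times\cM\to\cM$. Strictly, the universal property gives maps out of $\cM$ and not out of $\C^*\times\cM$. I would handle this by applying the factorization to $T=\Hol$-valued data chartwise: on each Kuranishi chart $B\times Q_x$ the universal family is defined, and scaling it gives a holomorphic $H_x$-invariant map $\C^*\times B\times Q_x\to\cM$. This map descends to $\C^*\times B\times Y_x$ because the analytic Hilbert quotient is categorical, and the product of $\C^*$ with a categorical quotient is again a categorical quotient.

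Both actions act on the data $(X,E,\varphi)$ while keeping $E$ fixed, so they commute. An automorphism $g$ of $(E,\varphi)$ is also an automorphism of $(E,\lambda\varphi)$ and of the remarked pair, which gives the identification of stabilizers.

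\emph{The stable locus.} By Theorem~\ref{thm:normal-model}, stabilizers are identified with $H_v$. At a stable point, $\Aut(E,\varphi)=\C^*$, so $H_x=\{\zeta\Id:\zeta^r=1\}$. This group acts trivially on $V_x$, because the adjoint action of scalars is trivial. Stability is open in the Kuranishi family by the usual semicontinuity of destabilizing subsheaves; alternatively, a point of the chart is stable exactly when its stabilizer is finite, and finiteness of stabilizers is an open condition for reductive actions.

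\emph{The comparison with \cite{CTW25}.} Their joint moduli space carries a universal family locally over its stable locus. More precisely, it has local families of Higgs bundles $(E,\Phi)$ over marked curves with the corresponding determinant convention. Replacing $\Phi$ by $\varphi=\Phi/2i$ gives families in $\Fss$, and $\eta$ turns these into holomorphic maps into $\cMs$ that glue to a global map. Conversely, our Kuranishi families, restricted to the stable locus where $H_x$ acts trivially, are holomorphic families on the charts. The universal/coarse property of the CTW space for holomorphic families then gives the inverse map. Both composites are the identity on points, and each map is holomorphic, so the two spaces are biholomorphic. Since both spaces are reduced, this suffices. Equivariance follows because both constructions are functorial under remarking and under scaling, and the constant $2i$ commutes with $\C^*$-scaling.

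\emph{Expected main obstacle.} The hardest step will be matching conventions with \cite{CTW25}: their objects, their markings, and the precise form of the universal property they state (Theorem~4.6 and Section~4.8). If their space is only known to classify families in a weaker sense, I would instead compare the Kuranishi charts directly, showing that both are given by the same slice $B\times V_x$ with trivial $H_x$ action at stable points.
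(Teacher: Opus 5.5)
Your treatment of the two actions is essentially the paper's. For the mapping class group you use the universal property of Theorem~\ref{thm:coarse} applied to remarked families. For scaling you work chartwise and descend through $\id_{\C^*}\times q_B$, which is still an analytic Hilbert quotient. Your first argument for openness of the stable locus is acceptable.

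Your alternative ``stable iff finite stabilizer'' criterion needs one more observation. Since $H_x$ is finite and acts trivially on $V_x$, every nearby orbit is closed, so every nearby Higgs bundle is polystable by Proposition~\ref{chtopology:prop:graded-classes}. For non-polystable semistable bundles a finite determinant-one automorphism group does not imply stability.

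The genuine gap is in the comparison with the Collier--Toulisse--Wentworth space.

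\textbf{From $\cMs$ to the CTW space.} You invoke a ``universal/coarse property of the CTW space for holomorphic families'', but no such property is available. CTW construct their space as a quotient of pairs $(J,\Phi)$ on the principal bundle, described through slices. The complex structure on the regularly stable quotient is \emph{defined} by $d\pi(Iv)=I\,d\pi(v)$, where $I(M,\theta)=(JM,J\theta)=(JM,i\theta)$. What must be proved is that your Kuranishi family, written through its smooth trivialization as $s\mapsto(J_s,\Phi_s)$ with $\Phi_s=2i\phi_s(dz+\mu(b(s))\,d\bar z)$, satisfies two Cauchy--Riemann identities:
\begin{itemize}
\item $d_{i\dot s}J=J\,d_{\dot s}J$, which follows from integrability of the total $(0,1)$-operator evaluated on one base and one fiber direction;
\item $d_{i\dot s}\Phi=i\,d_{\dot s}\Phi$, which follows from holomorphic dependence of $\phi_s$ and $\mu(b(s))$ on $s$.
\end{itemize}
Holomorphicity of the composite with $\pi$ then follows from the definition of CTW's complex structure. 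Your fallback, ``compare the Kuranishi charts directly'', does not say how holomorphicity with respect to CTW's structure would be checked, and that is the whole issue.

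\textbf{From the CTW space to $\cM$.} You take for granted that CTW's space carries local holomorphic families over marked curves. Their slices only give pairs $(J_s,\Phi_s)$ on a fixed principal bundle, depending holomorphically on $s$ with values in Sobolev spaces. Before you can apply $\eta$ or Proposition~\ref{prop:nearby-completeness}, you must show these are holomorphic families in the sense of Definition~\ref{defn:family-functors}. Concretely:
\begin{itemize}
\item Show that $\mathcal J(u,\dot s)=(J_su,i\dot s)$ on $P_{\mathrm{SL}}\times S$ is integrable. The mixed Nijenhuis component vanishes precisely because $d_{i\dot s}J=J\,d_{\dot s}J$, and then Newlander--Nirenberg applies.
\item This gives a holomorphic curve family and a holomorphic $\mathrm{SL}_r$-bundle with its determinant trivialization.
\item Check that $\Phi_s/(2i)$ satisfies the total $\bar\partial$-equation, using the fiberwise equation together with $d_{i\dot s}\Phi=i\,d_{\dot s}\Phi$.
\end{itemize}

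\textbf{Matching the loci.} You also need to check that the loci being compared correspond. Stability of principal and associated vector Higgs bundles agrees, and a marked curve automorphism isotopic to the identity is trivial. Hence the stable points of $\cM$ lie in CTW's regularly stable locus, where their complex structure is defined.
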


\subsection*{Outline of the proof}
Section~\ref{sec:tech-chlocal} constructs the relative Kuranishi families and local quotients. Section~\ref{sec:tech-chtopology} identifies their topology using a moment-map flow with estimates uniform over compact subsets of the Teichm\"uller chart. Section~\ref{sec:transitions} glues the models and proves normality and the coarse moduli property. Section~\ref{sec:actions} treats the natural actions and the comparison on the stable locus.

\subsection*{Acknowledgments}
The author thanks Professor Qiongling Li for her support and Professors Kefeng Liu and Huitao Feng for their guidance over the years.

\subsection*{Disclosure of AI-Use}
The author used ChatGPT 5.6 and 6 in reference searching , proofreading and checking the grammar. The author takes the  responsibility for the entire content.

\section{Relative deformation theory and local quotients}\label{sec:tech-chlocal}

We adapt the fixed-curve construction of \cite{Fan2020Construction} to the marked Teichm\"uller family and record the arguments that depend on the curve parameter.

\subsection{Relative equations and harmonic data}\label{chlocal:sec:preliminaries}\label{chlocal:sec:relative}\label{chlocal:sec:hodge}

We use the following stability convention.

\begin{defn}\label{chlocal:defn:stability}
For a nonzero torsion-free coherent sheaf $F$, we set $\operatorname{slope}(F)=\deg(F)/\rank(F)$. A Higgs bundle $(E,\varphi)$ is stable, respectively semistable, if every coherent subsheaf $F\subset E$ with $0<\rank F<\rank E$ and $\varphi(F)\subset F\otimes K_X$ satisfies $\operatorname{slope}(F)<\operatorname{slope}(E)$, respectively $\operatorname{slope}(F)\leq\operatorname{slope}(E)$. It is polystable if it is a direct sum of stable Higgs bundles of the same slope. Here $K_X$ is the canonical bundle and $\operatorname{slope}(E)=0$.
\end{defn}

For a Hermitian metric $h$, let $\nabla_h$ be the Chern connection and $\varphi^{*h}$ the adjoint $(0,1)$-form. The Hitchin--Kobayashi correspondence identifies degree-zero polystable Higgs bundles with solutions of
\begin{equation}\label{chlocal:eq:hitchin}
\bar\partial_{\nabla_h}\varphi=0,\qquad F_{\nabla_h}+[\varphi,\varphi^{*h}]=0.
\end{equation}
The connection $\nabla_h+\varphi+\varphi^{*h}$ is flat with completely reducible monodromy \cite{Hitchin87,Donaldson87,Corlette88,Simpson92}. The normalization of \cite[(3.5), (4.4), (4.6)]{CTW25} is related to ours by $\Phi=2i\varphi$.

We fix $x=(X,E,\varphi)\in\cM$ and $k\geq4$. The marking identifies the underlying surface with $\Sigma$; $E$ also denotes the smooth bundle. We also fix a Kähler form $\omega$ and a harmonic metric $h$ for which the chosen determinant trivialization is unitary. We write
$$
H_x=\{g\in\Aut(E,\varphi):\det g=1\}
$$
and denote the determinant-one complex and $h$-unitary $W^{k+1,2}$ gauge groups by $\mathcal G_{k+1}^{\mathbb C}$ and $\mathcal G_{k+1}$, respectively. Unless stated otherwise, all norms, adjoints, and harmonic operators are defined using $\omega$, $h$, and a fixed smooth reference connection.

Taking the trace of \eqref{chlocal:eq:hitchin} shows that the determinant metric is constant; a constant rescaling gives the stated normalization and the determinant connection $d$.

We next fix the trace-free deformation complex and the graded bracket convention.

\begin{defn}\label{chlocal:defn:deformation-complex}
The trace-free deformation complex is $C_x^\bullet=[\End_0E\xrightarrow{[\varphi,\cdot]}\End_0E\otimes K_X]$, in degrees zero and one. Its Dolbeault resolution is
$$
A^0(\End_0E)\xrightarrow{D_0}A^{0,1}(\End_0E)\oplus A^{1,0}(\End_0E)\xrightarrow{D_1}A^{1,1}(\End_0E),
$$
where $D_0u=(\bar\partial_{\End}u,[\varphi,u])$ and $D_1(a,\psi)=\bar\partial_{\End}\psi+[\varphi,a]$. For forms of total degrees $i,j$, we set $[S,T]=S\wedge T-(-1)^{ij}T\wedge S$ and $V_x=\mathbb H^1(C_x^\bullet)$.
\end{defn}

Its cohomology describes infinitesimal automorphisms, deformations, and obstructions \cite[Theorems~3.1, 3.6]{Ono2022Deformations}.

Deformations of the marked curve are represented by harmonic Beltrami differentials. We denote the space of harmonic representatives of $H^{0,1}(X,T_X)$ by $\mathcal B\subset A^{0,1}(X,T_X)$.

\begin{lem}\label{chlocal:lem:marked-base}
Let $B\subset\mathcal B$ be a sufficiently small open ball centered at the origin, chosen so that $\mu(b)=b$ and $\|\mu(b)\|_\infty<1$ for every $b\in B$. Then $B$ is a Teichm\"uller chart and carries the universal marked curve family over analytic bases. In a holomorphic coordinate $z$ on the central fiber, the vertical $(0,1)$-direction is spanned by $\partial_{\bar z}-\mu(b)\partial_z$; the base directions are $\partial_{\bar b_j}$. After identifying the marked curve families, isomorphisms of the Higgs families are represented by bundle isomorphisms covering the identity on the curve family.
\end{lem}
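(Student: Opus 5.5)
The lemma rests on classical Teichmüller theory: the Ahlfors--Bers description of Teichmüller space by harmonic Beltrami differentials. I would first recall Ahlfors' theorem. For $X$ of genus $g\geq2$ with its hyperbolic metric, the map sending $b\in\mathcal B$ with $\|b\|_\infty<1$ to the marked curve $X_b$ is a local homeomorphism onto a neighborhood of $X$ in $\cT_g$. Here $X_b$ is the smooth surface $\Sigma$ with the complex structure whose $(0,1)$-vectors are spanned by $\partial_{\bar z}-b\,\partial_z$. Ahlfors showed that this map is holomorphic with bijective differential, so its image is a Bers-type coordinate chart. Taking $\mu(b)=b$ is then just the statement that we use the harmonic Beltrami differential itself as the complex structure, with no correction term. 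Smallness of $B$ guarantees $\|b\|_\infty<1$, since $\mathcal B$ is finite-dimensional and all norms on it are equivalent. Injectivity near $0$ follows from the inverse function theorem.

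Next I would construct the total space. On $B\times\Sigma$ I would define the almost complex structure whose $(0,1)$-distribution is spanned by $\partial_{\bar z}-b\,\partial_z$ together with the $\partial_{\bar b_j}$. Because $b$ depends linearly, hence holomorphically, on the coordinates $b_j$, this distribution is involutive. The bracket of $\partial_{\bar b_j}$ with $\partial_{\bar z}-b\,\partial_z$ is $-(\partial_{\bar b_j}b)\partial_z=0$. The bracket of two vertical fields is trivial since there is one vertical direction. Newlander--Nirenberg then gives a complex manifold $\mathcal X\to B$, with holomorphic projection, whose fibers are $X_b$. The fixed identification with $\Sigma$ gives the marking.

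Universality over analytic bases would then follow from the fact that $\cT_g$ carries a universal marked family: the Teichmüller curve, marked curves having no nontrivial marking-preserving automorphisms for $g\geq2$. This family restricted to the chart is isomorphic to $\mathcal X$, because both are marked families inducing the identity classifying map on $B$, and isomorphisms of marked curves are unique. For an arbitrary reduced analytic base $S$ mapping to $B$, I would pull back, again using uniqueness of marked isomorphisms so that the local identifications glue.

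Finally, given two Higgs families over $S$ whose marked curve families are identified with the pullback of $\mathcal X$, an isomorphism of Higgs families covers an isomorphism of marked curve families. By the rigidity above, that isomorphism is the identity after the identifications, so the bundle isomorphism covers the identity.

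The main obstacle I expect is the holomorphic dependence and rigidity over non-smooth, possibly non-reduced-looking analytic bases, that is, checking that uniqueness of marked isomorphisms holds in families and not only pointwise. I would handle this by citing the representability of marked Teichmüller families (Grothendieck/Earle--Eells) rather than reproving it, and restrict to reduced bases as the paper does.
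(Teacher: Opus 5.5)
Your proposal is correct and follows essentially the same route as the paper. The paper's proof only cites the Beltrami parametrization and Grothendieck's local universal property of the marked Teichm\"uller family, using $H^0(X,T_X)=0$; you instead use the absence of nontrivial automorphisms isotopic to the identity, and both are ingredients of Grothendieck's representability. You also spell out details the paper leaves to citations: the Newlander--Nirenberg construction over $B\times\Sigma$ with the involutivity check $\partial_{\bar b_j}b=0$, and the rigidity argument showing that isomorphisms cover the identity. One point needs care: if $\mathcal B$ consists of harmonic representatives for the fixed K\"ahler form rather than the hyperbolic metric, the chart property should come from the Kodaira--Spencer map $\mathcal B\to H^{0,1}(X,T_X)$ being an isomorphism, not from Ahlfors' theorem verbatim. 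Your construction already gives this.
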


The assertions concerning $B$ follow from the Beltrami parametrization and the local universal property of Teichm\"uller space for marked curve families, using $H^0(X,T_X)=0$ for $g\geq2$; see \cite[Theorem~3.1, Remark~3.2]{Grothendieck1961Teich} and \cite[Theorem~1.12]{FarbMargalitPrimer} for the marking convention.

In these coordinates, the Higgs equation and gauge action take the following form.

\begin{prop}\label{chlocal:prop:relative-equations}
Relative Higgs pairs are represented by coordinates $(b,\alpha)$, where
$$
\alpha=(a,\psi),\qquad
(a,\psi)\in W^{k,2}\bigl(A^{0,1}(\End_0E)\oplus A^{1,0}(\End_0E)\bigr).
$$
We put $\varphi_{\mathrm{tot}}=\varphi+\psi$. The relative Higgs field is $\varphi_{\mathrm{tot}}+\iota_{\mu(b)}\varphi_{\mathrm{tot}}$, and its holomorphicity equation is
\begin{equation}\label{chlocal:eq:relative-equation}
F(b,\alpha)=\bar\partial_{\End}\varphi_{\mathrm{tot}}+\partial_h(\iota_{\mu(b)}\varphi_{\mathrm{tot}})+[a,\varphi_{\mathrm{tot}}]=0.
\end{equation}
The complex gauge group acts holomorphically on the right by
\begin{equation}\label{chlocal:eq:right-gauge}
a^g=g^{-1}ag+g^{-1}(\bar\partial_{\End}g-\iota_{\mu(b)}\partial_hg),\qquad
\varphi_{\mathrm{tot}}^g=g^{-1}\varphi_{\mathrm{tot}}g.
\end{equation}
This action satisfies
$$
F(b,\alpha^g)=g^{-1}F(b,\alpha)g.
$$
Since it is a right action, for $g,\sigma\in\mathcal G_{k+1}^{\mathbb C}$ one has
\begin{equation}\label{chlocal:eq:right-action-composition}
(\alpha^g)^\sigma=\alpha^{g\sigma}.
\end{equation}
\end{prop}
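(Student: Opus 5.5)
The plan is to write the complex structure of the fiber over $b\in B$ explicitly, transport everything back to the fixed smooth bundle $E$ over the central fiber, and then obtain each formula by direct computation. By Lemma~\ref{chlocal:lem:marked-base}, the vertical $(0,1)$-direction on the fiber over $b$ is spanned by $\partial_{\bar z}-\mu(b)\partial_z$. A $(0,1)$-form on the fiber $X_b$ is therefore the restriction of a central-fiber form, and a $(1,0)$-form on $X_b$ has the form $\psi+\iota_{\mu(b)}\psi$ with $\psi$ of type $(1,0)$ on $X$, since $dz+\mu(b)d\bar z$ annihilates the new $(0,1)$-direction.

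\textbf{Step 1: the bundle operator and the Higgs field.} A holomorphic structure on $E$ over $X_b$ is a $\bar\partial$-operator differentiating only along $\partial_{\bar z}-\mu(b)\partial_z$. Starting from the reference pair $(\bar\partial_{\End},\partial_h)$, I write it as
$$
\bar\partial_b=\bar\partial-\iota_{\mu(b)}\partial_h+a,
$$
so a connection form $a\in A^{0,1}(\End_0E)$ on $X$ parametrizes the structure. The Higgs field on $X_b$ is $\varphi_{\mathrm{tot}}+\iota_{\mu(b)}\varphi_{\mathrm{tot}}$, as explained above. Since everything is trace-free and determinant-one, the determinant trivialization is automatically preserved. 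Sobolev regularity is preserved because $\mu(b)$ is smooth and $W^{k,2}$ is an algebra for $k\geq4$.

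\textbf{Step 2: the holomorphicity equation.} The condition $\bar\partial_b(\varphi_{\mathrm{tot}}+\iota_\mu\varphi_{\mathrm{tot}})=0$ is a $(1,1)$-form condition. To obtain $F(b,\alpha)$, I expand it in $z$, using $d\bar z\wedge(dz+\mu\,d\bar z)=d\bar z\wedge dz$, and compute the coefficient of $d\bar z\wedge dz$. The terms are:
\begin{itemize}
\item[(i)] $\bar\partial_{\End}\varphi_{\mathrm{tot}}$, from $\partial_{\bar z}$ applied to the $dz$-coefficient;
\item[(ii)] the $-\mu\partial_z$ term combining with the $d\bar z$-component $\mu\varphi_{\mathrm{tot}}$, which after integration by parts assembles into $\partial_h(\iota_\mu\varphi_{\mathrm{tot}})$;
\item[(iii)] $[a,\varphi_{\mathrm{tot}}]$, from the bracket with $a$.
\end{itemize}
I expect an honest bookkeeping check here, and I would verify it in local frames: the equation is $D_1$-linear plus the quadratic term $[a,\psi]$, and the $\mu$-correction must vanish when $\mu=0$.

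\textbf{Step 3: the gauge action.} Define $a^g$ by the conjugation $g^{-1}\circ\bar\partial_b\circ g=\bar\partial-\iota_\mu\partial_h+a^g$. Expanding $g^{-1}\bar\partial_b(gs)$ gives
$$
a^g=g^{-1}ag+g^{-1}\bigl(\bar\partial_{\End}g-\iota_{\mu}\partial_hg\bigr),
$$
because the Leibniz rule applied to the vector field $\partial_{\bar z}-\mu\partial_z$ produces exactly $\bar\partial g-\iota_\mu\partial_h g$. The Higgs field transforms by conjugation, and conjugation commutes with $\iota_\mu$, so $\varphi_{\mathrm{tot}}^g=g^{-1}\varphi_{\mathrm{tot}}g$.

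Equivariance of $F$ then follows structurally, with no further computation. Indeed $F(b,\alpha)$ is the coordinate expression of $[\bar\partial_b,\Phi_b]$, where $\Phi_b=\varphi_{\mathrm{tot}}+\iota_\mu\varphi_{\mathrm{tot}}$. Conjugating both operators by $g$ gives $g^{-1}[\bar\partial_b,\Phi_b]g$, so $F(b,\alpha^g)=g^{-1}F(b,\alpha)g$.

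\textbf{Step 4: holomorphy and the composition rule.} Holomorphy of the action follows because the formula is complex-polynomial in $(g,g^{-1},\alpha)$ and their derivatives, and is continuous in $W^{k+1,2}\times W^{k,2}$ by the Sobolev multiplication theorems. The composition rule $(\alpha^g)^\sigma=\alpha^{g\sigma}$ comes from associativity of conjugation:
$$
\sigma^{-1}g^{-1}\bar\partial_b\,g\sigma=(g\sigma)^{-1}\bar\partial_b(g\sigma).
$$
This shows the action is a right action.

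The main obstacle is Step 2. Identifying the $d\bar z\wedge dz$-component, in particular the term $\partial_h(\iota_\mu\varphi_{\mathrm{tot}})$ versus a term with $\mu$ outside the derivative, requires careful sign and type conventions. I would verify it by comparing with the infinitesimal version at $b=0$ against Ono's complex.
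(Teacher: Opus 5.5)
Your plan is correct and is essentially the paper's proof: write the vertical operator $\bar\partial_E-\iota_{\mu(b)}\partial_h+a$ and the Higgs field $\varphi_{\mathrm{tot},z}(dz+\mu(b)\,d\bar z)$ in a local frame, read off the $d\bar z\wedge dz$ coefficient, and obtain the gauge formula, covariance and the right-action rule by conjugation, with Sobolev multiplication and inversion giving holomorphy. The point you flag in Step 2 is settled by the Leibniz rule, not integration by parts: with $\theta=dz+\mu\,d\bar z$ one has $\bar\partial_b(\varphi_{\mathrm{tot},z}\theta)=(\bar\partial_b\varphi_{\mathrm{tot},z})\wedge\theta+\varphi_{\mathrm{tot},z}\,d\theta$ and $d\theta=-\partial_z\mu\,d\bar z\wedge dz$. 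Together with the $-\mu\Gamma_z$ connection term, this gives the paper's coefficient $\partial_{\bar z}\varphi_{\mathrm{tot},z}-\partial_z(\mu\varphi_{\mathrm{tot},z})+[\Gamma_{\bar z}-\mu\Gamma_z+a_{\bar z},\varphi_{\mathrm{tot},z}]$, so $\mu$ does sit inside $\partial_h$.
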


\begin{proof}
We write $\varphi_{\mathrm{tot}}=\varphi_{\mathrm{tot},z}\,dz$. The vertical Dolbeault operator is $L=\bar\partial_E-\iota_{\mu(b)}\partial_h+a$, and the relative Higgs field is $\varphi_{\mathrm{tot},z}(dz+\mu(b)\,d\bar z)$. In the chosen frame, the Chern connection has the form $\nabla_h=d+\Gamma_z\,dz+\Gamma_{\bar z}\,d\bar z$. The holomorphicity equation has coefficient
$$
\partial_{\bar z}\varphi_{\mathrm{tot},z}
-\partial_z\bigl(\mu(b)\varphi_{\mathrm{tot},z}\bigr)
+[\Gamma_{\bar z}-\mu(b)\Gamma_z+a_{\bar z},\varphi_{\mathrm{tot},z}]
$$
in the order $d\bar z\wedge dz$, giving \eqref{chlocal:eq:relative-equation}. Vertical integrability holds because $A^{0,2}(X_b)=0$. Conjugating $L$ and $\varphi_{\mathrm{tot}}$ gives the action and covariance. Sobolev multiplication makes $F\colon B\times W^{k,2}\to W^{k-1,2}$ a continuous polynomial; multiplication and inversion make the gauge action holomorphic \cite[Chapter~13, Sections~1--3]{Taylor2011PDEIII}.
\end{proof}

The linearized equation and gauge action are
\begin{equation}\label{chlocal:eq:reduced-differential}
u\longmapsto(D_0u,0),\qquad
(\alpha,\dot b)\longmapsto D_1\alpha+M(\dot b),\qquad
M(\dot b)=\partial_h(\iota_{\dot b}\varphi).
\end{equation}
Fixing the marking reduces Ono's joint deformation complex to \eqref{chlocal:eq:reduced-differential} \cite[Theorems~3.1, 3.6]{Ono2022Deformations}. Hodge decomposition and $H^0(X,T_X)=0$ make the quotient
$$
A^0(T_X)\xrightarrow{\bar\partial}A^{0,1}(T_X)/\mathcal B
$$
acyclic. Moreover, $A^{0,2}(T_X)=0$ on a Riemann surface, so the Kodaira--Spencer equation is unobstructed. Thus $B$ is a local universal deformation space for the marked curve, while $H^2$ records the obstruction from the Higgs complex.

For the topological comparison, we also use the corresponding space of unitary connection--Higgs pairs over $B$.

\begin{defn}\label{chlocal:defn:unitary-pairs}
For a smooth family $J_b$ of complex structures, let $\operatorname{Conn}_k(E,h)$ be the affine space of $W^{k,2}$ unitary connections inducing $d$ on $\det E$. We set
$$
\mathcal C_{B,k}=\{(b,\nabla,\varphi):b\in B,\ \nabla\in\operatorname{Conn}_k(E,h),\ \varphi\in W^{k,2}(T^{*1,0}_{J_b}\Sigma\otimes\End_0E)\},
$$
and endow it with the subspace topology induced from
$$
B\times\operatorname{Conn}_k(E,h)\times
W^{k,2}(T^*\Sigma\otimes_{\mathbb R}\mathbb C\otimes\End_0E).
$$
\end{defn}

Properness of the unitary gauge action implies that the unitary quotient is Hausdorff.

\begin{prop}\label{chlocal:prop:unitary-properness}
The unitary gauge action on $\mathcal C_{B,k}$ is continuous and proper. Every invariant subset, with its induced topology, has an open quotient map and Hausdorff quotient. Smooth reference metrics give uniformly equivalent norms over relatively compact subsets of $B$.
\end{prop}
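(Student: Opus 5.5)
The plan is to follow the standard Uhlenbeck-type argument: the unitary gauge group $\mathcal G_{k+1}$ is a Hilbert Lie group acting on the affine space $\mathcal C_{B,k}$ by $(b,\nabla,\varphi)\cdot g=(b,g^{-1}\nabla g,g^{-1}\varphi g)$. The action is trivial on the $B$-factor. Because the fibre condition $\varphi\in W^{k,2}(T^{*1,0}_{J_b}\otimes\End_0E)$ is preserved by pointwise conjugation, the action preserves $\mathcal C_{B,k}$.

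\textbf{Continuity.} This is immediate from the Sobolev multiplication theorem, since $k\geq4$ gives $W^{k+1,2}\subset C^2$ in real dimension two. The map $g\mapsto g^{-1}$ is smooth on $\mathcal G_{k+1}$, and $g^{-1}dg\in W^{k,2}$. The $B$-dependence enters only through the subspace topology, which is inherited from a product on which the action is jointly continuous.

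\textbf{Properness.} I take sequences $x_n\to x$ and $x_n\cdot g_n\to y$ in $\mathcal C_{B,k}$ and show that $(g_n)$ has a convergent subsequence whose limit $g$ satisfies $x\cdot g=y$. The connection parts give
$$
dg_n=g_n\nabla'_n-\nabla_n g_n,
$$
where $\nabla_n\to\nabla$ and $\nabla'_n\to\nabla'$ in $W^{k,2}$. Since $g_n$ is unitary, $|g_n|$ is bounded pointwise, so $g_n$ is bounded in $L^\infty$. Bootstrapping through this identity gives successively $dg_n$ bounded in $L^p$, then $g_n$ bounded in $W^{1,p}$, and eventually $g_n$ bounded in $W^{k+1,2}$. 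Rellich compactness then yields a subsequence converging in $W^{k,2}$ to some unitary $g$. Passing to the limit in the identity shows $dg\in W^{k,2}$, hence $g\in W^{k+1,2}$ with $\det g=1$. Running the identity once more for the differences $g_n-g$ upgrades the convergence to $W^{k+1,2}$. The Higgs fields play no role here: they converge automatically once $g_n$ converges in $C^0$.

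\textbf{Quotient statements.} These are formal consequences of properness. For a group action, the quotient map is always open, since the saturation of an open set $U$ is $\bigcup_g U\cdot g$. This remains true for an invariant subset $\mathcal S$ with the subspace topology, because the saturation of $U\cap\mathcal S$ in $\mathcal S$ equals $(U\cdot\mathcal G)\cap\mathcal S$. Properness implies that the orbit relation
$$
R=\{(x,x\cdot g)\}
$$
is closed in $\mathcal C_{B,k}\times\mathcal C_{B,k}$. Hence $R\cap(\mathcal S\times\mathcal S)$ is closed, and an open quotient map with closed relation has Hausdorff quotient. This uses metrizability, or at least first countability, of $\mathcal C_{B,k}$, which holds because it is a subspace of a Hilbert space product.

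\textbf{Uniform equivalence of norms.} If the Sobolev norms are defined by metrics $\omega_b$ and connections varying smoothly in $b$, then on $\overline{B'}\Subset B$ the coefficients and their derivatives up to order $k+1$ are uniformly bounded. Ellipticity constants are also uniform because $\|\mu(b)\|_\infty<1$ strictly on compacts. The equivalence constants are therefore bounded by compactness.

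The main obstacle is the bootstrap in the properness step: at each stage one must check that the Sobolev products land in the right spaces, particularly the first step from $L^\infty$ to $W^{1,2}$ and the final step to $W^{k+1,2}$. I would handle this as in Uhlenbeck/Wehrheim, using $W^{k,2}\cdot W^{j,2}\subset W^{j,2}$ for $j\leq k$ and $k\geq2$.
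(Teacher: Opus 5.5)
Your proposal is correct and follows the same structure as the paper's proof. It argues properness on the connection factor, notes that the gauges fix $b$ and the Higgs factor plays no role, derives the quotient assertions from properness and the openness of quotient maps by group actions on invariant subsets, and gets norm equivalence from compactness. The only difference is that you prove properness directly by the bootstrap on $dg_n=g_n\nabla'_n-\nabla_ng_n$ and check $\det g=1$ in the limit, whereas the paper cites \cite[Section~3.1]{Fan2020Construction} for properness and then restricts to the closed determinant-one subgroup.
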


\begin{proof}
The gauge action on the connection factor is proper \cite[Section~3.1]{Fan2020Construction}. The gauges fix $b$, and the determinant-one subgroup is closed. Restrictions to invariant subspaces remain proper; the quotient assertions follow from properness and openness of group actions. Norm equivalence follows from compactness.
\end{proof}

The metric $h$ assigns a unique unitary connection to each Dolbeault operator. Together with the map $\varphi_{\mathrm{tot}}\mapsto\varphi_{\mathrm{tot}}+\iota_{\mu(b)}\varphi_{\mathrm{tot}}$, this assignment gives an equivariant identification with the coordinates $(b,\alpha)$. The coordinate changes depend smoothly on $b$ and are bounded at each Sobolev order.

We write $D''=\bar\partial_{\End}+[\varphi,\cdot]$, $D'=\partial_h+[\varphi^{*h},\cdot]$, and $D=D'+D''$. On the trace-free complex, we set
$$
\Delta_i=D_{i-1}D_{i-1}^*+D_i^*D_i,\qquad H^i=\ker\Delta_i,
$$
with absent arrows omitted. We denote the harmonic projections by $\mathsf H_i$ and the Green operators by $G_i$; they satisfy $\Delta_iG_i=1-\mathsf H_i$ and $G_iH^i=0$.

The relevant harmonic-bundle identities are
$$
(D')^*=i[\Lambda,D''],\qquad (D'')^*=-i[\Lambda,D'],\qquad \Delta'=\Delta''=\tfrac12\Delta_D,
$$
where $\Lambda$ denotes contraction with $\omega$. We also use the usual Hodge decomposition and the estimates
$$
G_i\colon W^{s,2}\longrightarrow W^{s+2,2};
$$
see \cite[Section~4.2, Lemma~4.1]{Ono2023Structure}. All these operators preserve the trace-free summand $\End_0E$.

The stabilizer preserves the harmonic data used in the reduction.

\begin{lem}\label{chlocal:lem:harmonic-equivariance}
Every holomorphic Higgs endomorphism is parallel with respect to $\nabla_h$ and commutes with $\varphi^{*h}$. Its adjoint is also a holomorphic Higgs endomorphism. Conjugation by $H_x$ commutes with the differentials, adjoints, harmonic projections, and Green operators, and fixes $M(\mathcal B)$.
\end{lem}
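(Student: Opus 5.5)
The plan rests on one analytic fact: a Weitzenböck/Bochner argument for the Higgs Laplacian on $\End E$, combined with the harmonic-bundle Kähler identities recorded above. Let $g\in\Aut(E,\varphi)$ be a holomorphic Higgs endomorphism, so that $\bar\partial_{\End}g=0$ and $[\varphi,g]=0$, i.e.\ $D''g=0$. Since $h$ is harmonic, the induced metric on $\End E$ is also harmonic, and the identity $\Delta'=\Delta''=\tfrac12\Delta_D$ holds on $A^0(\End E)$ as well as on the trace-free part. Hence $D''g=0$ gives $\Delta''g=0$, so $\Delta' g=0$, and integrating against $g$ on the compact surface gives $D'g=0$. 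Therefore $Dg=0$. Separating types, $D'g=\partial_hg+[\varphi^{*h},g]$ has components of type $(1,0)$ and $(0,1)$, and $D''g=\bar\partial g+[\varphi,g]$ likewise. It follows that $\partial_hg=0$, $\bar\partial g=0$, $[\varphi,g]=0$ and $[\varphi^{*h},g]=0$. So $\nabla_hg=0$ and $g$ commutes with $\varphi^{*h}$. This is the only step with real content, and the point to check is that the Kähler identity $\Delta'=\Delta''$ is valid on $\End E$ and not only on $\End_0E$. It is, because the flat connection $D$ on $\End E$ is harmonic with induced metric.

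For the adjoint, take $h$-adjoints of the relations just obtained. Since $\nabla_h$ is unitary, $\nabla_hg^{*h}=(\nabla_hg)^{*h}=0$, so $\bar\partial g^{*h}=0$. Taking adjoints of $[\varphi^{*h},g]=0$ gives $[\varphi,g^{*h}]=0$ up to sign. Hence $g^{*h}$ is again a holomorphic Higgs endomorphism. It has determinant one when $g$ does, because $\det g^{*h}=\overline{\det g}$ with respect to the unitary trivialization. Therefore $H_x$ is closed under $*h$.

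Equivariance is next. For $g\in H_x$, conjugation $c_g(u)=g^{-1}ug$ preserves $\End_0E$. Because $g$ is $\nabla_h$-parallel and commutes with $\varphi$ and $\varphi^{*h}$, $c_g$ commutes with $\bar\partial_{\End}$, $\partial_h$, $[\varphi,\cdot]$ and $[\varphi^{*h},\cdot]$, and so with $D_0$, $D_1$, $D'$ and $D''$. The formal adjoint of $c_g$ with respect to the $L^2$ inner product $\int\tr(u\,v^{*h})\,\omega$ is $c_{g^{*h}}^{-1}$ up to the obvious rewriting, namely $v\mapsto g^{*h}{}^{-1}\!\ldots$. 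For $g\in H_x$ the element $g^{*h}$ is again a parallel Higgs endomorphism, so these operators also commute with the differentials. Taking adjoints of $c_gD_i=D_ic_g$ therefore gives $D_i^*c_g=c_gD_i^*$ directly. Consequently $c_g$ commutes with $\Delta_i$ and preserves $H^i=\ker\Delta_i$. It also preserves the $\Delta_i$-invariant decomposition into $\ker\Delta_i$ and $\operatorname{im}\Delta_i$, which is the Hodge decomposition, even though $c_g$ is not unitary. Hence it commutes with $\mathsf H_i$, and with $G_i$, since $G_i$ is the inverse of $\Delta_i$ on the image and zero on the kernel.

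Finally, $M(\dot b)=\partial_h(\iota_{\dot b}\varphi)$. Here $c_g$ acts only on the $\End E$ factor and commutes with contraction by $\dot b$. Since $c_g\varphi=\varphi$ and $c_g$ commutes with $\partial_h$, we get $c_gM(\dot b)=M(\dot b)$ for every $\dot b\in\mathcal B$, so $c_g$ fixes $M(\mathcal B)$ pointwise.
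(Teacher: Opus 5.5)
Your proof is correct and follows the paper's own argument: the degree-zero identity $\Delta'=\Delta''$ forces $D'f=0$, type separation gives parallelism and commutation with $\varphi^{*h}$, closure of $H_x$ under $*h$ lets you take Hilbert adjoints of the commutation relations, and parallelism together with $[g,\varphi]=0$ fixes $M(\dot b)$. One harmless slip is that the $L^2$-adjoint of $c_g$ is $c_{g^{*h}}\colon v\mapsto (g^{*h})^{-1}vg^{*h}$, which matches your explicit formula, rather than $c_{g^{*h}}^{-1}$; either way it is conjugation by an element of $H_x$, so the argument is unaffected.
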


\begin{proof}
If $D''f=0$, equality of degree-zero Laplacians gives $D'f=0$; separating types proves the first assertions, also for $f^{*h}$. Conjugation by $g\in H_x$ commutes with $D'$ and $D''$. Since $g^{*h}\in H_x$, taking Hilbert adjoints gives commutation with the adjoints and hence with the harmonic projections and Green operators. Parallelism and $[g,\varphi]=0$ also give
$$
g^{-1}M(\dot b)g
=g^{-1}\partial_h(\iota_{\dot b}\varphi)g
=\partial_h(\iota_{\dot b}\varphi)=M(\dot b).
$$
\end{proof}

We record the Hodge-theoretic inverse used below.

\begin{lem}\label{chlocal:lem:hodge-inverse}
On $(\im D_0\oplus\im D_1^*)\cap W^{k,2}$, the bounded inverse of $(D_0^*,D_1)$ is
$$
(f,z)\longmapsto D_0G_0f+D_1^*G_2z,
$$
for $f,z\in W^{k-1,2}$ orthogonal to $H^0,H^2$, respectively. Moreover $\mathsf H_2M=0$. The cohomology of \eqref{chlocal:eq:reduced-differential} is $H^0$, $H^1\oplus\mathcal B$, $H^2$, with equivariant degree-one representatives
$$
(v-D_1^*G_2M(\dot b),\dot b),\qquad v\in H^1,\quad\dot b\in\mathcal B.
$$
\end{lem}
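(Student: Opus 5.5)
The plan is to establish the three assertions of Lemma~\ref{chlocal:lem:hodge-inverse} in order: the inverse formula, the vanishing $\mathsf H_2M=0$, and the cohomology computation with its equivariant representatives. All three follow from the Hodge theory just recorded: $\Delta_iG_i=1-\mathsf H_i$, $G_iH^i=0$, the commutation of $G_i$ with $D_i$ and $D_i^*$, and the Sobolev mapping properties of $G_i$.

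\textbf{The inverse formula.} First I would note that $D_1D_0=0$, since the complex is a resolution of $C^\bullet_x$. Hence $D_0^*D_1^*=0$, and both $D_0G_0f$ and $D_1^*G_2z$ are degree-one forms. Set $\alpha=D_0G_0f+D_1^*G_2z$. Then
$$D_0^*\alpha=D_0^*D_0G_0f=\Delta_0G_0f=f-\mathsf H_0f=f,$$
because $f\perp H^0$. Similarly $D_1\alpha=D_1D_1^*G_2z=\Delta_2G_2z=z$ for $z\perp H^2$. This gives a right inverse.

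For injectivity on $\im D_0\oplus\im D_1^*$, suppose $\alpha$ lies in this space and $D_0^*\alpha=0=D_1\alpha$. Then $\alpha$ is harmonic, so it lies in $H^1$. But $H^1$ is orthogonal to $\im D_0\oplus\im D_1^*$, so $\alpha=0$. Boundedness follows from $G_i\colon W^{k-1,2}\to W^{k+1,2}$.

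\textbf{The vanishing $\mathsf H_2M=0$.} I would show that $M(\dot b)=\partial_h(\iota_{\dot b}\varphi)$ is orthogonal to $H^2$. Take $\eta\in H^2$, so $D_1^*\eta=0$. By the Kähler identities, $\ker D_1^*$ in top degree coincides with $\ker (D')^*$ and $\ker (D'')^*$. In particular $\eta$ is annihilated by $\partial_h^*$, using the type decomposition of $(D')^*=i[\Lambda,D'']$.

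Then
$$\langle \partial_h(\iota_{\dot b}\varphi),\eta\rangle=\langle\iota_{\dot b}\varphi,\partial_h^*\eta\rangle=0.$$
Here $\iota_{\dot b}\varphi$ is a $(0,1)$-form, so only the $(0,1)$-component of the adjoint enters, and that component comes from $(D')^*$.

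I expect this step to be the \textbf{main obstacle}: separating the types carefully to see that $\partial_h^*\eta=0$ follows from harmonicity. Concretely, $\Delta'\eta=0$ gives $D'^*\eta=0$, and its $(0,1)$-part is $\partial_h^*\eta$ up to sign.

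\textbf{The cohomology.} The complex is $A^0\to A^1\oplus\mathcal B\to A^2$, with second map $(\alpha,\dot b)\mapsto D_1\alpha+M(\dot b)$.
\begin{itemize}
\item[] In degree zero the cohomology is $\ker D_0=H^0$.
\item[] In degree two, the image is $\im D_1$ plus $M(\mathcal B)$. Since $\mathsf H_2M=0$, we have $M(\mathcal B)\subset\im D_1$, so the cokernel is $H^2$.
\item[] In degree one, the map $\dot b\mapsto -D_1^*G_2M(\dot b)$ solves $D_1(\cdot)=-M(\dot b)$, using $\Delta_2G_2=1-\mathsf H_2$ and $\mathsf H_2M=0$. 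Hence the pairs $(v-D_1^*G_2M(\dot b),\dot b)$ are cocycles.
\end{itemize}

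To see these representatives give all of degree-one cohomology, I subtract such a pair from an arbitrary cocycle. The result is a $D_1$-cocycle with $\dot b=0$. Its class is then represented by its harmonic part in $H^1$ modulo $\im D_0$. The representatives are unique because their $D_0^*$- and $D_1$-data vanish, so they are distinct modulo $\im D_0$.

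\textbf{Equivariance.} This follows from Lemma~\ref{chlocal:lem:harmonic-equivariance}: conjugation by $H_x$ commutes with $D_1^*$ and $G_2$, and it fixes $M(\dot b)$.
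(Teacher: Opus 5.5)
Your proposal is correct and follows the paper's proof: you verify the inverse by applying $(D_0^*,D_1)$ using $D_1D_0=0$, $D_0^*D_1^*=0$, $\Delta_0=D_0^*D_0$ and $\Delta_2=D_1D_1^*$. Your adjoint argument (for harmonic $\eta$, the $(0,1)$-part of $(D')^*\eta$ is $\partial_h^*\eta$) is the dual of the paper's observation that $M(\dot b)=D'(\iota_{\dot b}\varphi)$ on a curve, which gives $\mathsf H_2M=0$. One wording fix in the uniqueness step: a coboundary has zero $\mathcal B$-component, so first force $\dot b=0$; only then do the $D_1$-data of the representative vanish.
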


\begin{proof}
The inverse and closed ranges follow from Hodge decomposition. Indeed,
$$
D_1D_0=0,\qquad D_0^*D_1^*=0,\qquad
\Delta_0=D_0^*D_0,\qquad \Delta_2=D_1D_1^*,
$$
so applying $(D_0^*,D_1)$ to the displayed formula gives $(f,z)$. On a curve,
$$
M(\dot b)=D'(\iota_{\dot b}\varphi),
$$
so $\mathsf H_2M=0$. Thus $D_1D_1^*G_2M=M$ and $D_0^*D_1^*G_2M=0$, giving the asserted cohomology and representatives.
\end{proof}

The elliptic estimates are uniform for $b$ in compact subsets of the Teichm\"uller chart.

\begin{lem}\label{chlocal:lem:uniform-estimates}
For $\alpha=(a,\psi)$, we define
$$
\mathcal D_b\alpha=(D_0^*\alpha,D_1\alpha+\partial_h(\iota_{\mu(b)}\psi)).
$$
For each integer $s\geq0$,
$$
\|\alpha\|_{s+1,2}\leq C_s(\|\mathcal D_b\alpha\|_{s,2}+\|\alpha\|_{0,2}),
$$
uniformly on relatively compact subcharts. The analogous estimate holds for $u\mapsto(\bar\partial_{\End}u-\iota_{\mu(b)}\partial_hu,[\varphi,u])$. A Coulomb representative with $F(b,\alpha)\in H^2$ is smooth.
\end{lem}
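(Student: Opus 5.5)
The plan is to treat $\mathcal D_b$ as a small, lower-order perturbation of the fixed elliptic operator $\mathcal D_0=(D_0^*,D_1)$ on $A^{0,1}(\End_0E)\oplus A^{1,0}(\End_0E)$. The estimate should then follow from the standard Gårding-type estimate for $\mathcal D_0$ together with continuity in $b$.

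\textbf{Step 1 (estimate at $b=0$).} For $b=0$, $\mathcal D_0$ is the rolled-up operator of the elliptic complex of Definition~\ref{chlocal:defn:deformation-complex}. Its principal symbol is that of $\bar\partial^*\oplus\bar\partial$ acting on $(a,\psi)$, which is injective. Hence the elliptic estimate
$$
\|\alpha\|_{s+1,2}\leq C(\|\mathcal D_0\alpha\|_{s,2}+\|\alpha\|_{0,2})
$$
holds for every $s$. This can also be read off from the Green operators $G_i\colon W^{s,2}\to W^{s+2,2}$ recorded before Lemma~\ref{chlocal:lem:hodge-inverse}.

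\textbf{Step 2 (the perturbation).} The difference is $\mathcal D_b-\mathcal D_0=(0,\partial_h(\iota_{\mu(b)}\psi))$. This is a first-order operator whose coefficients are $\mu(b)$ and its derivatives, and $\mu(b)=b$ is smooth and depends linearly on $b$ in $\mathcal B$.

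\emph{Main obstacle.} The perturbation is first order, so it cannot be absorbed merely by being lower order; it changes the principal symbol. I would handle this in one of two ways:
\begin{itemize}
\item \emph{Ellipticity of $\mathcal D_b$.} The symbol of the $\psi$-component becomes that of $\partial_{\bar z}-\mu\partial_z$. This is injective precisely because $\|\mu(b)\|_\infty<1$, as assumed in Lemma~\ref{chlocal:lem:marked-base}. So $\mathcal D_b$ is elliptic with smoothly varying coefficients. On a relatively compact subchart, $\sup|\mu|\le c<1$ and all $C^m$ norms of $\mu$ are bounded. Since the Gårding constants of an elliptic system depend only on the ellipticity constant and finitely many coefficient norms, the constants $C_s$ are uniform.
\item \emph{Direct absorption (alternative).} One has $\|\partial_h(\iota_{\mu}\psi)\|_{s,2}\le \|\mu\|_\infty\|\psi\|_{s+1,2}+C\|\psi\|_{s,2}$, and the top-order term is absorbed when $\|\mu\|_\infty$ is small. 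Uniformity on general compact subcharts then follows by covering with small neighborhoods and freezing $b$ at centers: apply the frozen estimate at a center $b_0$ and absorb $\mathcal D_b-\mathcal D_{b_0}$, which has small $C^m$ coefficients.
\end{itemize}
In both approaches, the lower-order terms $\|\alpha\|_{s,2}$ are reduced to $\|\alpha\|_{0,2}$ by induction on $s$ together with interpolation, $\|\alpha\|_{s,2}\le\epsilon\|\alpha\|_{s+1,2}+C_\epsilon\|\alpha\|_{0,2}$.

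\textbf{Step 3 (the operator on sections).} The operator $u\mapsto(\bar\partial_{\End}u-\iota_{\mu(b)}\partial_hu,[\varphi,u])$ is handled the same way. Its first component is the $\bar\partial$-operator of $X_b$ twisted by $\End_0E$, which is elliptic on sections, again uniformly because $|\mu|\le c<1$. The second component is zeroth order and is absorbed into the lower-order terms.

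\textbf{Step 4 (smoothness of Coulomb representatives).} Let $\alpha\in W^{k,2}$ be a Coulomb representative, so $D_0^*\alpha=0$, with $F(b,\alpha)\in H^2$; elements of $H^2$ are smooth. By \eqref{chlocal:eq:relative-equation},
$$
\mathcal D_b\alpha=\bigl(0,\ F(b,\alpha)-\bar\partial_{\End}\varphi-\partial_h(\iota_{\mu(b)}\varphi)-[\varphi,a]-[a,\psi]\bigr)
$$
up to linear terms already contained in $D_1$. The quadratic term $[a,\psi]$ lies in $W^{k,2}$ by the Sobolev multiplication property, valid since $k\geq4>1$ in two dimensions. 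Elliptic regularity for $\mathcal D_b$ then gives $\alpha\in W^{k+1,2}$. Bootstrapping this argument gives $\alpha\in W^{s,2}$ for all $s$, so $\alpha$ is smooth by Sobolev embedding.
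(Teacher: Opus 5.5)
Your proposal is correct and follows essentially the paper's argument. Uniform ellipticity of the symbols for $\|\mu(b)\|_\infty<1$ on relatively compact subcharts gives the parameter-dependent estimates, and for a Coulomb representative the identity $\mathcal D_b\alpha=(0,F(b,\alpha)-M(\mu(b))-[a,\psi])$ together with Sobolev multiplication bootstraps smoothness. One small slip: your Step~4 formula should not subtract $[\varphi,a]$, since that term is already part of $D_1\alpha$ and $\bar\partial_{\End}\varphi=0$, but this zeroth-order term does not affect the bootstrap.
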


\begin{proof}
The symbols are elliptic for $\|\mu(b)\|_\infty<1$, uniformly on compact subsets of $B$. The parameter-dependent elliptic estimate gives the stated bound. Since
$$
\mathcal D_b\alpha=(0,F(b,\alpha)-M(\mu(b))-[a,\psi]),
$$
Sobolev multiplication and iteration give smoothness. Continuity in $b$ and the Neumann-series criterion preserve the bounded inverses on the orthogonal complements of the harmonic spaces after restricting $B$.
\end{proof}

\subsection{Relative Kuranishi families}\label{chlocal:sec:families}

We now construct the relative Kuranishi family and its obstruction map by the holomorphic implicit function theorem.

\begin{prop}\label{chlocal:prop:kuranishi-reduction}
On a product $\Omega=B\times U$ of balls centered at the origin, $U\subset H^1$, there are holomorphic maps
$$
w\colon\Omega\longrightarrow\im D_1^*,
\qquad
\kappa\colon\Omega\longrightarrow H^2.
$$
Setting $A(b,v)=v+w(b,v)$, one has
$$
D_0^*A=0,\qquad\mathsf H_1A=v,\qquad F(b,A)=\kappa(b,v),
$$
and
\begin{equation}\label{chlocal:eq:kuranishi-derivative}
A(0,0)=0,\quad dA_0(\dot b,\dot v)=\dot v-D_1^*G_2M(\dot b),\quad \kappa(0)=d\kappa_0=0.
\end{equation}
There are an open neighborhood $\mathcal V$ of $(0,0)$ in the ambient space of pairs $(b,\alpha)$ and $\varepsilon>0$ such that the equation
$$
D_0^*\bigl(\alpha^{\exp u(b,\alpha)}\bigr)=0
$$
has a unique solution $u(b,\alpha)\in(H^0)^\perp\cap W^{k+1,2}$ with $\|u(b,\alpha)\|_{k+1,2}<\varepsilon$. This solution depends holomorphically on $(b,\alpha)$ and satisfies $u(0,0)=0$. The map
\begin{equation}\label{chlocal:eq:classifying-map}
c_x(b,\alpha)=\left(b,\mathsf H_1\bigl(\alpha^{\exp u(b,\alpha)}\bigr)\right)
\end{equation}
is holomorphic on $\mathcal V$ and takes values in $\Omega$. For $(b,\alpha)\in\mathcal V\cap F^{-1}(0)$, it satisfies
$$
\alpha^{\exp u(b,\alpha)}=A\bigl(c_x(b,\alpha)\bigr),
\qquad \kappa\bigl(c_x(b,\alpha)\bigr)=0.
$$
\end{prop}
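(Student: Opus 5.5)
The plan is the standard Kuranishi construction with the curve parameter $b$ carried along, using two applications of the holomorphic implicit function theorem. Both rest on Lemma~\ref{chlocal:lem:hodge-inverse} and Lemma~\ref{chlocal:lem:uniform-estimates}.

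\emph{Step 1: the family $A$ and the obstruction $\kappa$.} For $(b,v,w)$ with $v\in H^1$ and $w\in\im D_1^*\cap W^{k,2}$, consider
$$
\mathcal E(b,v,w)=(1-\mathsf H_2)F(b,v+w)\in(H^2)^\perp\cap W^{k-1,2}.
\tag*{}
$$
By Proposition~\ref{chlocal:prop:relative-equations}, this map is holomorphic, being a continuous polynomial in $\alpha$ and holomorphic in $b$ through $\mu(b)=b$. Its partial derivative in $w$ at the origin is $D_1|_{\im D_1^*}$. Lemma~\ref{chlocal:lem:hodge-inverse} shows this is an isomorphism onto $(H^2)^\perp$, with inverse $z\mapsto D_1^*G_2z$. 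The holomorphic implicit function theorem then gives a unique small holomorphic $w(b,v)$ with $\mathcal E=0$ on a product of balls $\Omega=B\times U$, after shrinking $B$.

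Set $A=v+w$ and $\kappa=\mathsf H_2F(b,A)$. We have $D_0^*A=0$ because $D_0^*D_1^*=0$ and $D_0^*v=0$, and $\mathsf H_1A=v$ because $\im D_1^*\perp H^1$. Differentiating $\mathcal E=0$ gives $D_1\,dw(\dot b,\dot v)+M(\dot b)=0$, since $D_1\dot v=0$. Hence $dw=-D_1^*G_2M(\dot b)$, and $d\kappa_0=\mathsf H_2(D_1\,dA+M)=0$ because $\mathsf H_2D_1=0$ and $\mathsf H_2M=0$.

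\emph{Step 2: the Coulomb gauge.} Define
$$
\mathcal N(b,\alpha,u)=D_0^*\bigl(\alpha^{\exp u}\bigr)\in(H^0)^\perp\cap W^{k-1,2},
\qquad u\in(H^0)^\perp\cap W^{k+1,2}.
\tag*{}
$$
The target lies in $(H^0)^\perp$ because $\im D_0^*\perp\ker D_0=H^0$. The map is holomorphic by the holomorphic gauge action in Proposition~\ref{chlocal:prop:relative-equations}.

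At $(0,0,0)$, the $u$-derivative is $D_0^*D_0=\Delta_0$, which is invertible from $(H^0)^\perp\cap W^{k+1,2}$ onto $(H^0)^\perp\cap W^{k-1,2}$. The implicit function theorem then gives the neighborhood $\mathcal V$, the radius $\varepsilon$, the uniqueness of $u$, its holomorphic dependence, and $u(0,0)=0$. Consequently $c_x$ is holomorphic. Shrinking $\mathcal V$ makes $\mathsf H_1(\alpha^{\exp u})$ small, so $c_x$ takes values in $\Omega$.

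\emph{Step 3: identification on $F^{-1}(0)$.} Let $F(b,\alpha)=0$ and put $\beta=\alpha^{\exp u(b,\alpha)}$. Then $F(b,\beta)=0$ by gauge covariance, $D_0^*\beta=0$, and $\beta$ is small in $W^{k,2}$ after shrinking $\mathcal V$. Decompose $\beta=\mathsf H_1\beta+D_0f+D_1^*y$. The coclosed condition and $D_0^*D_0=\Delta_0$ injective on $(H^0)^\perp$ kill the middle term, so $\beta-\mathsf H_1\beta\in\im D_1^*$. Moreover $(1-\mathsf H_2)F(b,\beta)=0$.

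Uniqueness in Step 1 then gives $\beta=A(b,\mathsf H_1\beta)=A(c_x(b,\alpha))$, provided $\beta-\mathsf H_1\beta$ lies in the uniqueness ball. It follows that $\kappa(c_x)=\mathsf H_2F(b,\beta)=0$.

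The main obstacle is keeping the neighborhoods compatible. The bounds on $\beta$ must be controlled uniformly in $b$, so that the implicit-function uniqueness radii from Steps 1 and 2 apply simultaneously. For this I would use continuity of the gauge action together with the uniform estimates of Lemma~\ref{chlocal:lem:uniform-estimates} to shrink $\mathcal V$ to fit inside both radii.
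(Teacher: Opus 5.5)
Your proposal is correct and follows essentially the same route as the paper. It uses two holomorphic implicit-function arguments: one for $(1-\mathsf H_2)F(b,v+w)=0$, with linearization $D_1$ inverted by $D_1^*G_2$, and one for the Coulomb condition, with linearization $\Delta_0$ inverted by $G_0$. Then gauge covariance and uniqueness in the first construction give $\alpha^{\exp u}=A(c_x(b,\alpha))$ and $\kappa(c_x)=0$.

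The neighborhood compatibility you flag as the main obstacle needs only continuity of $(b,\alpha)\mapsto\alpha^{\exp u(b,\alpha)}$ in $W^{k,2}$ at the origin, not the uniform estimates of Lemma~\ref{chlocal:lem:uniform-estimates}. The paper also shrinks $\Omega$ and $\mathcal V$ compatibly to record $u(b,A(b,v))=0$; this is used later but is not part of the statement.
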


\begin{proof}
The holomorphic implicit function theorem, applied as in \cite[Section~4]{Ono2022Deformations} to
$$
(1-\mathsf H_2)F(b,v+w)=0,
\qquad w\in D_1^*W^{k+1,2},
$$
gives the map $w$. The derivative in the $w$-direction is $D_1$, with inverse $D_1^*G_2$. We set $\kappa=\mathsf H_2F(b,A)$; differentiation and $\mathsf H_2M=0$ give \eqref{chlocal:eq:kuranishi-derivative}.

Gauge fixing is obtained by solving
$$
D_0^*(\alpha^{\exp u})=0,
\qquad u\in(1-\mathsf H_0)W^{k+1,2}.
$$
At $(b,\alpha,u)=(0,0,0)$, the derivative in the $u$-direction is $\Delta_0$, with inverse $G_0$. The implicit function theorem gives $u$ on an open neighborhood in the ambient space, without imposing $F(b,\alpha)=0$. After shrinking this neighborhood, \eqref{chlocal:eq:classifying-map} takes values in $\Omega$. For $F(b,\alpha)=0$, covariance and uniqueness in the first construction give the two asserted identities. We may shrink $\Omega$ and $\mathcal V$ compatibly so that $(b,A(b,v))\in\mathcal V$ for every $(b,v)\in\Omega$. The Coulomb condition and uniqueness then give $u(b,A(b,v))=0$.
\end{proof}

The same construction gives regularity at every Sobolev order and the following equivariance property.

\begin{lem}
\label{chlocal:lem:kuranishi-regularity}
For every integer $m\geq0$, the map
$$
A\colon\Omega\longrightarrow
W^{m,2}\bigl(A^{0,1}(\End_0E)\oplus A^{1,0}(\End_0E)\bigr)
$$
is holomorphic, and its values are smooth on $\Sigma$. The germs of $A$ and of the gauge-fixing map $u$ are independent of the Sobolev index $k\geq4$. For $\sigma\in H_x$ and $v^\sigma=\sigma^{-1}v\sigma$,
\begin{equation}\label{chlocal:eq:restricted-equivariance}
A(b,v^\sigma)=A(b,v)^\sigma,\qquad
\kappa(b,v^\sigma)=\kappa(b,v)^\sigma
\end{equation}
whenever $(b,v)$ and $(b,v^\sigma)$ lie in $\Omega$.
The map
$$
(\sigma,b,v)\longmapsto(b,v^\sigma)
$$
is holomorphic on the open set
$$
\{(\sigma,b,v)\in H_x\times\Omega:(b,v^\sigma)\in\Omega\}.
$$
\end{lem}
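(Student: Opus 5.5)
The plan is to prove the four assertions in order: regularity, Sobolev independence, equivariance, and holomorphy of the action map. Each one will come from the uniqueness clauses of the implicit function theorem in Proposition~\ref{chlocal:prop:kuranishi-reduction}, together with the uniform elliptic estimates of Lemma~\ref{chlocal:lem:uniform-estimates} and the equivariance of the harmonic data in Lemma~\ref{chlocal:lem:harmonic-equivariance}.

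\textbf{Regularity and index independence.} Fix $m\geq k$ and run the construction of Proposition~\ref{chlocal:prop:kuranishi-reduction} again at Sobolev index $m$. This yields holomorphic maps $w_m,\kappa_m$ on a possibly smaller product $\Omega_m$. Since $W^{m,2}\subset W^{k,2}$, uniqueness in the $W^{k,2}$ implicit function theorem forces $w_m=w$ on $\Omega_m$. Hence the germs agree, and the same argument applied to $u$ shows its germ is independent of $k\geq4$.

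To avoid shrinking $\Omega$ as $m$ grows, I will bootstrap instead. The value $A(b,v)$ is a Coulomb representative with $F(b,A)=\kappa\in H^2$, so it is smooth by Lemma~\ref{chlocal:lem:uniform-estimates}. For holomorphy into $W^{m,2}$, I apply the uniform estimate to differences $A(b,v)-A(b',v')$ and to difference quotients. Writing $\mathcal D_bA=(0,\kappa-M(\mu(b))-[a,\psi])$, with $\kappa$ valued in the finite-dimensional space $H^2$ consisting of smooth forms, each induction step from $W^{s,2}$ to $W^{s+1,2}$ gives local boundedness and continuity. A locally bounded map that is holomorphic into $W^{k,2}$ and continuous into $W^{m,2}$ is then holomorphic into $W^{m,2}$: weak holomorphy plus local boundedness suffices, and the pairing against a dense set of test functionals is holomorphic.

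I expect this step to be the main obstacle. The difficulty is making the bootstrap uniform in $(b,v)$ on all of $\Omega$ rather than on shrinking neighbourhoods. I would handle it by doing the induction on relatively compact subsets of $\Omega$ and using the uniform constants $C_s$.

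\textbf{Equivariance.} Take $\sigma\in H_x$, which is constant in $b$ as a smooth gauge of determinant one. By Lemma~\ref{chlocal:lem:harmonic-equivariance}, conjugation by $\sigma$ commutes with $D_0^*$, $D_1^*$, $G_2$, $\mathsf H_1$ and $\mathsf H_2$, and it preserves $\im D_1^*$. Since $\sigma$ is $\nabla_h$-parallel, the gauge action \eqref{chlocal:eq:right-gauge} reduces to conjugation. Proposition~\ref{chlocal:prop:relative-equations} then gives $F(b,\alpha^\sigma)=\sigma^{-1}F(b,\alpha)\sigma$.

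Now set $w'=w(b,v)^\sigma$. It lies in $\im D_1^*$ and satisfies $(1-\mathsf H_2)F(b,v^\sigma+w')=0$. Provided $w'$ is small enough to lie in the uniqueness neighbourhood, it equals $w(b,v^\sigma)$. Since $H_x$ is a complex reductive group with compact form $K_x$, I would phrase this via uniqueness on the connected component of the solution set through $(b,v^\sigma)$ and analytic continuation along $\Omega$. Concretely, both sides of \eqref{chlocal:eq:restricted-equivariance} are holomorphic in $(\sigma,b,v)$ on the stated open set. They agree near $\sigma=\Id$ by the smallness argument, hence on the identity component by the identity theorem. For the other components, first choose $\Omega$ to be $K_x$-invariant, which is possible because $K_x$ acts unitarily on $H^1$. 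Then every component of $H_x$ meets $K_x$, and for $\sigma\in K_x$ the uniqueness argument applies directly since conjugation is an isometry. Propagation through each component of the (connected-fibre) domain finishes the claim; the relation for $\kappa$ follows by applying $\mathsf H_2$.

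\textbf{Holomorphy of the action map.} The map $(\sigma,b,v)\mapsto(b,\sigma^{-1}v\sigma)$ is the restriction of a polynomial map on the finite-dimensional space $\End E\times\mathcal B\times H^1$. It is therefore holomorphic on the stated open subset.
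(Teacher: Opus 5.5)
Your proposal is correct. For regularity and Sobolev-index independence it follows the paper's argument:
\begin{itemize}
\item bootstrap $\mathcal D_bA=(0,\kappa-M(\mu(b))-[a,\psi])$ with uniform constants on relatively compact subsets of $\Omega$;
\item upgrade to $W^{m,2}$-holomorphy by a Cauchy-integral or weak-holomorphy argument;
\item identify the germs of $A$ and $u$ by uniqueness at the lower index.
\end{itemize}
You obtain $W^{m,2}$-continuity from difference estimates where the paper interpolates, which makes no real difference.

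The genuine divergence is the equivariance step. The paper fixes $\sigma\in H_x$. Conjugation by a fixed $\sigma$ is a bounded operator, so for $(b,v)$ near the origin one has $(b,v^\sigma)\in\Omega$ and $w(b,v)^\sigma$ lies in the uniqueness ball. The domain $\Omega_\sigma=\{(b,v)\in\Omega:(b,v^\sigma)\in\Omega\}$ is convex, so the identity theorem in $(b,v)$ alone extends the equality to all of $\Omega_\sigma$. This handles every component of $H_x$ at once.

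Because you let $\sigma$ vary, you need the following extra inputs:
\begin{itemize}
\item connectedness of the set of $(\sigma,b,v)$ with $\sigma\in H_x^0$;
\item the polar decomposition $H_x=K_x\exp(i\fk_x)$, which the paper only records later, to reach the other components;
\item $K_x$-invariance of $\Omega$;
\item the claim that conjugation by $K_x$ is an isometry of the Sobolev norms. This holds only if those norms are built from $h$ and $\nabla_h$, not from an arbitrary reference connection.
\end{itemize}
There is also a gap in your ``propagation'' step. Agreement on the slice $\{\sigma_0\}\times\Omega$ is not agreement on an open set, so the identity theorem does not apply there. Use the group law instead: for $\sigma=\sigma_0\tau$ with $\sigma_0\in K_x$ and $\tau\in H_x^0$, apply the identity-component case to $(\tau,b,v^{\sigma_0})$.

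With these points made explicit your route works, but it gains nothing over the paper's. The paper's single observation, that smallness near $(0,0)$ holds for each fixed $\sigma$, makes the component analysis unnecessary.

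Finally, for the last assertion you take for granted that $H_x$ is a complex Lie group. The paper justifies this by realizing $H_x$ as the determinant-one units of the finite-dimensional algebra $\mathbb C\Id_E\oplus H^0$, which is a submanifold because the determinant has nonzero derivative in the scalar direction. That algebra is the right ambient space, not $\End E$.
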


\begin{proof}
Writing $A=(a,\psi)$, Lemma~\ref{chlocal:lem:uniform-estimates} and Sobolev multiplication applied to
$$
\mathcal D_bA=(0,\kappa-M(\mu(b))-[a,\psi])
$$
give, recursively,
$$
\sup_{(b,v)\in K}\|A(b,v)\|_{W^{m,2}}<\infty,
\qquad K\Subset\Omega,\quad m\geq k.
$$
Interpolation gives $W^{m,2}$-continuity, and the Banach-valued Cauchy formula gives $W^{m,2}$-holomorphicity. The lower Sobolev orders follow by inclusion. Cauchy estimates on compact subsets of $\Omega$ and Sobolev embedding give joint smoothness.

A gauge transformation $g$ identifying two smooth representatives satisfies
\begin{equation}\label{chlocal:eq:gauge-regularity}
(\bar\partial_{\End}-\iota_{\mu(b)}\partial_h)g=ga'-ag.
\end{equation}
The degree-zero elliptic estimate raises the regularity of $g$ through every Sobolev order. For any two indices $k,\ell\geq4$, the corresponding implicit-function constructions use the same orthogonal complements of the harmonic spaces. After restricting both constructions to a product neighborhood contained in their implicit-function domains, uniqueness at the lower Sobolev order identifies the maps $A$ and $u$ obtained at the two Sobolev indices. Equation~\eqref{chlocal:eq:gauge-regularity} then gives smoothness for every smooth solution in that neighborhood.

By Lemma~\ref{chlocal:lem:harmonic-equivariance}, every $\sigma\in H_x$ is parallel and commutes with $\varphi$. The action in \eqref{chlocal:eq:right-gauge} therefore reduces to conjugation and preserves the equation and the chosen orthogonal complements. For fixed $\sigma$, the uniqueness part of the implicit function theorem proves \eqref{chlocal:eq:restricted-equivariance} on a neighborhood of $(0,0)$. The domain
$$
\Omega_\sigma=\{(b,v)\in\Omega:(b,v^\sigma)\in\Omega\}
$$
is an intersection of convex product domains containing the origin. Both sides are holomorphic there, so the identity theorem extends the equality to $\Omega_\sigma$.

The units of the finite-dimensional algebra $\mathbb C\Id_E\oplus H^0$ form a complex Lie group. Its determinant-one subgroup is a complex Lie group since the determinant has nonzero derivative in the scalar direction. Conjugation therefore gives the holomorphic map asserted in the statement.
\end{proof}

The following lemma gives a criterion for holomorphicity on the total space of the curve family.

\begin{lem}
\label{chlocal:lem:fiberwise-holomorphicity}
Let $T\subset\Delta$ be a reduced analytic subspace of a polydisc, and let the marked curve family over $\Delta$ be a pullback of the Beltrami family in Lemma~\ref{chlocal:lem:marked-base}. Use the induced smooth identification with $\Sigma\times\Delta$. Let $f$ be a locally defined function on its total space. Suppose that, on each relatively compact coordinate disc $W\subset\Sigma$ in its domain, the map $t\mapsto f(\cdot,t)|_{\overline W}$ is holomorphic with values in $C^1(\overline W)$. If $f|_T$ is holomorphic on every fiber, then it is holomorphic on the total space of the curve family over $T$. The same statement applies entrywise to matrices representing bundle homomorphisms in holomorphic frames and satisfying these hypotheses.
\end{lem}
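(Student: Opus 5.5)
The statement is local, so I would work in a fixed chart. Let $W\subset\Sigma$ be a coordinate disc with central-fiber coordinate $z$, and let $\Delta$ have coordinates $t$. The Beltrami family pulled back along $\beta\colon\Delta\to B$ has vertical $(0,1)$-direction $\partial_{\bar z}-\mu(\beta(t))\partial_z$ on $W\times\Delta$. On such a piece of the total space I would build holomorphic coordinates $(\zeta(z,t),t)$. To do this, solve the Beltrami equation $\partial_{\bar z}\zeta=\mu(\beta(t))\partial_z\zeta$ on a slightly smaller disc, normalized so that $\zeta(\cdot,0)=z$. The measurable Riemann mapping theorem with holomorphic parameter dependence (Ahlfors--Bers) gives a solution $\zeta$ that is holomorphic in $t$ with values in $C^{1}$, and in fact smooth jointly since $\mu(b)=b$ is smooth in $z$ and linear in $b$. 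Then $(\zeta,t)$ is a holomorphic chart on the total space over $\Delta$. Its restriction to $T$ is a holomorphic chart of the total space over $T$, by the definition of the reduced pullback family.

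In these coordinates the claim becomes the following. Let $F(\zeta,t)=f(z(\zeta,t),t)$ on a product $D\times(T\cap\Delta')$. Suppose $F$ is holomorphic in $\zeta$ for each fixed $t\in T$. Suppose also that $t\mapsto F(\cdot,t)|_{\overline D}$ extends to a holomorphic $C^0(\overline D)$-valued map on the ambient polydisc. This extension comes from composing the $C^1$-valued holomorphic map of the hypothesis with the holomorphic inverse coordinate change; I would check it via the chain rule and the Banach-valued Cauchy formula. From this I would form the ambient Cauchy integral
$$
\tilde F(\zeta,t)=\frac1{2\pi i}\oint_{\partial D'}\frac{F(\xi,t)}{\xi-\zeta}\,d\xi,
$$
with $D'\Subset D$ and $t$ ranging over the ambient polydisc. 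This function is holomorphic in $\zeta$, and it is holomorphic in $t$ because the $C^0$-valued map is holomorphic. By Hartogs, or more simply by Osgood since it is continuous, $\tilde F$ is jointly holomorphic on $D'\times\Delta'$. For $t\in T$ the Cauchy formula gives $\tilde F(\cdot,t)=F(\cdot,t)$, because $F(\cdot,t)$ is holomorphic. So $F|_T$ is the restriction of an ambient holomorphic function, which is exactly holomorphicity on the reduced space.

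The matrix version follows by applying this entrywise. I would take holomorphic frames of the relevant bundles over the total space, which exist locally. The matrix entries then inherit the $C^1$-valued holomorphic dependence on $t$, once the frames themselves satisfy the same hypothesis, which I would arrange by constructing them from the Kuranishi data.

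The step I expect to be the main obstacle is establishing the holomorphic dependence of the Beltrami solution $\zeta$ on $t$ with values in $C^1$, together with the transfer of the hypothesis on $f$ into the $(\zeta,t)$ coordinates. This transfer requires uniform control of the inverse map $z(\zeta,t)$. I would first try to shrink $\Delta$ so that $\|\mu\|_{C^{1,\alpha}}$ is small and solve by a Neumann series in the Beurling transform. That makes $\zeta$ a convergent power series in $t$ with coefficients in $C^{1,\alpha}$, which gives holomorphicity directly.
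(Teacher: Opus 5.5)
There is a genuine gap in the step where you transfer the hypothesis to the coordinates $(\zeta,t)$. You assert that $F(\xi,t)=f(z(\xi,t),t)$ depends holomorphically on $t$ over the whole ambient polydisc, because it is a composition of the $C^1$-valued holomorphic map with the ``holomorphic inverse coordinate change.'' That is false. The inverse $(\xi,t)\mapsto(z(\xi,t),t)$ is holomorphic only as a map into the total space with its twisted complex structure, and on that space the function $z$ is not holomorphic; in particular $z(\xi,t)$ is not holomorphic in $t$. Differentiate $\zeta(z(\xi,t),t)=\xi$ using $\partial_{\bar t}\zeta=0$ and $\partial_{\bar z}\zeta=\mu\,\partial_z\zeta$. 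This gives $\partial_{\bar t}z=-\mu\,\overline{\partial_t z}$, and therefore
$$
\partial_{\bar t}F=\overline{\partial_t z}\,\bigl(\partial_{\bar z}f-\mu\,\partial_z f\bigr),
$$
with the right side evaluated at $(z(\xi,t),t)$. This vanishes where $f$ is fiberwise holomorphic, i.e.\ over $T$, but not on any neighborhood of $T$ in $\Delta$. Hence your Cauchy integral $\tilde F$ is not holomorphic in $t$ off $T$ and does not give an ambient extension. Already in the local model $\mu=t_1$, $\zeta=z+t_1\bar z$, take $f=t_2\bar z$ and $T=\{t_2=0\}$. All hypotheses hold, yet your formula gives $\tilde F=-t_2\bar t_1\zeta/(1-|t_1|^2)$, which is not holomorphic.

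The missing idea is to keep the contour fixed on $\Sigma$ rather than in the $\zeta$-plane, so that the inverse coordinate change never appears. The paper chooses a fixed smooth closed curve $\Gamma\subset\Sigma$ bounding a coordinate disc. It then shrinks $\Delta$ so that all curves $\zeta(\Gamma,t)$ enclose a common disc, and sets
$$
\widetilde f(\zeta,t)=\frac{1}{2\pi i}\int_\Gamma\frac{f(p,t)\,d_p\zeta(p,t)}{\zeta(p,t)-\zeta}.
$$
Each of $f(p,t)$, $\zeta(p,t)$ and $d_p\zeta(p,t)$ is holomorphic in $t$ for fixed $p$, so $\widetilde f$ is holomorphic on the ambient polydisc. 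For $t\in T$, the one-variable Cauchy formula on the fiber gives $\widetilde f(\cdot,t)=f(\cdot,t)$. Over $T$ your contour can be deformed to this one by Cauchy's theorem, which is why the two formulas agree there; only the fixed-$\Sigma$ contour stays holomorphic off $T$.

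The step you flagged as the main obstacle is also not where the difficulty lies. Any holomorphic fiber coordinate on the total space over $\Delta$ automatically satisfies $\partial_{\bar t_j}\zeta=0$ in the product coordinates, because $\partial_{\bar t_j}$ lies in the total $(0,1)$-distribution. So neither Ahlfors--Bers nor a Beurling-transform Neumann series is needed. Finally, for the matrix version the hypotheses are imposed on the matrices themselves, so you do not need to build frames from Kuranishi data.
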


\begin{proof}
A relative holomorphic coordinate $\zeta=\zeta(p,t)$ is holomorphic in $t$ for fixed $p$, because the total $(0,1)$-distribution contains $\partial_{\bar t_j}$. We choose a smooth simple closed curve $\Gamma\subset\Sigma$ bounding a coordinate disc in the central fiber. After shrinking $\Delta$, the curves $\zeta(\Gamma,t)$ enclose a fixed disc in the $\zeta$-plane for every $t\in\Delta$. For $f$ as in the statement, the integral
$$
\widetilde f(\zeta,t)=\frac{1}{2\pi i}\int_\Gamma
\frac{f(p,t)\,d_p \zeta(p,t)}{\zeta(p,t)-\zeta}
$$
is holomorphic in $(\zeta,t)$: the denominator stays nonzero on the compact contour, and the $C^1$-valued holomorphic dependence gives locally uniform bounds for the integrand and its parameter derivatives. Differentiation under the integral is therefore valid. For $t\in T$, the one-variable Cauchy formula gives $\widetilde f(\,\cdot\,,t)=f(\,\cdot\,,t)$. Any two extensions therefore have the same restriction to the pullback curve. The total space of the pullback family is reduced, since the family is smooth over the reduced base $T$. Equality on its underlying set therefore implies equality as holomorphic functions. Thus the local functions glue, and the argument applies entrywise to matrices representing bundle homomorphisms.
\end{proof}

Conversely, a holomorphic family admits a smooth local trivialization over the fixed bundle.

\begin{lem}
\label{chlocal:lem:smooth-trivialization}
Let $(S,s_0)$ be a reduced analytic germ carrying a marked curve family whose classifying map takes values in $B$, together with a family of trace-free Higgs bundles with a determinant trivialization. Any specified determinant-preserving smooth identification of its central bundle with $E$ extends, after shrinking, to a determinant-preserving smooth trivialization whose coefficients $(a(s),\psi(s))$ are holomorphic as $W^{m,2}$-valued maps for every finite $m$. At $s_0$, they agree with the prescribed coefficients. Holomorphicity on $S$ is understood through a local embedding in a polydisc and holomorphic extensions to that polydisc.
\end{lem}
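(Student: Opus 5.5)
The plan is to first reduce to the case where $S$ is a polydisc germ, and then to build the trivialization from a holomorphic frame-adapted construction.

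\textbf{Reduction to a polydisc.} Embed $(S,s_0)$ in a polydisc $\Delta$. The classifying map $S\to B$ is holomorphic, so after shrinking it extends to a holomorphic map $\Delta\to B$. Pull back the Beltrami family of Lemma~\ref{chlocal:lem:marked-base} along this map. This gives a marked curve family over $\Delta$ together with a smooth identification of its total space with $\Sigma\times\Delta$. Over $S$, this family is identified with the given one by the universal property of Lemma~\ref{chlocal:lem:marked-base}, and the identification covers the identity on the curves.

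\textbf{Construction of the trivialization.} Since $S$ is a Stein germ, after shrinking the Higgs bundle $\mathcal E$ on the total space over $S$ is defined. Cover $\Sigma$ by finitely many relatively compact coordinate discs $W_j$. Over $W_j\times S$, choose holomorphic frames of $\mathcal E$ by extending a frame of the central fiber. This is possible after shrinking, using that relative coordinates are holomorphic in $s$ and that $\mathcal E$ is locally free on a space that is locally a product. Take the transition matrices $g_{jk}(p,s)$, which are holomorphic, and extend them holomorphically in $s$ to $\Delta$. This uses the local extension property for holomorphic functions on a subspace of a product, viewed as coherent sheaf sections. Now fix the smooth bundle $E$, and compare the frames with $E$ via a smooth partition of unity $\{\rho_j\}$ on $\Sigma$ that is independent of $s$.

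More concretely, I would construct a smooth isomorphism $\Phi_s\colon E\to\mathcal E_s$ by averaging as follows. Start from the prescribed identification $\Phi_{s_0}$. Write it locally as matrices $f_j(p)$ in the frames. Set
$$
\Phi_s|_{W_j}=\sum_k \rho_k\, g_{jk}(\cdot,s) f_k,
$$
which is invertible for $s$ near $s_0$. Then normalize the determinant by dividing by an $r$-th root of $\det\Phi_s$ relative to the two determinant trivializations. This ratio is a holomorphic function of $s$ with value $1$ at $s_0$, so the root is defined after shrinking.

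\textbf{Holomorphicity and regularity of the coefficients.} The coefficients $(a(s),\psi(s))$ are computed by pulling back the Dolbeault operator $\bar\partial-\iota_{\mu(b(s))}\partial$ and the Higgs field through $\Phi_s$. Since $\Phi_s$ depends holomorphically on $s$, with smooth dependence in $p$ and all $p$-derivatives holomorphic, the formulas in Proposition~\ref{chlocal:prop:relative-equations} express $a,\psi$ as polynomials in $\Phi_s^{\pm1}$, their derivatives, and $\mu(b(s))$. These quantities are holomorphic into $C^m$, and hence into $W^{m,2}$ for every $m$. Trace-freeness of $a$ follows from the determinant normalization, and trace-freeness of $\psi$ follows from the trace-free hypothesis. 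At $s_0$ the construction returns $\Phi_{s_0}$, so the coefficients agree with the prescribed ones.

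\textbf{Main obstacle.} The delicate step is the extension of holomorphic objects from the possibly singular reduced germ $S$ to $\Delta$. It has to be done compatibly, so that $\Phi_s$ is holomorphic in the sense of the statement, meaning via extensions. I would handle it by extending only the finitely many transition matrices and the Higgs field coefficients in the frames, using Cartan's Theorem~B on a Stein neighborhood to extend sections from $S\times W_j$ to $\Delta\times W_j$. Smoothness in $p$ uniformly in $s$ is then checked using Cauchy estimates. Lemma~\ref{chlocal:lem:fiberwise-holomorphicity} ensures consistency: any two extensions agree on $S$, so the resulting coefficient maps are well defined as germs on $S$.
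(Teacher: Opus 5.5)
Your construction is the paper's, up to the choice of averaging map. You embed $S$ in a polydisc $\Delta$, extend $b$ and pull back the Beltrami family, and take finitely many patches with holomorphic frames. You then extend the transition matrices and Higgs coefficients holomorphically to $\Delta$, average the prescribed central identification with a partition of unity, normalize by an $r$-th root of the determinant, and read off $(a,\psi)$ as finite expressions controlled by Cauchy estimates. Your formula $\sum_k\rho_k\,g_{jk}(\cdot,s)f_k$ replaces the paper's $\mathcal I_{s_0}^{-1}\operatorname{pr}_0\mathcal I_s$ and is slightly cleaner, since no projection onto $\operatorname{im}\mathcal I_{s_0}$ is needed. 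For $s\in S$ it glues correctly by the cocycle identity and returns $f_j$ at $s_0$.

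The step that fails as written is the passage from $S$ to $\Delta$. The holomorphic extensions of the $g_{jk}$ need not satisfy $g_{ij}g_{jk}=g_{ik}$ off $S$. So for $s\in\Delta\setminus S$ the local matrices $\Phi^{(j)}_s=\sum_k\rho_k\,g_{jk}(\cdot,s)f_k$ do not glue to a global map, and neither do the coefficients or the determinant ratio computed from them. Hence ``$\Phi_s$ depends holomorphically on $s$'' has no meaning off $S$. Your remark that two extensions agree on $S$ is only a uniqueness statement, and it does not need Lemma~\ref{chlocal:lem:fiberwise-holomorphicity}. It does not produce the holomorphic map $\Delta\to W^{m,2}$ that the lemma asserts.

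The repair is the paper's last step:
\begin{itemize}
\item For every $s\in\Delta$, compute $a^{(j)}(s)$ and $\psi^{(j)}(s)$ on $W_j$ from $\Phi^{(j)}_s$, its derivatives and inverse, $\mu(b(s))$, and the patchwise determinant root. This also requires extending the local coefficients of the determinant trivialization, or choosing frames normalized against it.
\item These are genuine $\End E$-valued forms on $W_j$, holomorphic in $s$ into $C^m(\overline{W_j})$, and they agree on overlaps when $s\in S$.
\item Glue them as $\sum_j\rho_j a^{(j)}(s)$ with a partition of unity on $\Sigma$, and similarly for $\psi$.
\item Apply the trace-free projection, because trace-freeness is only guaranteed for $s\in S$.
\end{itemize}
The resulting maps are holomorphic into every $W^{m,2}$, restrict to the true coefficients on $S$, and take the prescribed values at $s_0$.
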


\begin{proof}
The local universal property of the Teichm\"uller family gives $b\colon S\to B$. We choose an embedding of $S$ into a polydisc and extend $b$ holomorphically, so that the Beltrami family is defined over the polydisc. After shrinking $S$ once, all the constructions below are defined on the same neighborhood.

We choose finitely many relatively compact coordinate patches on $\Sigma$ with holomorphic bundle frames over $S$. Their transition matrices, the local coordinate functions of the chosen determinant trivialization, and the Higgs-field coefficients admit extensions that are holomorphic in the polydisc parameter. In the fixed coordinates on $\Sigma$, these extensions are smooth along the fibers, with uniform bounds for all derivatives along $\Sigma$ up to any prescribed order. Their restrictions to $S$ satisfy the original cocycle identities.

We identify the central bundle with $E$ by the prescribed smooth isomorphism. Fix a smooth partition of unity $(\varrho_i)$ subordinate to the patches and denote the frame coordinate maps by $\phi_i(s)$. If there are $N$ patches, define
$$
\mathcal I_s:(E_s)_y\longrightarrow\mathbb C^{rN},
\qquad e\longmapsto(\varrho_i(y)\phi_i(s)e)_i.
$$
Choose a smooth projection $\operatorname{pr}_0$ onto $\operatorname{im}\mathcal I_{s_0}$ and set
$$
\Xi_s:E_s\longrightarrow E,
\qquad \Xi_s=\mathcal I_{s_0}^{-1}\operatorname{pr}_0\mathcal I_s.
$$
Each term in $\mathcal I_s$ is extended by zero outside its patch. The map $\mathcal I_s$ is injective and $\Xi_{s_0}=\Id_E$; operator-norm continuity and compactness make $\Xi_s$ invertible throughout the chosen neighborhood. Its local matrices are smooth in $y$ and holomorphic in $s$.

Relative to the chosen determinant trivializations, the function $\det\Xi_s$ equals $1$ at $s=s_0$. After a further shrinking of $S$, its image lies in a simply connected disc centered at $1$. We choose the branch of $(\det\Xi_s)^{-1/r}$ that equals $1$ at $s_0$ and set
$$
\widehat\Xi_s=(\det\Xi_s)^{-1/r}\Xi_s,
\qquad
\det\widehat\Xi_s=1.
$$
The chosen root is smooth in $y$ and holomorphic in $s$.

Using $\widehat\Xi_s$, we obtain
$$
\begin{aligned}
\widehat\Xi_s\bar\partial_{E_s}\widehat\Xi_s^{-1}
&=\bar\partial_E-\iota_{\mu(b(s))}\partial_h+a(s),\\
\widehat\Xi_s\varphi_s\widehat\Xi_s^{-1}
&=\varphi_{\mathrm{tot}}(s)+\iota_{\mu(b(s))}\varphi_{\mathrm{tot}}(s),
\end{aligned}
$$
where $\varphi_{\mathrm{tot}}(s)=\varphi+\psi(s)$.
In the chosen frames, these coefficients are finite expressions in the extended matrices, their derivatives along $\Sigma$, and their inverses. The patchwise expressions agree over $S$. A smooth partition of unity in the fixed bundle, followed by the trace-free projection, gives global extensions of these coefficients over the parameter polydisc. Uniform bounds on the closures of the coordinate patches and the Banach-valued Cauchy formula give holomorphicity into every $W^{m,2}$. The construction preserves the prescribed values at $s_0$.
\end{proof}

The resulting Kuranishi family is locally complete over reduced bases.

\begin{thm}\label{chlocal:thm:relative-family}
We set $Z_x=(\kappa^{-1}(0))_{\mathrm{red}}$. The maps
$$
\operatorname{pr}_B(b,v)=b,
\qquad
\theta_x(b,v)=(b,A(b,v))
$$
define a holomorphic marked Higgs family on $Z_x$. Every family over a reduced germ $(S,s_0)$, whose underlying marked curve family has classifying map $b\colon S\to B$ and whose central fiber is identified with $x$, is locally isomorphic to $\gamma^*\theta_x$ for a holomorphic map $\gamma\colon S\to Z_x$ satisfying $\operatorname{pr}_B\circ\gamma=b$. The isomorphism preserves the determinant trivialization and the chosen identification of the fiber over $s_0$ with $x$. For each $\sigma\in H_x$, the action on the common domain of $(b,v)$ and $(b,v^\sigma)$ is induced by a determinant-preserving holomorphic isomorphism between the corresponding pullback families.
\end{thm}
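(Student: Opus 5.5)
The plan has three parts: show that $\theta_x$ is a holomorphic family on $Z_x$, prove local completeness by gauge-fixing the smooth trivialization of Lemma~\ref{chlocal:lem:smooth-trivialization}, and check the $H_x$-equivariance.

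\emph{The family $\theta_x$ is holomorphic.} Over $\Omega$, the pair $(b,A(b,v))$ defines the Dolbeault operator $\bar\partial_E-\iota_{\mu(b)}\partial_h+a(b,v)$ and the relative Higgs field $\varphi_{\mathrm{tot}}+\iota_{\mu(b)}\varphi_{\mathrm{tot}}$ on the pulled-back Beltrami family. By Proposition~\ref{chlocal:prop:kuranishi-reduction}, $F(b,A)=\kappa(b,v)$, which vanishes on $Z_x$. The vertical operator is integrable on each fiber, so the Koszul--Malgrange theorem gives local holomorphic frames fiberwise. To obtain frames holomorphic on the total space, I would solve the frame equation $(\bar\partial-\iota_{\mu(b)}\partial_h+a)s=0$ locally with parameters over the ambient polydisc $\Omega$. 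The coefficients are holomorphic into every $W^{m,2}$ by Lemma~\ref{chlocal:lem:kuranishi-regularity}, and a parametrized $\bar\partial$-solution (for instance a Cauchy transform on a disc) makes the solution depend holomorphically on $(b,v)$ in $C^1$. Lemma~\ref{chlocal:lem:fiberwise-holomorphicity} then upgrades fiberwise holomorphicity over the reduced space $Z_x$ to holomorphicity on the total space. It applies to the frames, to the transition matrices, and to the Higgs coefficients, since the Higgs field is fiberwise holomorphic exactly where $F=0$. The determinant trivialization is inherited: $a$ and $\psi$ are trace-free, so $\det$ of the frame is a holomorphic section of $\det E$ with respect to the fixed operator $d$.

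\emph{Local completeness.} Let a family over $(S,s_0)$ be given. Lemma~\ref{chlocal:lem:smooth-trivialization} produces coefficients $(a(s),\psi(s))$ with prescribed central value $0$, holomorphic into $W^{k,2}$ after extension to a polydisc. Moreover $F(b(s),\alpha(s))=0$ on $S$ by Proposition~\ref{chlocal:prop:relative-equations}. After shrinking $S$, the point $(b(s),\alpha(s))$ lies in $\mathcal V$, and I would set $\gamma=c_x\circ(b,\alpha)$. This is holomorphic by Proposition~\ref{chlocal:prop:kuranishi-reduction} and takes values in $\kappa^{-1}(0)$. Since $S$ is reduced, $\gamma$ factors through $Z_x$, and clearly $\operatorname{pr}_B\circ\gamma=b$. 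The same proposition gives $\alpha(s)^{g(s)}=A(\gamma(s))$ with $g(s)=\exp u(b(s),\alpha(s))$. The gauge $g(s)$ is determinant one, since $u$ is trace-free, and it is holomorphic in $s$ into $W^{k+1,2}$. By \eqref{chlocal:eq:gauge-regularity} it is smooth on fibers, and $g(s_0)=1$ because $u(0,0)=0$, so the central identification with $x$ is preserved. Composing $\widehat\Xi_s$ with $g(s)$ gives a fiberwise isomorphism intertwining the operators and Higgs fields. In local holomorphic frames of both families its matrices are fiberwise holomorphic and depend holomorphically on $s$ into $C^1$. Lemma~\ref{chlocal:lem:fiberwise-holomorphicity} therefore makes it a holomorphic isomorphism of families.

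\emph{Equivariance.} For $\sigma\in H_x$, \eqref{chlocal:eq:restricted-equivariance} gives $A(b,v^\sigma)=A(b,v)^\sigma$, and $\sigma$ acts by constant conjugation since it is parallel and commutes with $\varphi$ (Lemma~\ref{chlocal:lem:harmonic-equivariance}). Hence the constant determinant-one gauge $\sigma$ identifies the fiber of $\theta_x$ at $(b,v^\sigma)$ with the fiber at $(b,v)$. The first step shows it is holomorphic on the total space over the common domain, and the holomorphic dependence on $(\sigma,b,v)$ is the last assertion of Lemma~\ref{chlocal:lem:kuranishi-regularity}.

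The main obstacle is the first step, producing holomorphic frames jointly in the fiber and the base over a possibly singular $Z_x$. I would handle it by working on the smooth ambient $\Omega$ with the non-integrable operator, using only the $C^1$-in-parameter estimates. The vanishing of $F$ is invoked only on the reduced locus, where Lemma~\ref{chlocal:lem:fiberwise-holomorphicity} applies.
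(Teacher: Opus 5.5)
Your proposal is correct and follows the paper's proof essentially step for step. The shared steps are the classifying map $\gamma=c_x\circ(b,\alpha)$ factoring through $Z_x$ by reducedness, the fiberwise isomorphism $\exp(-u)\circ\widehat\Xi_s$ upgraded to a holomorphic one by Lemma~\ref{chlocal:lem:fiberwise-holomorphicity}, and equivariance via the constant parallel determinant-one gauge $\sigma$.

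The one variation is how you obtain the holomorphic bundle. The paper notes that the total $(0,1)$-operator on the pullback over $\Omega$ squares to zero, because the parameter dependence is holomorphic and $\dim_{\mathbb C}X=1$. So, contrary to your remark, that operator is integrable on all of $\Omega$, and only the Higgs field needs $F=0$ on $Z_x$. Your parameter-dependent frame construction amounts to a hands-on proof of this same integrability.
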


\begin{proof}
Over $\Omega$, we write
$$
A(b,v)=\bigl(a(b,v),\psi(b,v)\bigr),
\qquad
\varphi_{\mathrm{tot}}(b,v)=\varphi+\psi(b,v),
$$
and set
$$
L_{b,v}=\bar\partial_E-\iota_{\mu(b)}\partial_h+a(b,v),
\qquad
\varphi^{\mathrm{rel}}_{b,v}=\varphi_{\mathrm{tot}}(b,v)+\iota_{\mu(b)}\varphi_{\mathrm{tot}}(b,v).
$$
Lemma~\ref{chlocal:lem:kuranishi-regularity} gives coefficients that are smooth along the fibers and holomorphic in $(b,v)$ with values in every $W^{m,2}$. Combining $L_{b,v}$ with the standard $\bar\partial$-operator in the $B\times U$ directions defines a total $(0,1)$-operator. Holomorphic dependence in the parameter directions and $\dim_{\mathbb C}X=1$ imply that its square vanishes. It therefore defines a holomorphic bundle on the pullback of the marked curve family to $\Omega$ \cite[proof of Corollary~4.1]{Ono2022Deformations}; the chosen determinant trivialization is holomorphic because $\tr a(b,v)=0$. On $Z_x$, the relative Dolbeault derivative of $\varphi^{\mathrm{rel}}_{b,v}$ vanishes because
$$
F(b,A(b,v))=0.
$$
Lemma~\ref{chlocal:lem:fiberwise-holomorphicity} therefore shows that $\varphi^{\mathrm{rel}}_{b,v}$ is a holomorphic relative Higgs field. These data define $\theta_x$. For $\sigma\in H_x$, equation~\eqref{chlocal:eq:restricted-equivariance} is induced by a holomorphic isomorphism between the corresponding pullback families on their common domain.

For a family as in the theorem, Lemma~\ref{chlocal:lem:smooth-trivialization} gives coefficients $(b(s),\alpha(s))$ with $\alpha(s_0)=0$. Proposition~\ref{chlocal:prop:kuranishi-reduction} gives the holomorphic map
$$
\gamma(s)=c_x(b(s),\alpha(s))
=\left(b(s),\mathsf H_1\bigl(\alpha(s)^{\exp u(b(s),\alpha(s))}\bigr)\right).
$$
Gauge covariance gives $\kappa\circ\gamma=0$. Since $S$ is reduced, $\gamma$ factors through $Z_x$.

With the trivialization $\widehat\Xi_s$ from Lemma~\ref{chlocal:lem:smooth-trivialization}, the family isomorphism to $\gamma^*\theta_x$ is represented by
$$
\exp(-u(b(s),\alpha(s)))\circ\widehat\Xi_s.
$$
In local holomorphic frames, its matrix extends holomorphically in the polydisc parameter with values in $C^1$ on compact fiberwise coordinate discs. The gauge equations make the matrix holomorphic on every fiber, and Lemma~\ref{chlocal:lem:fiberwise-holomorphicity} makes it holomorphic on the reduced total space of the curve family. Its inverse is holomorphic by invertibility. At $s_0$, the exponential factor is the identity and $\widehat\Xi_{s_0}$ is the prescribed identification, so the isomorphism induces the chosen identification of the central fiber with $x$. Independence of the Sobolev index and $H_x$-equivariance follow from Lemma~\ref{chlocal:lem:kuranishi-regularity}.
\end{proof}

All harmonic spaces and projections used above are those at $x$, so the construction also applies when the cohomology dimensions of nearby fibers vary.

\subsection{Quadratic obstructions and complex gauge transformations}\label{chlocal:sec:quadratic}\label{chlocal:sec:gauge-transformations}

The harmonic deformation space carries the symplectic structure used to express the quadratic obstruction.

\begin{lem}\label{chlocal:prop:symplectic-representation}\label{chlocal:lem:harmonic-duality}\label{chlocal:prop:moment-map}
Under the identification $V_x\simeq H^1$ by harmonic representatives, $V_x$ carries the $H_x$-invariant symplectic form
$$
\Omega_{\mathbb C}((a,\psi),(a',\psi'))=\int_X\tr(a\wedge\psi'-a'\wedge\psi).
$$
For $\mathfrak h_x=H^0$, the trace pairing identifies $H^2$ equivariantly with $\mathfrak h_x^*$. For $v=(a,\psi)\in V_x$, the left action
$$
\sigma\cdot v=\sigma v\sigma^{-1},
\qquad \sigma\in H_x,
$$
has complex moment map
$$
\begin{aligned}
\langle\nu_{x,\mathbb C}(v),\xi\rangle
&=\int_X\tr\!\left(\xi\,\tfrac12\mathsf H_2[v,v]\right)\\
&=\tfrac12\Omega_{\mathbb C}([\xi,v],v)
=\int_X\tr(\xi[a,\psi]).
\end{aligned}
$$
We use the convention
$$
d\langle\nu_{x,\mathbb C},\xi\rangle
=\iota_{[\xi,v]}\Omega_{\mathbb C}.
$$
\end{lem}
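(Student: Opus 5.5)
The plan is to check four assertions in turn: the form is nondegenerate, it is invariant, $H^2$ is dual to $\mathfrak h_x$, and the moment-map identity holds. We work throughout with harmonic representatives, using the identities $\Delta'=\Delta''=\tfrac12\Delta_D$ recorded before Lemma~\ref{chlocal:lem:harmonic-equivariance}.

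\emph{Nondegeneracy.} For $v=(a,\psi)\in H^1$, harmonicity gives $D''v=0$ and $(D'')^*v=0$. By the Kähler identity $(D'')^*=-i[\Lambda,D']$, this implies $D'v=0$. Since $\Delta'=\Delta''$, the conjugate-linear map $(a,\psi)\mapsto(\psi^{*h},-a^{*h})$, or a variant with suitable signs, preserves $H^1$. Pairing $v$ with this conjugate gives
$$
\Omega_{\mathbb C}\bigl(v,(\psi^{*h},-a^{*h})\bigr)=\pm\int_X\tr(a\wedge a^{*h}+\psi\wedge\psi^{*h}),
$$
which is a nonzero multiple of $\|v\|_{L^2}^2$. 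I expect fixing the signs here, so that the conjugate really lies in $H^1$ and the integrand is definite, to be the main bookkeeping obstacle. A cleaner route is Serre duality for the hypercohomology of $C^\bullet_x$: the complex is self-dual through the trace pairing, and $\Omega_{\mathbb C}$ is the induced pairing $\mathbb H^1\times\mathbb H^1\to\mathbb H^2(\mathcal O\to K)\simeq\mathbb C$. This gives nondegeneracy at the level of cohomology, and harmonic representatives realize it.

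\emph{Invariance and duality.} Invariance under $H_x$ is immediate from $\tr(\sigma a\sigma^{-1}\wedge\sigma\psi'\sigma^{-1})=\tr(a\wedge\psi')$. The action preserves $H^1$ by Lemma~\ref{chlocal:lem:harmonic-equivariance}. For the duality, pair $\xi\in H^0$ with $z\in H^2$ by $\int_X\tr(\xi z)$. This pairing is equivariant by cyclicity of the trace. It is nondegenerate by the same Serre-duality argument in degrees $0$ and $2$. Concretely, the conjugation $z\mapsto\ast z^{*h}$ maps $H^2$ onto $H^0$, since $\Delta_2$ corresponds to $\Delta_0$ under the Hodge star and adjoint; this uses $\Delta'=\Delta''$ again. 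The trace-free condition is preserved by all of these maps.

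\emph{Moment map.} Set $\mu_\xi(v)=\int_X\tr(\xi[a,\psi])$. From the bracket convention, $[v,v]=2[a,\psi]$ in degree $(1,1)$, up to the sign convention for $S\wedge T$. Since $\xi\in H^0$ is harmonic, $\int\tr(\xi z)=\int\tr(\xi\,\mathsf H_2z)$: the difference $z-\mathsf H_2 z$ lies in $\im D_1$, and $\int\tr(\xi D_1 w)=\pm\int\tr(D_1^*$-adjoint$\cdots)$ vanishes because $D_0\xi=0$. Concretely, $\int\tr(\xi(\bar\partial\psi+[\varphi,a]))$ integrates by parts to terms involving $\bar\partial\xi$ and $[\varphi,\xi]$, both of which vanish. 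This proves the first equality. For the second, $[\xi,v]=([\xi,a],[\xi,\psi])$, and
$$
\tfrac12\Omega_{\mathbb C}([\xi,v],v)=\tfrac12\int\tr\bigl([\xi,a]\wedge\psi-a\wedge[\xi,\psi]\bigr).
$$
Cyclicity turns each term into $\int\tr(\xi(a\wedge\psi+\psi\wedge a))$-type expressions, and these match $\int\tr(\xi[a,\psi])$.

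\emph{Differential.} Finally, $\mu_\xi$ is quadratic, so its differential is the polarization: $d\mu_\xi(v)(w)=\Omega_{\mathbb C}([\xi,v],w)$. This uses the $\mathfrak h_x$-invariance of $\Omega_{\mathbb C}$, namely $\Omega([\xi,v],w)=-\Omega(v,[\xi,w])=\Omega([\xi,w],v)$. That identity is exactly the stated convention $d\langle\nu,\xi\rangle=\iota_{[\xi,v]}\Omega_{\mathbb C}$. Equivariance of $\nu_{x,\mathbb C}$ follows from cyclicity of the trace together with the equivariance of $\mathsf H_2$.
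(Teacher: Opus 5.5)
Your proposal is correct and follows essentially the paper's route: the paper cites Fan for the harmonic pairing and the quadratic moment map, restricts to the trace-free summand, uses Serre duality for $H^2\simeq\mathfrak h_x^*$, uses the fact that integration against $\xi\in H^0$ annihilates $\im D_1$ for the first equality, and uses cyclicity of the trace for the remaining identities, all of which you reproduce. In your explicit nondegeneracy check, the conjugate $(\psi^{*h},-a^{*h})$ does lie in $H^1$ with no further sign changes, but the pairing is $\Omega_{\mathbb C}\bigl(v,(\psi^{*h},-a^{*h})\bigr)=-\int_X\tr(a\wedge a^{*h}+\psi^{*h}\wedge\psi)$, a nonzero multiple of $\|v\|_{L^2}^2$, whereas your displayed integrand $\tr(a\wedge a^{*h}+\psi\wedge\psi^{*h})$ is indefinite because $\tr(\psi\wedge\psi^{*h})=-\tr(\psi^{*h}\wedge\psi)$.
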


For the right-action convention of \eqref{chlocal:eq:right-gauge},
$$
v^g=g^{-1}\mathbin{\cdot}v,
\qquad
(v^g)^\sigma=v^{g\sigma}.
$$
The displayed form and moment map are the standard harmonic pairing and quadratic moment map \cite[Theorem~5.3(1)--(2) and the preceding discussion]{Fan2020Construction}. They restrict to $\End_0E$ through the parallel orthogonal splitting $\End E=\End_0E\oplus\mathbb C\Id_E$. The duality is Serre duality; for $t\in A^{1,1}(\End_0E)$, integration against $\xi\in H^0$ annihilates $\im D_1$, so
$$
\int_X\tr(\xi\mathsf H_2t)=\int_X\tr(\xi t).
$$
Cyclicity of trace gives the displayed sign convention.

The obstruction map is exactly quadratic.

\begin{thm}
\label{chlocal:thm:quadratic-product}
On the domain $\Omega=B\times U$ of Proposition~\ref{chlocal:prop:kuranishi-reduction},
\begin{equation}\label{chlocal:eq:exact-obstruction}
\kappa(b,v)=\tfrac12\mathsf H_2[v,v].
\end{equation}
Consequently, for $Q_x=(\nu_{x,\mathbb C}^{-1}(0))_{\mathrm{red}}$, the identity map on $B\times U$ identifies the reduced zero locus with
$$
Z_x=B\times(Q_x\cap U)
$$
over $B$, equivariantly wherever the action is defined.
\end{thm}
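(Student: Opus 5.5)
The plan is to compute $\kappa=\mathsf H_2F(b,A)$ directly, using the harmonic identities to kill every term of $F(b,A)$ except the quadratic one, and then to show that the correction $w$ does not contribute to the harmonic projection of that quadratic term. Since $H^2$ is finite-dimensional and identified with $\mathfrak h_x^*$ by the trace pairing (Lemma~\ref{chlocal:prop:symplectic-representation}), it suffices to prove, for every $\xi\in H^0$,
$$
\int_X\tr\bigl(\xi\,F(b,A(b,v))\bigr)=\int_X\tr\bigl(\xi\,\tfrac12[v,v]\bigr).
$$

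\textbf{Step 1: expand $F$ and discard the linear terms.} Write $A=(a,\psi)$. From \eqref{chlocal:eq:relative-equation},
$$
F(b,A)=D_1A+M(\mu(b))+\partial_h(\iota_{\mu(b)}\psi)+[a,\psi].
$$
Each term is handled separately.
\begin{itemize}
\item The term $\mathsf H_2D_1A$ vanishes, since $\im D_1\perp H^2$.
\item The term $\mathsf H_2M=0$ by Lemma~\ref{chlocal:lem:hodge-inverse}.
\item For the term $\partial_h(\iota_{\mu(b)}\psi)$, note that $\iota_{\mu(b)}\psi$ is a $(0,1)$-form. Hence $[\varphi^{*h},\iota_{\mu(b)}\psi]$ has type $(0,2)$ and vanishes, so this term equals $D'(\iota_{\mu(b)}\psi)$.
\end{itemize}
Next I show that $D'$-exact $2$-forms are orthogonal to $H^2$. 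For $\xi\in H^0$, $\xi$ is $D'$-parallel by Lemma~\ref{chlocal:lem:harmonic-equivariance}. Stokes' theorem, together with the fact that the $[\varphi^{*h},\cdot]$ part of $D'$ contributes a trace of a commutator, gives $\int\tr(\xi D'\beta)=0$. Thus
$$
\int\tr(\xi F(b,A))=\int\tr(\xi[a,\psi])=\tfrac12\int\tr(\xi[A,A]),
$$
using the graded bracket convention. This is the step where all $b$-dependence visibly disappears, except through $w(b,v)$.

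\textbf{Step 2: show that $w$ does not contribute.} Write $A=v+w$ with $w=D_1^*z$ for some $z$. On $2$-forms, $D_1^*=(D'')^*=-i[\Lambda,D']$, and $D'z=0$ for degree reasons. Hence $w=D'f$ with $f=-i\Lambda z$, a trace-free section. The bracket then splits as
$$
[A,A]=[v,v]+2[v,D'f]+[D'f,D'f].
$$
\begin{itemize}
\item For the cross term: $v$ is harmonic, so $D'v=0$ by $\Delta'=\Delta''$, and $\xi$ is $D'$-parallel. The Leibniz rule gives $\tr(\xi[v,D'f])=\pm\tr\bigl(D'(\xi[v,f])\bigr)$, which integrates to zero by the Stokes argument of Step~1.
\item For the quadratic term: $[D'f,D'f]=D'[f,D'f]\mp[f,(D')^2f]$. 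On a curve, $(D')^2f=[\partial_h\varphi^{*h},f]$, because $\partial_h^2$ and $[\varphi^{*h},\varphi^{*h}]$ vanish for type reasons. Moreover $\partial_h\varphi^{*h}=(\bar\partial\varphi)^{*}=0$, so $(D')^2f=0$ and this term is also $D'$-exact.
\end{itemize}
Pairing with $\xi$ therefore yields $\tfrac12\int\tr(\xi[v,v])$. This establishes \eqref{chlocal:eq:exact-obstruction}, and in particular the independence of $b$.

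\textbf{Step 3: identify the zero locus.} By Lemma~\ref{chlocal:prop:symplectic-representation}, $\kappa(b,v)$ corresponds to $\nu_{x,\mathbb C}(v)$. Hence
$$
\kappa^{-1}(0)=B\times\bigl(\nu_{x,\mathbb C}^{-1}(0)\cap U\bigr)
$$
as analytic sets. Reductions commute with taking the product with the smooth factor $B$, which gives $Z_x=B\times(Q_x\cap U)$. Equivariance follows from \eqref{chlocal:eq:restricted-equivariance} and the $H_x$-equivariance of the moment map.

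\textbf{Main obstacle.} I expect the hardest part to be Step~2: tracking signs and types precisely enough to confirm that $w$ is globally $D'$-exact through a trace-free $0$-form, and that $(D')^2=0$ on the relevant forms. If the identification $D_1^*=\pm iD'\Lambda$ turns out to have sign or type subtleties, I would first fall back on a direct argument. That argument pairs $[v,w]$ against $\xi$ via $\Omega_{\mathbb C}([\xi,v],w)$, and then uses that $[\xi,v]\in H^1$ is orthogonal to $\im D_1^*$ under the symplectic pairing, as in Fan's fixed-curve proof.
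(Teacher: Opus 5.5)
Your proposal is correct and follows essentially the same route as the paper: you write $w=D'f$ via the K\"ahler identity, use $D'v=0$, $(D')^2=0$ and the derivation property to make $[A,A]-[v,v]$ and the Beltrami term $D'$-exact, read off $\kappa=\tfrac12\mathsf H_2[v,v]$, and then take reduced zero loci of a product with $B$. The only differences are cosmetic. You check that $D'$-exact $2$-forms have zero harmonic projection by pairing with $D'$-parallel $\xi\in H^0$, using Stokes and the trace-pairing duality, whereas the paper gets $\mathsf H_2D'=0$ directly from $\Delta'=\Delta''$. Also, with the paper's convention $(D'')^*=-i[\Lambda,D']$ one gets $f=i\Lambda z$ rather than $-i\Lambda z$, which does not affect the argument.
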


\begin{proof}
We apply the cancellation argument from \cite[Theorem~5.1 and Corollary~5.2]{Fan2020Construction} to the relative equation. We write $A=v+w$. Since $w\in\im D_1^*$, the degree-two K\"ahler identity gives
$$
w=D_1^*G_2D_1w=D'f,\qquad f=i\Lambda G_2D_1w.
$$
Here $w\in W^{k,2}$ and $f\in W^{k+1,2}$, and both are smooth by Lemma~\ref{chlocal:lem:kuranishi-regularity}. Equality of harmonic Laplacians gives $D'v=0$ and $\mathsf H_2D'=0$. Since $D'$ is a graded derivation and $(D')^2=0$,
$$
D'[v,f]=-[v,w],\qquad D'[f,w]=[w,w].
$$
The bracket of degree-one forms is symmetric, hence
$$
\mathsf H_2[A,A]=\mathsf H_2[v,v].
$$
The relative equation is
$$
F(b,A)=D_1A+\partial_h\bigl(\iota_{\mu(b)}(\varphi+\psi)\bigr)+\tfrac12[A,A].
$$
The contracted form has type $(0,1)$, so its bracket with $\varphi^{*h}$ vanishes on a curve. Therefore the middle term equals $D'(\iota_{\mu(b)}(\varphi+\psi))$ and has zero harmonic projection. Projection of this equation proves \eqref{chlocal:eq:exact-obstruction} on $\Omega$.

The trace pairing identifies the equations $\kappa=0$ and $\nu_{x,\mathbb C}(v)=0$, independently of $b$. Since a product with a polydisc preserves reducedness, the reduced zero locus is $B\times(Q_x\cap U)$. Its projection to $B$ is $\operatorname{pr}_B$. Equivariance follows from Lemma~\ref{chlocal:lem:kuranishi-regularity}, with the holomorphic family isomorphisms of Theorem~\ref{chlocal:thm:relative-family}.
\end{proof}

We next compare two Coulomb representatives related by a complex gauge transformation. For this purpose, we extend $D_0$ to $D''_0f=(\bar\partial_{\End}f,[\varphi,f])$ on all endomorphisms. Its kernel $\mathcal A_0=\mathbb C\Id_E\oplus H^0$ is a finite-dimensional algebra of parallel endomorphisms that is closed under the $h$-adjoint. Left and right multiplication by its elements commute with $(D''_0)^*$, since they are parallel and commute with $\varphi^{*h}$. Here $(D''_0)^*$ denotes the adjoint on the full bundle $\End E$. The elliptic estimate on the orthogonal complement of the kernel gives
\begin{equation}\label{chlocal:eq:kernel-estimate}
\|g_\perp\|_{1,2}\leq C_0\|D''_0g_\perp\|_2,\qquad g_\perp\perp\mathcal A_0.
\end{equation}
This is the estimate used in the proof of \cite[Theorem~4.1]{Fan2020OrbitTypes}.

This estimate controls gauge transformations between Coulomb representatives satisfying a uniform Sobolev norm bound.

\begin{prop}
\label{chlocal:prop:local-gauge-uniqueness}
There exist a neighborhood $B'\subset B$ of zero and $\epsilon>0$ such that the following holds for every $b\in B'$. Let $\alpha_i=(a_i,\psi_i)$, $i=1,2$, satisfy $D_0^*\alpha_i=0$ and $\|\alpha_i\|_{k,2}<\epsilon$. If $g\in\mathcal G_{k+1}^{\mathbb C}$ satisfies $(b,\alpha_1)^g=(b,\alpha_2)$, then $g\in H_x$ and $\alpha_2=g^{-1}\alpha_1g$. The choices of $B'$ and $\epsilon$ are independent of $g$.
\end{prop}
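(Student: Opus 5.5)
The plan is to adapt the argument of \cite[Theorem~4.1]{Fan2020OrbitTypes}, with constants kept uniform in $b$. The gauge relation \eqref{chlocal:eq:right-gauge} can be rewritten linearly in $g$. The Dolbeault part reads
$$
\bar\partial_{\End}g-\iota_{\mu(b)}\partial_hg=ga_2-a_1g ,
$$
and the Higgs part reads
$$
[\varphi,g]=g\psi_2-\psi_1g .
$$
Together these say $D''_0g=\mathcal E_b(g)$, where $\mathcal E_b(g)$ collects the term $\iota_{\mu(b)}\partial_hg$ and the terms bilinear in $g$ and $\alpha_i$. Decompose $g=g_0+g_\perp$ with $g_0\in\mathcal A_0$ and $g_\perp\perp\mathcal A_0$. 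Since $D''_0g_0=0$, the estimate \eqref{chlocal:eq:kernel-estimate} gives
$$
\|g_\perp\|_{1,2}\le C_0\bigl(\|\mu(b)\|_\infty\|\partial_hg\|_2+C\epsilon\|g\|_{1,2}\bigr).
$$
Here the Sobolev multiplication $W^{k,2}\times W^{1,2}\to L^2$ is used, which is valid since $k\ge4$. The $\partial_hg_0$ term vanishes because $g_0$ is parallel. Choosing $B'$ with $\|\mu(b)\|_\infty$ small and $\epsilon$ small gives
$$
\|g_\perp\|_{1,2}\le\delta\bigl(\|g_0\|+\|g_\perp\|_{1,2}\bigr),
$$
with $\delta$ small, hence $\|g_\perp\|\le 2\delta\|g_0\|$. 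Because $g$ is invertible and $g_\perp$ is controlled by $g_0$, we have $g_0\neq0$. Since $\mathcal A_0$ is finite dimensional, all norms on it are equivalent.

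The Coulomb conditions are used next to kill $g_\perp$. Apply $(D''_0)^*$ to the equation $D''_0g=\mathcal E_b(g)$. Because $D_0^*\alpha_i=0$ and left and right multiplication by the parallel elements of $\mathcal A_0$ commute with $(D''_0)^*$, the contribution of $g_0$ to the bilinear terms becomes $g_0a_2-a_1g_0$. Its image under $(D''_0)^*$ is $g_0D_0^*\alpha_2-(D_0^*\alpha_1)g_0=0$, up to the adjoint sign conventions; the Higgs part is handled the same way. What remains is an equation $\Delta g_\perp=(D''_0)^*\mathcal E_b'(g_\perp)+(D''_0)^*(\iota_{\mu(b)}\partial_h\cdot)$, whose right side is small relative to $g_\perp$. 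The invertibility of $\Delta=(D''_0)^*D''_0$ on $\mathcal A_0^\perp$ then forces $g_\perp=0$.

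The main obstacle is the $\mu(b)$ term. On $g_0$ it vanishes, since $\partial_hg_0=0$. On $g_\perp$ it must be handled by smallness of $\|\mu(b)\|$ at the correct Sobolev level. I would work at the $W^{1,2}$ level with $\|\mu(b)\|_{C^1}$ small, or else absorb the term by using the uniformly elliptic operator of Lemma~\ref{chlocal:lem:uniform-estimates} in place of $D''_0$. A further issue is that the Coulomb condition $D_0^*$ is taken at $b=0$ while the gauge acts at $b$. This mismatch contributes only $\mu(b)$-small terms, again absorbed. These bounds are independent of $g$ because every estimate is homogeneous linear in $g$.

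Once $g=g_0\in\mathcal A_0$, the element $g$ is parallel and commutes with $\varphi$. The relation then reduces to $\alpha_2=g^{-1}\alpha_1g$, and the Higgs relation $\varphi^g=\varphi$ shows that $g$ preserves $\varphi$. Since $\det g=1$, it remains to check that $g\in\Aut(E,\varphi)$, which holds because $\mathcal A_0$ consists of Higgs endomorphisms and $g$ is invertible. Hence $g\in H_x$, as claimed.
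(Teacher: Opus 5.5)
Your proposal is correct and follows essentially the paper's route: you rewrite the gauge relation as an equation for $D''_0g$, split $g$ along $\mathcal A_0$, and use parallelism and the Coulomb conditions to cancel the $\mathcal A_0$-component after applying $(D''_0)^*$. Smallness then forces $g_\perp=0$ via \eqref{chlocal:eq:kernel-estimate}; the paper pairs with $g_\perp$ rather than inverting $\Delta$, and that energy estimate shows your preliminary bound $\|g_\perp\|\le2\delta\|g_0\|$ is unnecessary and that $C^0$-smallness of $\mu(b)$ suffices, since $\iota_{\mu(b)}\partial_hg_\perp$ is only estimated in $L^2$.
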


\begin{proof}
All projections, norms, and adjoints below are computed from the fixed data at $x$.
Preservation of the scalar and trace-free summands gives
$(D''_0)^*\alpha_i=D_0^*\alpha_i=0$.

We multiply the action equations \eqref{chlocal:eq:right-gauge} by $g$ and obtain
\begin{equation}\label{chlocal:eq:finite-gauge-equation}
D''_0g=
\left(
ga_2-a_1g+\iota_{\mu(b)}\partial_hg,\,
g\psi_2-\psi_1g
\right).
\end{equation}
We decompose $g=g_\parallel+g_\perp$ by the fixed $L^2$-projection onto
$\mathcal A_0$.
We have $D''_0g=D''_0g_\perp$ and $\partial_hg_\parallel=0$.
The multiplication identities and Coulomb conditions give
$$
(D''_0)^*
(g_\parallel a_2-a_1g_\parallel,
g_\parallel\psi_2-\psi_1g_\parallel)
=
g_\parallel(D''_0)^*\alpha_2
-\bigl((D''_0)^*\alpha_1\bigr)g_\parallel=0.
$$
After applying $(D''_0)^*$ to \eqref{chlocal:eq:finite-gauge-equation}, every term involving $g_\parallel$ cancels.

We set
$$
\mathcal N(g_\perp)=
\left(
g_\perp a_2-a_1g_\perp+\iota_{\mu(b)}\partial_hg_\perp,\,
g_\perp\psi_2-\psi_1g_\perp
\right).
$$
The remaining equation is
$$
(D''_0)^*D''_0g_\perp=(D''_0)^*\mathcal N(g_\perp).
$$
Pairing with $g_\perp$ and integrating by parts yields
\begin{equation}\label{chlocal:eq:gauge-energy}
\|D''_0g_\perp\|_2^2
\leq
\|\mathcal N(g_\perp)\|_2\,
\|D''_0g_\perp\|_2.
\end{equation}
The stated Sobolev orders justify the products and integration by parts.

The estimate \eqref{chlocal:eq:kernel-estimate} gives $\|g_\perp\|_{1,2}\leq C_0\|D''_0g_\perp\|_2$.
Pointwise multiplication and the embedding $W^{k,2}\hookrightarrow C^0$ give a constant $C_1$ such that
$$
\|\mathcal N(g_\perp)\|_2
\leq
C_1\bigl(
\|\alpha_1\|_{k,2}+\|\alpha_2\|_{k,2}
+\|\mu(b)\|_{C^0}
\bigr)\|g_\perp\|_{1,2}.
$$
We choose $\epsilon$ and $B'$ so that
$$
C_0C_1
\left(2\epsilon+\sup_{b\in B'}\|\mu(b)\|_{C^0}\right)<1.
$$
Since $\mu(0)=0$, such choices exist. Equations~\eqref{chlocal:eq:gauge-energy} and \eqref{chlocal:eq:kernel-estimate} then give $D''_0g_\perp=0$ and hence $g_\perp=0$.

Hence $g=g_\parallel$ is a smooth Higgs endomorphism of the central fiber.
The original gauge is invertible and has determinant one, so $g\in H_x$.
Its parallelism reduces the gauge action in \eqref{chlocal:eq:right-gauge} to conjugation for every $b$.
\end{proof}

The preceding proposition also determines the stabilizers in the Kuranishi family.

\begin{cor}
\label{chlocal:cor:local-stabilizers}
There is a product domain $B\times U\subset B'\times H^1$ on which $\|A(b,v)\|_{k,2}<\epsilon$, where $B'$ and $\epsilon$ are as in Proposition~\ref{chlocal:prop:local-gauge-uniqueness}. On this domain, $\theta_x(b,v_1)^g=\theta_x(b,v_2)$ holds exactly when $g\in H_x$ and $v_2=v_1^g$. In particular $\Stab_{\mathcal G_{k+1}^{\mathbb C}}\theta_x(b,v)=\Stab_{H_x}(v)$.
\end{cor}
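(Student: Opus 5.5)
We shrink the domain first and then apply Proposition~\ref{chlocal:prop:local-gauge-uniqueness} to the Coulomb representatives $A(b,v)$.

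\emph{Choice of the domain.} By Proposition~\ref{chlocal:prop:kuranishi-reduction}, $A\colon\Omega\to W^{k,2}$ is holomorphic, hence continuous, with $A(0,0)=0$. So there are balls $B\subset B'$ and $U\subset H^1$ centered at the origin with $\|A(b,v)\|_{k,2}<\epsilon$ on $B\times U$. We may shrink further so that $B\times U$ lies in the domain of Theorem~\ref{chlocal:thm:quadratic-product}. Each $A(b,v)$ satisfies $D_0^*A=0$ and $\mathsf H_1A=v$.

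\emph{The forward implication.} Suppose $\theta_x(b,v_1)^g=\theta_x(b,v_2)$, that is, $(b,A(b,v_1))^g=(b,A(b,v_2))$. Proposition~\ref{chlocal:prop:local-gauge-uniqueness} gives $g\in H_x$ and $A(b,v_2)=g^{-1}A(b,v_1)g$. We then take harmonic projections. By Lemma~\ref{chlocal:lem:harmonic-equivariance}, conjugation by $g$ commutes with $\mathsf H_1$. Hence
$$v_2=\mathsf H_1A(b,v_2)=g^{-1}\mathsf H_1A(b,v_1)g=v_1^g.$$

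\emph{The converse.} Suppose $g\in H_x$ and $v_2=v_1^g$, with both $(b,v_1)$ and $(b,v_2)$ in the domain. Equation~\eqref{chlocal:eq:restricted-equivariance} of Lemma~\ref{chlocal:lem:kuranishi-regularity} gives $A(b,v_1^g)=A(b,v_1)^g$. Since $g$ is parallel and commutes with $\varphi$, the gauge action \eqref{chlocal:eq:right-gauge} reduces to conjugation and fixes $b$. So $\theta_x(b,v_1)^g=\theta_x(b,v_2)$.

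\emph{Stabilizers.} Taking $v_1=v_2=v$ gives $\Stab_{\mathcal G^{\mathbb C}_{k+1}}\theta_x(b,v)=\Stab_{H_x}(v)$.

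\emph{Main obstacle.} The delicate point is the domain bookkeeping. The equivariance identity was proved only on $\Omega_\sigma$, where both $(b,v)$ and $(b,v^\sigma)$ lie in $\Omega$. In the converse both points are assumed to lie in $B\times U$, so this is exactly the hypothesis needed, and choosing $U$ as an $H_x$-independent ball suffices. A second concern is that the gauge equality should not implicitly require $F=0$. This is harmless: Proposition~\ref{chlocal:prop:local-gauge-uniqueness} uses only the Coulomb condition and the norm bound, and both hold for every $(b,v)\in B\times U$.
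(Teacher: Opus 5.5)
Your proposal is correct and matches the paper's proof: apply Proposition~\ref{chlocal:prop:local-gauge-uniqueness} to the Coulomb representatives $A(b,v_i)$, use that $\mathsf H_1$ commutes with conjugation by $H_x$ to get $v_2=v_1^g$, and obtain the converse from \eqref{chlocal:eq:restricted-equivariance}. Your explicit choice of the domain via continuity of $A$ at the origin, and your remark that no $F=0$ hypothesis is needed, only spell out what the paper leaves implicit.
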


\begin{proof}
Proposition~\ref{chlocal:prop:local-gauge-uniqueness} gives $g\in H_x$ and
$$
A(b,v_2)=A(b,v_1)^g.
$$
Since $\mathsf H_1$ commutes with $H_x$, applying it gives $v_2=v_1^g$. The converse follows from \eqref{chlocal:eq:restricted-equivariance}. Taking $v_1=v_2$ gives the stabilizer identity, and every such gauge is smooth by parallelism.
\end{proof}

\subsection{Analytic Hilbert quotients}\label{chlocal:sec:quotients}

The stabilizer $H_x$ is a reductive algebraic group, with maximal compact subgroup $K_x=H_x\cap\mathcal G_{k+1}$ and polar decomposition $H_x=K_x\exp(i\Lie K_x)$ \cite[Proposition~3.1]{Fan2020Construction}. Explicitly, a polystable decomposition $(E,\varphi)=\bigoplus_\alpha(E_\alpha,\varphi_\alpha)\otimes\mathbb C^{m_\alpha}$, with pairwise nonisomorphic stable factors of ranks $d_\alpha$, gives
$$
H_x=\{(g_\alpha)\in\textstyle\prod_\alpha\mathrm{GL}(m_\alpha,\mathbb C):\prod_\alpha\det(g_\alpha)^{d_\alpha}=1\}.
$$
Blockwise polar decomposition preserves this character kernel, including all components. Conjugation on $V_x$ and $Q_x$ is algebraic.

We set $Y_x=(\Spec\mathbb C[Q_x]^{H_x})^{\mathrm{an}}$ and write $q_x\colon Q_x\to Y_x$. This is the analytic Hilbert quotient for the $H_x$-action: it is a surjective $H_x$-invariant holomorphic map, it is locally Stein, and
$$
\mathcal O_{Y_x}=((q_x)_*\mathcal O_{Q_x})^{H_x}.
$$

The affine quotient restricts to a saturated domain containing the origin.

\begin{thm}
\label{chlocal:lem:saturated-neighborhood}\label{chlocal:thm:saturated-quotient}\label{chlocal:prop:invariant-descent}
After shrinking $B$, there is a $K_x$-invariant ball $U\subset V_x$, centered at the origin, such that $B\times U\subset\Omega$ and
\begin{equation}\label{chlocal:eq:saturation}
H_x(Q_x\cap U)=q_x^{-1}(q_x(Q_x\cap U)),
\end{equation}
and $q_x(Q_x\cap U)$ is open. There is a reduced Stein neighborhood $\mathcal W_x$ of $[0]$ such that
$$
\widetilde U_x:=q_x^{-1}(\mathcal W_x)\subset H_x(Q_x\cap U).
$$
The product map
$$
q_B=\id_B\times q_x\colon B\times\widetilde U_x\longrightarrow B\times\mathcal W_x
$$
is the analytic Hilbert quotient for the $H_x$-action, where $H_x$ acts trivially on $B$, and is also a topological quotient.

Each fiber contains a unique closed $H_x$-orbit. Two points have the same image exactly when their orbit closures intersect. Invariant holomorphic maps to Hausdorff complex spaces factor uniquely through $q_B$. For every open subset $O\subset B\times\mathcal W_x$, the restriction $q_B^{-1}(O)\to O$ has the same quotient and fiber properties; if $O$ is Stein, then $q_B^{-1}(O)$ is Stein. The quotient germ is $(B\times Y_x,(0,[0]))$ over $B$.
\end{thm}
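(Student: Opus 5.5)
The plan is to reduce everything to standard facts about affine GIT quotients of a reductive group acting linearly, combined with the Kempf--Ness/moment-map description of the $K_x$-invariant ball, and then take the product with $B$ (on which $H_x$ acts trivially).

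\emph{Saturated ball.} I will first establish \eqref{chlocal:eq:saturation} by a Kempf--Ness argument. Fix a $K_x$-invariant Hermitian norm on $V_x$ (the $L^2$ norm on harmonic forms, which is $K_x$-invariant by Lemma~\ref{chlocal:lem:harmonic-equivariance}). Let $U=\{|v|<\delta\}$. I claim $H_x(Q_x\cap U)$ is saturated. Suppose $w\in Q_x$ and the closed orbit $H_x\cdot w_0\subset\overline{H_x w}$ meets $U$. Choose $w_0$ of minimal norm in that closed orbit. It is then a zero of the real moment map, and by the polar decomposition $H_x=K_x\exp(i\Lie K_x)$ and convexity of $t\mapsto|\exp(it\xi)w|^2$, the norm on $\overline{H_xw}$ is minimized on $K_xw_0$, with $|w_0|<\delta$. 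I then need $w$ itself to lie in $H_x(Q_x\cap U)$. For this I use the gradient flow of $-|\cdot|^2$ along $i\Lie K_x$ (equivalently the negative moment map flow). It stays in $Q_x$, is norm decreasing, and converges to a point of $K_xw_0$. A point on the flow line close enough to the limit has norm $<\delta$ and lies in $H_xw$. Conversely, if $w\in H_x(Q_x\cap U)$, its closed orbit has norm $<\delta$ by the same minimality. This gives \eqref{chlocal:eq:saturation}, with saturation characterized as $q_x^{-1}(q_x(Q_x\cap U))$. Openness of $q_x(Q_x\cap U)$ then follows because $q_x$ is a topological quotient for affine GIT and the saturated set $H_x(Q_x\cap U)$ is open, being a union of translates of the open set $Q_x\cap U$. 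I shrink $B$ so that $B\times U\subset\Omega$, shrinking $\delta$ first if necessary. This step, specifically the convergence of the flow and the norm estimate on $\overline{H_xw}$, is where I expect the main work. I would cite Kempf--Ness and Neeman/Sjamaar rather than reprove convergence.

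\emph{Stein neighborhood and product quotient.} Since $Y_x$ is an affine variety, I can choose $\mathcal W_x\subset q_x(Q_x\cap U)$ as the intersection of $Y_x$ with a small polydisc in an embedding $Y_x\hookrightarrow\C^N$. This makes $\mathcal W_x$ Stein and reduced, and $\widetilde U_x=q_x^{-1}(\mathcal W_x)\subset H_x(Q_x\cap U)$. The properties of analytic Hilbert quotients over saturated open sets are standard (Heinzner, Snow): $q_x^{-1}(O)\to O$ is again an analytic Hilbert quotient, the preimage of a Stein set is Stein, each fiber contains a unique closed orbit, and two points have the same image iff their orbit closures meet. For the product with the Stein ball $B$ with trivial action, I need
\[
\mathcal O_{B\times\mathcal W_x}=\bigl((q_B)_*\mathcal O_{B\times\widetilde U_x}\bigr)^{H_x}.
\]
I verify this on Stein opens of the form $B_1\times O$ by averaging over $K_x$ and using Zariski density of $K_x$ in $H_x$. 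Invariant holomorphic functions on $B_1\times q_x^{-1}(O)$ expand locally as convergent series in $b$ whose coefficients are invariant functions on $q_x^{-1}(O)$. Alternatively, I take the analytic Hilbert quotient of the Stein space $B\times\widetilde U_x$ directly and note that its fibers and topology coincide with $\id\times q_x$, which is finite and bijective onto a normal-compatible target; I prefer the first route. Products and restrictions to general opens $O$ follow by covering $O$ with product Stein sets.

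\emph{Universal properties and germ.} The universal factorization for invariant maps to Hausdorff spaces follows from the categorical property of analytic Hilbert quotients, since the target's local functions pull back to invariant functions. Uniqueness follows from surjectivity. The topological quotient property follows from $q_B$ being an analytic Hilbert quotient, which is open on saturated sets. Finally, $B\times\mathcal W_x$ is an open neighborhood of $(0,[0])$ in $B\times Y_x$, which gives the germ statement compatibly with $\operatorname{pr}_B$.
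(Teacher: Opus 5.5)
Your proposal is correct, but it proves by hand the two facts that the paper imports from Heinzner--Huckleberry. The paper gets the saturation identity and the openness of $q_x(Q_x\cap U)$ by observing that $\|v\|^2$ is a $K_x$-invariant strictly plurisubharmonic exhaustion of the cone $Q_x$ and quoting their Propositions~3.1.5--3.1.6 and Corollary~3.1.7. It gets the analytic Hilbert quotient property of $\id_B\times q_x$ from their product results. It proves Steinness of $q_B^{-1}(O)$ for an arbitrary Stein $O$ by identifying it with the closed subspace $Q_x\times_{Y_x}O$ of the Stein space $Q_x\times O$.

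You instead characterize $H_x(Q_x\cap U)$ through Kempf--Ness as the set of $w$ whose closed orbit has minimal norm less than $\delta$. You descend invariant functions on $B_1\times q_x^{-1}(O)$ by Taylor expansion in $b$ with invariant coefficients. Your route is self-contained for linear actions and makes the saturated set explicit. The paper's citations are shorter and apply verbatim to general Stein $K_x^{\mathbb C}$-spaces.

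Some remarks on your version:
\begin{itemize}
\item The moment-map flow is unnecessary. One has $\inf_{H_xw}\|\cdot\|=\min_{\overline{H_xw}}\|\cdot\|$, and a minimizer on $\overline{H_xw}$ is a moment-map zero, so it lies on the unique closed orbit. No convergence theorem is needed, and the step you expected to be the main work disappears.
\item In the Taylor-expansion descent, the Cauchy estimates for the descended coefficients need a compact subset of $q_x^{-1}(O)$ that maps onto a neighborhood of each point of $O$. The Kempf--Ness set works, since $q_x$ is proper on it. The averaging over $K_x$ plays no role, because you are descending functions that are already invariant.
\item Covering a non-product Stein set $O\subset B\times\mathcal W_x$ by product Stein sets does not show that $q_B^{-1}(O)$ is Stein. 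Use the graph argument, as the paper does.
\item Your alternative route through a finite bijective map would require normality of $Y_x$, which is established only later. You were right to prefer the first route.
\end{itemize}
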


\begin{proof}
We choose a $K_x$-invariant Hermitian norm on $V_x$ and a ball $U=\{\|v\|<\rho\}$ such that $B\times U\subset\Omega$. Its squared norm restricts to a proper strictly plurisubharmonic exhaustion of the closed affine cone $Q_x\subset V_x$. By \cite[Propositions~3.1.5--3.1.6, Corollary~3.1.7]{HeinznerHuckleberry1999}, the saturation $H_x(Q_x\cap U)$ is $q_x$-saturated and $q_x(Q_x\cap U)$ is open, which gives \eqref{chlocal:eq:saturation}.

We choose a reduced Stein neighborhood $\mathcal W_x\subset q_x(Q_x\cap U)$ of $[0]$ and set $\widetilde U_x=q_x^{-1}(\mathcal W_x)$. Equation~\eqref{chlocal:eq:saturation} gives $\widetilde U_x\subset H_x(Q_x\cap U)$. Since $H_x$ acts trivially on $B$, the product property for analytic Hilbert quotients shows that $q_B=\id_B\times q_x$ is an analytic Hilbert quotient with invariant direct-image sheaf \cite[Proposition~3.1.2, Corollary~3.1.3, Theorem~3.1.4]{HeinznerHuckleberry1999}. If $O\subset B\times\mathcal W_x$ is Stein, then
$$
q_B^{-1}(O)\simeq Q_x\times_{Y_x}O
=\{(v,(b,y))\in Q_x\times O:q_x(v)=y\}
$$
is a closed analytic subspace of the Stein space $Q_x\times O$, hence is Stein.

The Kempf--Ness description makes $q_B$ a topological quotient and identifies its fibers by orbit closures. Each fiber contains a unique closed orbit. Since $\widetilde U_x$ is saturated, these orbit-closure statements restrict from $Q_x$ to $\widetilde U_x$. An invariant holomorphic map to a Hausdorff complex space is constant on orbit closures; the quotient topology yields a descended continuous map, whose holomorphicity follows from the invariant direct-image property. The same quotient, orbit-closure, and descent statements hold for every restricted map
$$
q_B^{-1}(O)\longrightarrow O,
\qquad O\subset B\times\mathcal W_x\ \text{open}.
$$
\end{proof}

We extend the Kuranishi family $H_x$-equivariantly from $B\times(Q_x\cap U)$ to $B\times H_x(Q_x\cap U)$.

\begin{lem}
\label{chlocal:lem:extended-family}
The map $A$ of the Kuranishi family over $B\times(Q_x\cap U)$ has a unique $H_x$-equivariant extension to $B\times H_x(Q_x\cap U)$, characterized by
$$
\widetilde A(b,v^\sigma)=A(b,v)^\sigma,
\qquad v\in Q_x\cap U,\quad \sigma\in H_x.
$$
It defines a holomorphic Higgs family, is holomorphic with values in the corresponding Sobolev spaces of every finite order, and satisfies
$$
F(b,\widetilde A)=0,
\qquad D_0^*\widetilde A=0,
\qquad \mathsf H_1\widetilde A(b,v)=v.
$$
\end{lem}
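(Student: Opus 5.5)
We define the extension by the displayed formula and first check that it is well defined. Suppose $v_1^{\sigma_1}=v_2^{\sigma_2}$ with $v_i\in Q_x\cap U$ and $\sigma_i\in H_x$. Then $v_2=v_1^{\tau}$ with $\tau=\sigma_1\sigma_2^{-1}$, using the right-action rule $(v^g)^\sigma=v^{g\sigma}$. Both $(b,v_1)$ and $(b,v_1^\tau)$ lie in $\Omega$. Equation~\eqref{chlocal:eq:restricted-equivariance} of Lemma~\ref{chlocal:lem:kuranishi-regularity} therefore gives $A(b,v_2)=A(b,v_1)^\tau$. Applying $\sigma_2$ and \eqref{chlocal:eq:right-action-composition} yields
$$
A(b,v_2)^{\sigma_2}=A(b,v_1)^{\tau\sigma_2}=A(b,v_1)^{\sigma_1}.
$$
Uniqueness of the extension is immediate, since any equivariant extension must satisfy the defining identity.

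The three identities are then checked pointwise. Each $\sigma\in H_x$ is parallel and commutes with $\varphi$ by Lemma~\ref{chlocal:lem:harmonic-equivariance}, so the gauge action in \eqref{chlocal:eq:right-gauge} reduces to conjugation. From $F(b,\alpha^\sigma)=\sigma^{-1}F(b,\alpha)\sigma$ and $F(b,A(b,v))=\kappa(b,v)=0$ on $B\times(Q_x\cap U)$, which holds by Theorem~\ref{chlocal:thm:quadratic-product}, we get $F(b,\widetilde A)=0$. Conjugation commutes with $D_0^*$ and $\mathsf H_1$, so
$$
D_0^*\widetilde A(b,v^\sigma)=0,\qquad \mathsf H_1\widetilde A(b,v^\sigma)=(\mathsf H_1A(b,v))^\sigma=v^\sigma.
$$
Smoothness of the values follows from smoothness of $A$ together with the fact that constant-coefficient conjugation by a parallel endomorphism preserves smoothness.

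The main point is holomorphicity on the reduced space $B\times H_x(Q_x\cap U)$, which is an open subset of $B\times Q_x$ because $H_x(Q_x\cap U)$ is a union of translates of open sets. We argue locally near a point $(b_0,v_0^{\sigma_0})$. On the open set $\sigma_0^{-1}\cdot(Q_x\cap U)$ we may write
$$
\widetilde A(b,w)=A\bigl(b,w^{\sigma_0^{-1}}\bigr)^{\sigma_0}.
$$
Here $w\mapsto w^{\sigma_0^{-1}}$ is a linear automorphism of $V_x$ preserving $Q_x$. The map $A$ is holomorphic into every $W^{m,2}$ on $\Omega$ by Lemma~\ref{chlocal:lem:kuranishi-regularity}, and conjugation by the fixed $\sigma_0$ is a bounded linear map on each $W^{m,2}$. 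The composite is therefore holomorphic into every $W^{m,2}$. These local descriptions agree on overlaps by well-definedness, so $\widetilde A$ is holomorphic.

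It remains to show that $\widetilde A$ defines a holomorphic Higgs family. Locally, $\widetilde A$ is the pullback of $\theta_x$ along the biholomorphism $(b,w)\mapsto(b,w^{\sigma_0^{-1}})$, twisted by the constant gauge $\sigma_0$. Theorem~\ref{chlocal:thm:relative-family} then provides the holomorphic structure, and the determinant trivialization is preserved because $\det\sigma_0=1$. Alternatively, we can repeat the construction in the proof of Theorem~\ref{chlocal:thm:relative-family} verbatim, using $F(b,\widetilde A)=0$ together with Lemma~\ref{chlocal:lem:fiberwise-holomorphicity}.

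The only delicate step is well-definedness. It requires \eqref{chlocal:eq:restricted-equivariance} for the element $\tau$ when both $(b,v_1)$ and $(b,v_1^\tau)$ lie in $\Omega$. This is exactly the domain $\Omega_\tau$ on which that lemma was extended by the identity theorem, so the requirement is met.
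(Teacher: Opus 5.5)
Your proposal is correct and follows essentially the same route as the paper. You define $\widetilde A$ on each translate by $A(b,\widetilde v^{\sigma^{-1}})^\sigma$, check agreement on overlaps via the restricted equivariance \eqref{chlocal:eq:restricted-equivariance} with $\tau=\sigma_1\sigma_2^{-1}$, glue the gauge-transformed copies of $\theta_x$, and obtain the three identities from gauge covariance and Lemma~\ref{chlocal:lem:harmonic-equivariance}; your added details (openness of the translates in $B\times Q_x$, and boundedness on each $W^{m,2}$ of conjugation by a parallel $\sigma_0$) only spell out what the paper leaves implicit.
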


\begin{proof}
We use the right-action convention \eqref{chlocal:eq:right-action-composition}. On each translate $(Q_x\cap U)^\sigma$, we define
$$
\widetilde A(b,\widetilde v)
=A(b,\widetilde v^{\sigma^{-1}})^\sigma.
$$
If $v_1^{\sigma_1}=v_2^{\sigma_2}$, then $v_2=v_1^{\sigma_1\sigma_2^{-1}}$, and restricted equivariance gives
$$
A(b,v_2)^{\sigma_2}
=\bigl(A(b,v_1)^{\sigma_1\sigma_2^{-1}}\bigr)^{\sigma_2}
=A(b,v_1)^{\sigma_1}.
$$
The definitions therefore agree on overlaps and glue to a holomorphic map at every finite Sobolev order. The families obtained by applying these gauge transformations to $\theta_x$ glue to a holomorphic Higgs family on $B\times H_x(Q_x\cap U)$. Pullback to any reduced analytic base preserves holomorphicity. Gauge covariance gives the equation for $F$, while Lemma~\ref{chlocal:lem:harmonic-equivariance} gives the identities involving $D_0^*$ and $\mathsf H_1$.
Equivariance determines the extension uniquely.
\end{proof}

The stabilizer identification persists on the extended family.

\begin{cor}\label{chlocal:cor:extended-stabilizers}
For the extended family $\widetilde\theta_x$ on $B\times\widetilde U_x$, the equality $\widetilde\theta_x(b,v_1)^g=\widetilde\theta_x(b,v_2)$ holds exactly when $g\in H_x$ and $v_2=v_1^g$. Thus $\Stab_{\mathcal G_{k+1}^{\mathbb C}}\widetilde\theta_x(b,v)=\Stab_{H_x}(v)$.
\end{cor}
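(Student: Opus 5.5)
The plan is to reduce to Corollary~\ref{chlocal:cor:local-stabilizers} by translating both points into $B\times(Q_x\cap U)$ with elements of $H_x$. Since $\widetilde U_x\subset H_x(Q_x\cap U)$ by Theorem~\ref{chlocal:thm:saturated-quotient}, write $v_1=w_1^{\sigma_1}$ and $v_2=w_2^{\sigma_2}$ with $w_i\in Q_x\cap U$ and $\sigma_i\in H_x$. Lemma~\ref{chlocal:lem:extended-family} gives
$$
\widetilde\theta_x(b,v_i)=\theta_x(b,w_i)^{\sigma_i}.
$$
Here I use that the extended Higgs pair is $(b,\widetilde A(b,v_i))=(b,A(b,w_i)^{\sigma_i})$. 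By Lemma~\ref{chlocal:lem:harmonic-equivariance}, the $H_x$-action on pairs is plain conjugation for every $b$.

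\emph{Forward direction.} Suppose $\widetilde\theta_x(b,v_1)^g=\widetilde\theta_x(b,v_2)$. Using the right-action rule \eqref{chlocal:eq:right-action-composition}, this becomes
$$
\theta_x(b,w_1)^{\sigma_1g\sigma_2^{-1}}=\theta_x(b,w_2).
$$
Since $\sigma_1g\sigma_2^{-1}\in\mathcal G^{\mathbb C}_{k+1}$, Corollary~\ref{chlocal:cor:local-stabilizers} applies. It is valid because $B\times U$ is the product domain of that corollary; if necessary, shrink $B$ and $U$ at the outset so that Theorem~\ref{chlocal:thm:saturated-quotient} is applied with that domain. The corollary then gives $h:=\sigma_1g\sigma_2^{-1}\in H_x$ and $w_2=w_1^{h}$. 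Hence $g=\sigma_1^{-1}h\sigma_2\in H_x$. Moreover,
$$
v_2=w_2^{\sigma_2}=w_1^{\sigma_1 g}=(w_1^{\sigma_1})^g=v_1^g.
$$

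\emph{Converse.} If $g\in H_x$ and $v_2=v_1^g$, equivariance of $\widetilde A$ from Lemma~\ref{chlocal:lem:extended-family} gives
$$
\widetilde A(b,v_1^g)=\widetilde A(b,v_1)^g.
$$
This requires $v_1^g\in\widetilde U_x$, which holds because $\widetilde U_x=q_x^{-1}(\mathcal W_x)$ is $H_x$-invariant, and because $\widetilde A$ is defined on all of $B\times H_x(Q_x\cap U)$ with $\widetilde A(b,w^{\sigma\tau})=A(b,w)^{\sigma\tau}$. The stabilizer identity follows by taking $v_1=v_2$.

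The main obstacle is the hidden requirement that the product domain $B\times U$ used in Theorem~\ref{chlocal:thm:saturated-quotient} satisfies the norm bound $\|A(b,v)\|_{k,2}<\epsilon$ and $B\subset B'$ of Corollary~\ref{chlocal:cor:local-stabilizers}. I would handle this by first fixing $B'$ and $\epsilon$ from Proposition~\ref{chlocal:prop:local-gauge-uniqueness}. Next, choose $B\times U$ inside the corollary's domain using continuity of $A$ and $A(0,0)=0$. Then run the saturation construction with this $U$; the saturation theorem only needs a $K_x$-invariant ball, so any smaller radius works. No norm bound on $\widetilde A$ itself is needed, since all gauge comparisons are pulled back to the original slice.
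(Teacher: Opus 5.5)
Your proposal is correct and is essentially the paper's proof: write $v_i=w_i^{\sigma_i}$ with $w_i\in Q_x\cap U$, apply Corollary~\ref{chlocal:cor:local-stabilizers} to $\sigma_1g\sigma_2^{-1}$, and use the $H_x$-equivariance of $\widetilde A$ for the converse. Your remark about choosing $B\times U$ inside the domain of that corollary before running the saturation construction only makes explicit a compatibility of choices that the paper leaves implicit.
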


\begin{proof}
We write $v_i=\widetilde v_i^{\sigma_i}$ with $\widetilde v_i\in Q_x\cap U$ and $\sigma_i\in H_x$. If
$$
\widetilde\theta_x(b,v_1)^g=\widetilde\theta_x(b,v_2),
$$
then
$$
\theta_x(b,\widetilde v_1)^{\sigma_1g\sigma_2^{-1}}
=\theta_x(b,\widetilde v_2).
$$
We set $\widehat g=\sigma_1g\sigma_2^{-1}$. Corollary~\ref{chlocal:cor:local-stabilizers} gives $\widehat g\in H_x$ and $\widetilde v_2=\widetilde v_1^{\widehat g}$. Consequently,
$$
g=\sigma_1^{-1}\widehat g\sigma_2\in H_x,
\qquad
v_2=\widetilde v_2^{\sigma_2}
=\widetilde v_1^{\widehat g\sigma_2}
=\widetilde v_1^{\sigma_1g}
=v_1^g.
$$
The converse follows from the $H_x$-equivariance of the extended family. Taking $v_1=v_2$ gives the stabilizer equality. Every such gauge lies in $H_x$ and is smooth and parallel.
\end{proof}

We next identify the closed $H_x$-orbits in the Kuranishi model with polystable Higgs bundles and compare the resulting quotient with the unitary-gauge quotient.
\section{Polystability and the gauge-quotient topology}
\label{sec:tech-chtopology}
\subsection{The real moment map}\label{chtopology:sec:topology}

We fix a polystable point $x$ and the map $A(b,v)$ defining the relative Kuranishi family in Section~\ref{sec:tech-chlocal}. We write
$$
V_x=H^1,\qquad
H_x=K_x\exp(i\fk_x),\qquad
Z_x=B\times(Q_x\cap U).
$$
The map $A|_{Z_x}$ determines the holomorphic Higgs family $\theta_x$. Its equivariant extension $\widetilde\theta_x$ is defined on $B\times\widetilde U_x$, where
$$
\widetilde U_x=q_x^{-1}(\mathcal W_x)
$$
and $\mathcal W_x\subset Y_x$ is the Stein neighborhood chosen in Theorem~\ref{chlocal:thm:saturated-quotient}.

Section~\ref{sec:tech-chlocal} uses the right action $v^g=g^{-1}vg$. For the Hamiltonian formulas, we use the associated left action
$$
g\cdot v:=v^{g^{-1}}=gvg^{-1},
\qquad \xi_V(v)=[\xi,v].
$$
The right gauge action on connection--Higgs pairs satisfies
$$
\bigl((\nabla,\varphi)^g\bigr)^u=(\nabla,\varphi)^{gu}.
$$
We fix the area form $\omega_\Sigma$ at $b=0$ and choose the surface metrics
$$
g_{\Sigma,b}(u,w)=\omega_\Sigma(u,J_bw).
$$
Their Hodge stars on zero-forms and two-forms agree and are denoted by $*$. Let $\fg_E$ be the bundle of trace-free $h$-skew-Hermitian endomorphisms. We denote by
$$
(\nabla(b,v),\varphi(b,v))
$$
the unitary connection--Higgs pair determined by $A(b,v)$ and $h$, and we set
$$
\boldsymbol\mu_{\R}(\nabla,\varphi)=F_\nabla+[\varphi,\varphi^{*h}].
$$
Equivariance of $A$ and uniqueness of the unitary connection inducing a fixed Dolbeault operator give
$$
(\nabla(b,u\cdot v),\varphi(b,u\cdot v))
=(\nabla(b,v),\varphi(b,v))^{u^{-1}},
\qquad u\in K_x.
$$

At the Hitchin solution representing $x$, we set $d_1\xi=(d_{\nabla_h}\xi,[\varphi,\xi])$ and $d_2=d\boldsymbol\mu_{\R}$. The moment-map identity and elliptic Hodge theory give
$$
d_2^*=-I_0d_1*,\qquad
\cH_{\R,x}^2:=\ker d_2^*=*\fk_x,
$$
where $I_0$ is the complex structure on the tangent space at this Hitchin solution. All operators preserve the trace-free summand. For every $s\geq0$, elliptic Hodge theory gives an isomorphism
$$
d_2d_2^*:
(\cH_{\R,x}^2)^\perp\cap\W{s+2}\Omega^2(\fg_E)
\xrightarrow{\ \sim\ }
(\cH_{\R,x}^2)^\perp\cap\W{s}\Omega^2(\fg_E);
$$
see \cite[proof of Lemma~3.5]{Fan2020Construction}. We write $\mathsf H_{\R,x}$ for the fixed orthogonal projection onto $\cH_{\R,x}^2$ and reserve $H^2$ for the complex obstruction space defining $Q_x$.

The bounded inverse of $d_2d_2^*$ allows us to solve for the component of the real moment map orthogonal to $\cH_{\R,x}^2$.

\begin{prop}
\label{chtopology:prop:real-moment-reduction}
After replacing $B$ and $U$ by neighborhoods whose closures lie in the original domains, there are $\varepsilon>0$ and a unique smooth map
$$
\beta:B\times U\longrightarrow
(\cH_{\R,x}^2)^\perp\cap\W{k+1}\Omega^2(\fg_E)
$$
such that $\beta(0,0)=0$, $\|\beta(b,v)\|_{\W{k+1}}<\varepsilon$, and
$$
(1-\mathsf H_{\R,x})\boldsymbol\mu_{\R}
\left((\nabla(b,v),\varphi(b,v))^{\exp(-i*\beta(b,v))}\right)=0.
$$
We set
$$
\begin{gathered}
g^+(b,v)=\exp(-i*\beta(b,v)),\\
(\nabla^+(b,v),\varphi^+(b,v))
=(\nabla(b,v),\varphi(b,v))^{g^+(b,v)},\\
\nu_{\R}(b,v)=\boldsymbol\mu_{\R}(\nabla^+(b,v),\varphi^+(b,v)).
\end{gathered}
$$
Then $\nu_{\R}$ takes values in $\cH_{\R,x}^2$. After this restriction of $B\times U$, the maps $\beta$, $g^+$, $(\nabla^+,\varphi^+)$, and $\nu_{\R}$ are $K_x$-equivariant and smooth with values in the corresponding Sobolev spaces of every finite order. Their germs agree at different Sobolev indices, and $d_v\beta(0,0)=0$.
\end{prop}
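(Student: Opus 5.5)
Apply the Banach-space implicit function theorem to
$$
\mathcal R(b,v,\beta)=(1-\mathsf H_{\R,x})\,\boldsymbol\mu_{\R}\bigl((\nabla(b,v),\varphi(b,v))^{\exp(-i*\beta)}\bigr),
$$
viewed as a map on $B\times U\times\bigl((\cH_{\R,x}^2)^\perp\cap\W{k+1}\Omega^2(\fg_E)\bigr)$ with values in $(\cH_{\R,x}^2)^\perp\cap\W{k-1}\Omega^2(\fg_E)$. The map is real-analytic, in particular smooth. Indeed, $(b,v)\mapsto(\nabla(b,v),\varphi(b,v))$ is holomorphic into every $W^{m,2}$ by Lemma~\ref{chlocal:lem:kuranishi-regularity}, composed with the smooth change to unitary coordinates (smooth in $b$, bounded at each Sobolev order). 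Moreover, $\beta\mapsto\exp(-i*\beta)\in\W{k+1}$ is analytic because $W^{k+1,2}$ is a Banach algebra for $k\geq4$. Finally, $\boldsymbol\mu_{\R}$ is a polynomial differential expression of first order. At $(0,0,0)$ the pair is the Hitchin solution, so $\mathcal R(0,0,0)=0$.

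The key step is the $\beta$-derivative. For the Hermitian gauge $\exp(-i*\beta)$, the variation of the pair is $-I_0d_1(*\beta)$ up to the sign convention. Hence
$$
\partial_\beta\mathcal R(0,0,0)=(1-\mathsf H_{\R,x})\,d_2(-I_0d_1*)\beta=d_2d_2^*\beta,
$$
using the identity $d_2^*=-I_0d_1*$ recorded above. By the stated elliptic isomorphism with $s=k-1$, this is an isomorphism between the orthogonal complements. The implicit function theorem then gives $\varepsilon>0$, neighbourhoods, and a unique smooth $\beta$ with $\beta(0,0)=0$ and $\|\beta\|_{\W{k+1}}<\varepsilon$. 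Shrinking $B$ and $U$ to neighbourhoods with closures inside the original domains keeps all later estimates uniform. With $\beta$ so defined, $\nu_{\R}=\boldsymbol\mu_{\R}(\nabla^+,\varphi^+)$ lies in $\cH_{\R,x}^2$ by construction.

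For $d_v\beta(0,0)$, differentiate $\mathcal R(0,v,\beta(0,v))=0$ at $v=0$, which gives $d_2d_2^*\,d_v\beta=-(1-\mathsf H_{\R,x})d_2(\dot\nabla,\dot\varphi)$. At $b=0$, $dA_0(0,\dot v)=\dot v$ is harmonic, so the corresponding unitary tangent vector lies in $\ker d_2$ as well as in $\ker d_1^*$ and $\ker d_1^*I_0$. This is the linearization of the hyperkähler moment maps, and on harmonic $H^1$ representatives the real moment map has vanishing derivative. Hence $d_v\beta(0,0)=0$.

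For $K_x$-equivariance, take $u\in K_x$. Then $u$ is parallel and unitary, so it commutes with $*$, $d_1$, $d_2$ and $\mathsf H_{\R,x}$ by Lemma~\ref{chlocal:lem:harmonic-equivariance}. It also preserves the norm balls. Choosing $U$ to be a $K_x$-invariant ball, the displayed equivariance of $(\nabla,\varphi)$ together with $u^{-1}\exp(-i*\beta)u=\exp(-i*u^{-1}\beta u)$ shows that $u^{-1}\beta(b,v)u$ solves the equation at $(b,u\cdot v)$ (with the appropriate side of conjugation). Uniqueness in the $\varepsilon$-ball then gives $\beta(b,u\cdot v)=u^{-1}\beta(b,v)u$, and equivariance of $g^+$, $(\nabla^+,\varphi^+)$ and $\nu_{\R}$ follows.

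For higher regularity and independence of the Sobolev index, argue as in Lemma~\ref{chlocal:lem:kuranishi-regularity}. Run the implicit function theorem at each index $m\geq k$ on the same orthogonal complements; uniqueness at the lower index identifies the germs. Bootstrapping the elliptic equation $d_2d_2^*\beta=\text{(lower order nonlinear terms)}$ gives uniform $W^{m,2}$ bounds on compact sets, and interpolation then gives smoothness into every $W^{m,2}$.

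The main obstacle is this last point combined with keeping one common domain. A single shrinking of $B\times U$ must serve all Sobolev orders. To arrange this, I would solve once at order $k$ and then show by the a priori bootstrap that the order-$k$ solution is already smooth with locally uniform bounds. This avoids shrinking further for each $m$.
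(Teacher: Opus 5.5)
Your implicit-function setup, the identification $\partial_\beta\mathcal R(0,0,0)=d_2d_2^*$, the proof of $d_v\beta(0,0)=0$ via harmonic representatives lying in $\ker d_2$, and the $K_x$-equivariance via uniqueness on a $K_x$-invariant ball all coincide with the paper's argument. The gap is in the final regularity step.

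The statement requires $\beta$ to be \emph{smooth} into $\W{m}$ for every $m$ on one fixed domain. Uniform $\W{m}$ bounds on compact sets plus interpolation only give \emph{continuity} into $\W{m}$. In Lemma~\ref{chlocal:lem:kuranishi-regularity} the corresponding step was closed by the Banach-valued Cauchy formula, which is not available for a merely real-smooth $\beta$. You would also need uniform bounds for all parameter derivatives of $\beta$, obtained by differentiating the equation.

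The paper handles the common-domain problem you flagged differently. The initial shrinking is chosen so that $\|(d_2d_2^*)^{-1}(\partial_\beta\mathcal R-d_2d_2^*)\|<\tfrac12$ on the whole domain. The linearization is therefore invertible at \emph{every} solution there, not only at the origin. Since it is elliptic of order two modulo a smooth finite-rank operator, it remains invertible at all higher Sobolev orders. Once each $\beta(b,v)$ is known to be smooth on $\Sigma$, the implicit function theorem applies at each point and each order. Local uniqueness at the lower order then identifies the resulting maps with $\beta$. This gives smoothness into every $\W{m}$ without further shrinking.

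For the pointwise smoothness itself, $\mathcal R(b,v,\beta)-d_2d_2^*\beta$ is not of lower order. The dependence of $J_b$ on $b$ and the nonlinearity of $\exp(-i*\beta)$ both contribute second derivatives of $\beta$ with small coefficients. A direct bootstrap must therefore handle a quasilinear, uniformly elliptic equation. This is possible since $\beta\in\W{k+1}\subset C^{k-1}$, but the paper avoids it. It passes to the metric $h'=h((g^+)^{-1}\cdot,(g^+)^{-1}\cdot)$, which satisfies $F_{h'}+[\varphi(b,v),\varphi(b,v)^{*h'}]=g^+\nu_{\R}(b,v)(g^+)^{-1}$. The right-hand side is smooth because $\nu_{\R}$ lies in the finite-dimensional space $\cH^2_{\R,x}$. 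In a local holomorphic frame, the formula $\bar\partial(H^{-1}\partial H)$ turns this into a semilinear elliptic equation for the matrix $H$ of $h'$, which bootstraps directly.
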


\begin{proof}
We apply the parameter-dependent implicit function theorem to
$$
\mathcal L(b,v,\beta)=(1-\mathsf H_{\R,x})
\boldsymbol\mu_{\R}\left((\nabla(b,v),\varphi(b,v))^{\exp(-i*\beta)}\right).
$$
Sobolev multiplication and the mapping properties of the gauge action show that
$$
\mathcal L:B\times U\times
\bigl((\cH_{\R,x}^2)^\perp\cap W^{k+1,2}\Omega^2(\fg_E)\bigr)
\longrightarrow
(\cH_{\R,x}^2)^\perp\cap W^{k-1,2}\Omega^2(\fg_E)
$$
is smooth. Its derivative in the last variable is
$$
D_\beta\mathcal L(0,0,0)\dot\beta
=(1-\mathsf H_{\R,x})d_2(-I_0d_1*\dot\beta)
=d_2d_2^*\dot\beta.
$$
The inverse of $d_2d_2^*$ acts from $(\cH_{\R,x}^2)^\perp\cap\W{k-1}$ to $(\cH_{\R,x}^2)^\perp\cap\W{k+1}$. We shrink $B\times U$ so that
$$
\bigl\|(d_2d_2^*)^{-1}(D_\beta\mathcal L-d_2d_2^*)\bigr\|<\tfrac12
$$
holds; the linearized inverses are then uniformly bounded. We choose a $K_x$-invariant Hermitian norm on $V_x$ and take $U$ to be a $K_x$-invariant ball. Equivariance of $\mathcal L$ and uniqueness of its zero in the $\varepsilon$-ball give equivariance of $\beta$. The exponent $-i*\beta$ is Hermitian and trace-free, so $g^+$ is positive and has determinant one. The derivative of $(\nabla(b,v),\varphi(b,v))$ with respect to $v$ at $(0,0)$ is the inclusion of harmonic representatives and lies in $\ker d_2$. Differentiating the equation for $\mathcal L$ gives
$$
d_2d_2^*d_v\beta(0,0)=0.
$$
Thus $d_v\beta(0,0)=0$, and $d_v(\nabla^+,\varphi^+)(0,0)$ is the inclusion of harmonic representatives.

To prove smoothness on $\Sigma$, we transport the metric by $g^+$ and define
$$
h'(e_1,e_2)=h\bigl((g^+)^{-1}e_1,(g^+)^{-1}e_2\bigr).
$$
It satisfies
$$
F_{h'}+[\varphi(b,v),\varphi(b,v)^{*h'}]
=g^+\nu_{\R}(b,v)(g^+)^{-1}.
$$
The form $\nu_{\R}(b,v)$ has smooth coefficients because it belongs to the fixed finite-dimensional space $\cH_{\R,x}^2$, and the coefficients of the original family are smooth by Section~\ref{sec:tech-chlocal}. In a local holomorphic frame, the curvature formula $\bar\partial(H^{-1}\partial H)$ expresses the second derivatives of the matrix $H$ of $h'$ in terms of its first derivatives and smooth lower-order coefficients. Starting from $H\in\W{k+1}$, elliptic regularity and iteration give $C^\infty$ regularity on $\Sigma$ for $h'$, $g^+$, and $\beta$.

At each solution, $D_\beta\mathcal L$ is elliptic of order two modulo a smooth finite-rank operator. The inverse at order $k-1$ and elliptic regularity give bounded inverses at every higher Sobolev order. Applying the implicit function theorem at these orders and using uniqueness at order $k-1$ yields the same map $\beta$ on the restricted product $B\times U$. The assertions for $g^+$, $(\nabla^+,\varphi^+)$, and $\nu_{\R}$ follow.
\end{proof}

We denote by $\Omega_{I,b}$ the $L^2$ K\"ahler form on the space of unitary connection--Higgs pairs, with the normalization and right-action convention of \cite[Section~3.1]{Fan2020Construction}. Its pullback along $v\mapsto(\nabla^+(b,v),\varphi^+(b,v))$ is denoted by $\omega_b$.
Equivariance and the moment-map identity give
$$
d_v\langle\nu_{\R},\xi\rangle
=\iota_{[v,\xi]}\omega_b,
\qquad
\langle\upsilon,\xi\rangle=\int_\Sigma\tr(\xi\upsilon).
$$
The forms $\omega_b$ are closed. Since $\omega_0(0)$ is the K\"ahler form induced by the $L^2$ inner product on $H^1$, they are nondegenerate on a product neighborhood of $(0,0)$.

The next lemma determines the zero locus of the real moment map along $B\times\{0\}$.

\begin{lem}
\label{chtopology:lem:zero-section}
For every $b\in B$, one has $\nu_{\R}(b,0)=0$. For $(b,v)\in Z_x$, the pair $(\nabla^+(b,v),\varphi^+(b,v))$ satisfies the Hitchin equations exactly when $\nu_{\R}(b,v)=0$.
\end{lem}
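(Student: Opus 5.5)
The second assertion is immediate from the construction, so the work lies in the first, and for that I will use the $K_x$-equivariance of Proposition~\ref{chtopology:prop:real-moment-reduction}.

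\emph{Second assertion.} For $(b,v)\in Z_x$, the relative equation $F(b,A(b,v))=0$ holds, so the Dolbeault operator and Higgs field of $(\nabla(b,v),\varphi(b,v))$ are holomorphic for $J_b$. The complex gauge $g^+(b,v)$ preserves the equation $\bar\partial_{\nabla}\varphi=0$ by the covariance $F(b,\alpha^g)=g^{-1}F(b,\alpha)g$ of Proposition~\ref{chlocal:prop:relative-equations}. Hence $(\nabla^+,\varphi^+)$ satisfies the Hitchin equations exactly when $\boldsymbol\mu_{\R}(\nabla^+,\varphi^+)=\nu_{\R}(b,v)$ vanishes.

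\emph{First assertion: strategy.} At $v=0$ the pair $(\nabla(b,0),\varphi(b,0))$ is fixed by the unitary gauges in $K_x$, because $A(b,0)^\sigma=A(b,\sigma^{-1}0\sigma)=A(b,0)$ by \eqref{chlocal:eq:restricted-equivariance}. By $K_x$-equivariance of $\beta$, $g^+$ and $\nu_{\R}$, the value $\nu_{\R}(b,0)\in\cH^2_{\R,x}=*\fk_x$ is then fixed under the adjoint action of $K_x$. That alone does not force vanishing, since the center of $\fk_x$ may be nonzero; for instance the $\mathrm{U}(1)$ factors of the blocks in a polystable decomposition survive the determinant condition. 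So I add a trace/degree argument.

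\emph{First assertion: pairing with the center.} An element $\xi\in\fk_x$ is parallel and commutes with $\varphi$ and $\varphi^{*h}$ by Lemma~\ref{chlocal:lem:harmonic-equivariance}. I will pair $\nu_{\R}(b,0)$ with central $\xi$, and more generally with arbitrary $\xi\in\fk_x$, via $\int_\Sigma\tr(\xi\,\nu_{\R})$.

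1. Since $\nu_{\R}(b,0)$ lies in $*\fk_x$, it is determined by these pairings. Write $\nu_{\R}(b,0)=*\eta$ with $\eta\in\fk_x$; then pairing with $\xi=\eta$ gives $-\|\eta\|^2$ up to sign.

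2. At $v=0$, $\xi$ splits $E$ into $h$-orthogonal eigenbundles. By $K_x$-invariance at $v=0$, these eigenbundles are preserved by $\nabla^+(b,0)$ and $\varphi^+(b,0)$; indeed $\xi$ commutes with $g^+$, which lies in the exponential of the $K_x$-fixed part.

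3. On each eigenbundle $E_\lambda$, $\int\tr(\xi[\varphi,\varphi^*])$ reduces to $\lambda\int\tr_{E_\lambda}[\varphi_\lambda,\varphi_\lambda^*]=0$. Also $\int\tr(\xi F)=\lambda\cdot 2\pi i\deg(E_\lambda)$ up to normalization.

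4. The degrees of the $E_\lambda$ are topological. They are constant in $b$ and vanish at $b=0$, because the Hitchin solution is degree-zero polystable and the eigenbundles are sums of the stable factors $E_\alpha\otimes\C^{m_\alpha}$.

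Hence every pairing vanishes and $\nu_{\R}(b,0)=0$.

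\emph{Main obstacle.} The delicate point is step 2: justifying that at $v=0$ the whole family $(\nabla^+(b,0),\varphi^+(b,0))$ commutes with every $\xi\in\fk_x$, not merely that it is fixed up to gauge. I would get this from uniqueness in the implicit function theorem. The gauges $\sigma\in K_x$ act by conjugation, fix $(\nabla(b,0),\varphi(b,0))$ exactly, and fix $\beta(b,0)$ by equivariance. The resulting pair is therefore literally $\sigma$-invariant, and differentiating in $\sigma$ gives commutation with $\fk_x$.

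An alternative for steps 3 and 4 is to argue via the Chern–Weil formula directly with $\tr(\xi F)$, using that $\xi$ is parallel for $\nabla^+$.
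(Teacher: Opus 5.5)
Your proof is correct and follows essentially the same route as the paper. In both arguments, $K_x$-invariance of $(\nabla^+(b,0),\varphi^+(b,0))$ makes every $\xi\in\fk_x$ parallel and commuting with $\varphi^+$ and its adjoint; a Chern--Weil argument anchored at the Hitchin solution $x$ then kills $\int_\Sigma\tr(\xi\,\nu_{\R}(b,0))$, and nondegeneracy of the pairing on $*\fk_x$ finishes. The only difference is bookkeeping: the paper uses the transgression identity $\tr\bigl(\xi(F_{\nabla_h+\delta_b}-F_{\nabla_h})\bigr)=d\tr(\xi\delta_b)$ and Stokes, whereas you split $E$ into $\xi$-eigenbundles and use that their degrees are topological and zero. (These eigenbundles are sums of copies of the degree-zero stable summands $E_\alpha$, not of the full blocks $E_\alpha\otimes\C^{m_\alpha}$, but the conclusion is unchanged.)
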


\begin{proof}
Equivariance makes $(\nabla^+(b,0),\varphi^+(b,0))=(\nabla_h+\delta_b,\varphi_b^+)$ fixed by $K_x$. For $\xi\in\fk_x$, the identity $\nabla_h\xi=0$ and $K_x$-invariance give $[\delta_b,\xi]=[\varphi_b^+,\xi]=[(\varphi_b^+)^{*h},\xi]=0$, and hence
$$
\tr\bigl(\xi(F_{\nabla_h+\delta_b}-F_{\nabla_h})\bigr)
=d\tr(\xi\delta_b),\qquad
\tr(\xi[\varphi_b^+,(\varphi_b^+)^{*h}])=0.
$$
The quadratic curvature term pairs to zero because
$$
\tr(\xi[\delta_{b,1},\delta_{b,2}])
=\tr([\xi,\delta_{b,1}]\delta_{b,2})=0.
$$
The Hitchin equation at $x$ and Stokes' theorem imply $\int_\Sigma\tr(\xi\nu_{\R}(b,0))=0$. The pairing with $\fk_x$ is nondegenerate on $\cH_{\R,x}^2=*\fk_x$, so $\nu_{\R}(b,0)=0$. Since $B\times\{0\}\subset Z_x$ and complex gauge transformations preserve the holomorphic Higgs equation, $(\nabla^+(b,0),\varphi^+(b,0))$ is a Hitchin solution.
\end{proof}

\subsection{Moment-map flow and polystability}

We denote the real $L^2$ inner product on $V_x$ by $\langle\ ,\ \rangle_0$, let $I$ be multiplication by $i$, and choose an invariant inner product identifying $\fk_x^*$ with $\fk_x$ by $\#$. In the left-action convention, we set
$$
\langle\ell_b(v),\xi\rangle=-\int_\Sigma\tr(\xi\nu_{\R}(b,v)),
\qquad f_b(v)=\tfrac12\|\ell_b(v)\|^2.
$$
Then $d\langle\ell_b,\xi\rangle=\iota_{\xi_V}\omega_b$ and $\ell_b(0)=0$. The quadratic moment map for the linear $K_x$-action on $(V_x,\omega_{\mathrm{lin}})$ is denoted by $\ell_{\mathrm{lin}}$ and is given by
$$
\omega_{\mathrm{lin}}(u,w)=\langle Iu,w\rangle_0,\qquad
\langle\ell_{\mathrm{lin}}(v),\xi\rangle
=\tfrac12\omega_{\mathrm{lin}}(\xi_V(v),v).
$$
From now on, $B$ denotes a relatively compact neighborhood of the origin on which Proposition~\ref{chtopology:prop:real-moment-reduction} holds.

We first obtain moment-map estimates that are uniform for $b$ in a relatively compact subset of the Teichm\"uller chart.

\begin{lem}
\label{chtopology:lem:moment-estimates}
There are $\rho_2>0$ and positive constants $c_0,C_0,C$, independent of $b$, such that the $K_x$-invariant ball
$$
U_2=\{v\in V_x:\|v\|<\rho_2\}
$$
has closure in $U$ and, for all $b\in B$, $v\in U_2$, and $u\in V_x$,
$$
\omega_b(v)(u,Iu)\geq c_0\|u\|^2,\qquad
\|\omega_b(v)\|\leq C_0,\qquad
f_b(v)^{3/4}\leq C\|(\ell_b(v)^\#)_V(v)\|.
$$
\end{lem}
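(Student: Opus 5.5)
The plan has three parts. The first two bounds come from continuity and compactness at $(0,0)$. The gradient-type estimate is a Łojasiewicz inequality, made uniform in $b$ by comparing $\ell_b$ with the quadratic model $\ell_{\mathrm{lin}}$.

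\emph{Bounds on $\omega_b$.} By Proposition~\ref{chtopology:prop:real-moment-reduction}, the map $(b,v)\mapsto(\nabla^+,\varphi^+)$ is smooth into every $W^{m,2}$, and $\Omega_{I,b}$ depends smoothly on $b$ through $J_b$. Hence $(b,v)\mapsto\omega_b(v)$ is continuous on $\overline B\times\overline U$. At $(0,0)$, $\omega_0(0)$ is the $L^2$ Kähler form on $H^1$, so $\omega_0(0)(u,Iu)=\|u\|^2$. Shrinking $B$ and choosing $\rho_2$ small gives $\omega_b(v)(u,Iu)\geq\tfrac12\|u\|^2$ and a uniform bound $C_0$. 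Here one uses that $B$ is relatively compact and that $B$ may be replaced by a smaller neighborhood, as the paper already allows. Uniformity over a fixed relatively compact $B$ would need a continuity and compactness argument at every point $(b,0)$, not only at $(0,0)$.

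\emph{Reduction to a model.} For the third inequality, I would expand $\ell_b(v)=\ell_{\mathrm{lin}}(v)+R_b(v)$. By Lemma~\ref{chtopology:lem:zero-section}, $\ell_b(0)=0$. The derivative $d_v\ell_b(0)$ pairs $\xi$ with $\iota_{\xi_V(0)}\omega_b=0$, since $\xi_V(0)=[\xi,0]=0$. So $\ell_b$ vanishes to second order at $v=0$ for every $b$. Its second-order term is the quadratic moment map of the linear $K_x$-action for the constant form $\omega_b(0)$. That form is close to $\omega_{\mathrm{lin}}$ and $K_x$-invariant. After a $b$-dependent linear $K_x$-equivariant Darboux change of coordinates $P_b$, smooth in $b$ with $P_0=\mathrm{id}$, it becomes $\ell_{\mathrm{lin}}$. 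Equivariant Darboux (Moser) with parameters in fact makes $\ell_b\circ\Phi_b=\ell_{\mathrm{lin}}$ exactly, for $K_x$-equivariant diffeomorphisms $\Phi_b$ fixing $0$ with $d\Phi_b(0)$ uniformly close to the identity. The Moser vector field is built from $\omega_b-\omega_{\mathrm{lin}}$, averaged over $K_x$, and is smooth in $b$. Since the moment map is normalized by $\ell_b(0)=0$ and $K_x$ acts linearly with fixed point $0$, moment maps match after the equivariant symplectomorphism, by uniqueness of moment maps up to constants; note $K_x$ may be nonabelian, and the constant is central and zero here. Under $\Phi_b$, the vector field $(\ell^\#)_V$ transforms by $d\Phi_b$, whose norm is bounded above and below uniformly. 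Therefore it suffices to prove
$$
\|\ell_{\mathrm{lin}}(v)\|^{3/2}\leq C\|(\ell_{\mathrm{lin}}(v)^\#)_V(v)\|
$$
on a fixed ball, and then shrink $\rho_2$ so that $\Phi_b^{-1}(U_2)$ lies in that ball for all $b$.

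\emph{The model inequality.} For the linear quadratic moment map this is the standard Łojasiewicz gradient inequality with exponent $3/4$ for $f=\tfrac12\|\ell_{\mathrm{lin}}\|^2$. With $f=\tfrac12\|\ell\|^2$, it reads $f^{3/4}\lesssim\|\ell\|^{3/2}$ against $\|\nabla f\|=\|I(\ell^\#)_V\|$. I would prove it by homogeneity. Both sides are homogeneous of degree $3$ in $v$: $\ell_{\mathrm{lin}}$ is quadratic and $(\xi)_V(v)$ is linear in $v$. So it is enough to check it on the unit sphere. There the right side vanishes only where $(\ell^\#)_V(v)=0$. At such points, pairing gives
$$
0=\omega_{\mathrm{lin}}\bigl((\ell^\#)_V(v),v\bigr)=2\|\ell(v)\|^2,
$$
so the left side vanishes as well. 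Hence the ratio of the left side to the right side is bounded on the complement of the common zero set. The \textbf{main obstacle} is showing this ratio is bounded near the common zero set, which is where Łojasiewicz is genuinely needed. I would invoke the Łojasiewicz inequality for the real-analytic function $f$ on the compact sphere. The exponent $3/4$ is exactly what the homogeneity of a degree-$4$ polynomial yields: the classical inequality $\|\nabla f\|\geq c f^{1-\theta}$ at the critical value $0$ gives $\theta=1/4$ for homogeneous quartics with $\|\nabla f\|$ of degree $3$, a standard fact, for example in Fan's fixed-curve argument. Alternatively, the paper likely cites \cite{Fan2020Construction} for this inequality at $b=0$. My contribution would be the uniformity in $b$ through the equivariant Moser reduction above.
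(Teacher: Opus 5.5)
Your architecture matches the paper's. The two bounds on $\omega_b$ follow from continuity and compactness. An equivariant Moser isotopy fixing $0$, with uniform derivative bounds, gives $\ell_b\circ\Psi_{b,1}=\ell_{\mathrm{lin}}$, because both sides are equivariant moment maps for $\omega_{\mathrm{lin}}$ vanishing at $0$. A gradient inequality for $\tfrac12\|\ell_{\mathrm{lin}}\|^2$ then transfers to $\ell_b$. Your transfer, which uses that fundamental vector fields are $\Psi_{b,1}$-related, is fine and slightly more direct than the paper's route through $df_b$ and $C_0$. The gap is the model inequality $\|\ell_{\mathrm{lin}}(v)\|^{3/2}\le C\|(\ell_{\mathrm{lin}}(v)^\#)_V(v)\|$. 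The paper takes this from \cite[Theorem~4.7]{Fisher2012}. Your own justification does not establish it.

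You reduce to the unit sphere by homogeneity and check that the zero sets coincide; both steps are correct. You then invoke Łojasiewicz together with the claim that homogeneous quartics have $\theta=1/4$, and that claim is false. Homogeneity only shows that the exponent at the origin can be no better than $3/4$. It reduces the estimate to the local Łojasiewicz exponent being at least $1/4$ at every zero on the unit sphere. The general Łojasiewicz theorem gives some $\theta\in(0,\tfrac12]$ at each such zero, with no control beyond that.

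Here is a counterexample. Take $f(x,y,z)=(xz-y^2)^2+z^4$ on $\R^3$ and consider points $(1,y,z)$ with $z=y^2-2z^3$, so that $z\approx y^2$. At these points $\partial_zf=0$, $\partial_yf=8yz^3$ and $\partial_xf=-4z^4$. Hence $\|\nabla f\|\approx 8|y|^7$, while $f\approx y^8$, so $\|\nabla f\|/f^{3/4}\approx 8|y|\to0$. This ratio is scale invariant, since both $\|\nabla f\|$ and $f^{3/4}$ are homogeneous of degree $3$. Therefore $\|\nabla f\|\ge cf^{3/4}$ fails in every ball about $0$.

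The exponent $3/4$ for $\tfrac12\|\ell_{\mathrm{lin}}\|^2$ is therefore a property of quadratic moment maps of unitary representations, not of quartic homogeneity. You must either cite it, as the paper does, or prove it from the moment-map structure. One route is induction on $\dim V_x$ through the equivariant local normal form at a zero $v_0\neq0$. There $\|\ell_{\mathrm{lin}}\|^2$ becomes $\|m\|^2+\|\ell_W(w)\|^2$, with $\ell_W$ the quadratic moment map of the stabilizer on a lower-dimensional symplectic slice. When $v_0$ is fixed by the identity component, $\ell_{\mathrm{lin}}$ factors through the complement of the fixed subspace, which again lowers the dimension.

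As written, your argument yields only some uniform exponent $1-\theta$. That would suffice qualitatively for convergence of the flow. It does not give the lemma as stated, nor the rates $(1+t)^{-1/2}$ and $f_b^{1/4}$ used in Proposition~\ref{chtopology:prop:flow}. Separately, your remark that uniformity in $b$ needs continuity at all points $(b,0)$ is handled by shrinking $B$, as you and the paper both allow.
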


\begin{proof}
Continuity and compactness give the first two bounds after decreasing $\rho_2$. We apply the equivariant Moser argument to
$$
\omega_{b,t}=\omega_{\mathrm{lin}}+t(\omega_b-\omega_{\mathrm{lin}}),\qquad 0\leq t\leq1.
$$
These forms have uniformly bounded inverses on $B\times U_2$. We define the invariant radial primitive $\vartheta_b$ by
$$
\vartheta_b(v)(w)
=\int_0^1 t(\omega_b-\omega_{\mathrm{lin}})_{tv}(v,w)\,dt.
$$
Then $d\vartheta_b=\omega_b-\omega_{\mathrm{lin}}$, $\vartheta_b(0)=0$, and $\|\vartheta_b(v)\|\leq C\|v\|$. We denote by $\Psi_{b,t}$ the Moser isotopy determined by
$$
\iota_{\dot\Psi_{b,t}\circ\Psi_{b,t}^{-1}}\omega_{b,t}=-\vartheta_b,
\qquad \Psi_{b,0}=\id.
$$
After decreasing $\rho_2$, the isotopy and its inverse are defined on a neighborhood of $\overline{U_2}$ for $0\leq t\leq1$, with first derivatives bounded uniformly in $b$ and $t$. It is $K_x$-equivariant, fixes the origin, and satisfies $\Psi_{b,1}^*\omega_b=\omega_{\mathrm{lin}}$.

The definition of $\ell_{\mathrm{lin}}$ gives $d\langle\ell_{\mathrm{lin}},\xi\rangle=\iota_{\xi_V}\omega_{\mathrm{lin}}$. Equivariance makes $\ell_b\circ\Psi_{b,1}$ another moment map for the same form. Their difference is constant and vanishes at zero by Lemma~\ref{chtopology:lem:zero-section}. Thus
$$
\ell_b\circ\Psi_{b,1}=\ell_{\mathrm{lin}}.
$$

By \cite[Theorem~4.7]{Fisher2012}, the fixed polynomial $\|\ell_{\mathrm{lin}}\|^2/2$ satisfies the gradient inequality with exponent $3/4$. The uniform derivative bounds transfer it to $f_b^{3/4}\leq C\|df_b\|$. Since
$$
df_b(v)[w]
=\omega_b(v)\bigl((\ell_b(v)^\#)_V(v),w\bigr),
$$
the upper bound for $\omega_b$ gives the conclusion after decreasing $\rho_2$.
\end{proof}

These estimates give uniform convergence of the following moment-map flow.

\begin{prop}
\label{chtopology:prop:flow}\label{chtopology:cor:flow-orbits}
There are radii $0<\rho_1<\rho_*<\rho_2$ such that, for
$$
U_1=\{v\in V_x:\|v\|<\rho_1\},
$$
the solution of
$$
\dot v_b=-I(\ell_b(v_b)^\#)_V(v_b),\qquad v_b(0)=v\in U_1,
$$
exists for all $t\geq0$ and remains in $\{\|v\|\leq\rho_*\}\subset U_2$. The limit
$$
R(b,v)=\lim_{t\to\infty}v_b(t)
$$
defines a continuous $K_x$-equivariant map $R:B\times U_1\to U_2$. One has $\ell_b(R(b,v))=0$ and, uniformly in $b$,
$$
\|v_b(t)-R(b,v)\|\leq C(1+t)^{-1/2},\qquad
\|R(b,v)-v\|\leq C f_b(v)^{1/4}.
$$
Moreover,
$$
(b,v)\longmapsto
(\nabla^+(b,R(b,v)),\varphi^+(b,R(b,v)))
$$
is continuous in the $C^\infty$ topology. For every finite $t$, one has $v_b(t)\in H_x\cdot v$. Consequently the flow preserves $Q_x$, its limit belongs to $\overline{H_x\cdot v}$, and $q_x(R(b,v))=q_x(v)$ for $v\in Q_x\cap U_1$.
\end{prop}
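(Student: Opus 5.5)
The argument is the standard Łojasiewicz gradient-flow argument, made uniform in $b$ through Lemma~\ref{chtopology:lem:moment-estimates}. First I would check that the flow is the negative gradient flow of $f_b$ for the metric $g_b(u,w)=\omega_b(v)(u,Iw)$. Using $df_b(v)[w]=\omega_b(v)\bigl((\ell_b(v)^\#)_V(v),w\bigr)$, the field $X=-I(\ell_b^\#)_V$ gives
$$
\frac{d}{dt}f_b(v_b)=-\omega_b\bigl((\ell_b^\#)_V,I(\ell_b^\#)_V\bigr)\le -c_0\|(\ell_b^\#)_V\|^2 .
$$
So $f_b$ decreases and $\|\dot v_b\|\le\|(\ell_b^\#)_V\|$. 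I would then combine this with the gradient inequality $f_b^{3/4}\le C\|(\ell_b^\#)_V\|$ in the usual way, differentiating $f_b(v_b)^{1/4}$:
$$
-\frac{d}{dt}f_b^{1/4}=\tfrac14 f_b^{-3/4}\Bigl(-\frac{d}{dt}f_b\Bigr)\ge c\,\|(\ell_b^\#)_V\|\ge c\,\|\dot v_b\|
$$
while the solution stays in $U_2$. This gives the length bound $\int_0^T\|\dot v_b\|\,dt\le C f_b(v)^{1/4}$, with $C$ independent of $b$.

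Since $\ell_b(0)=0$ and $\ell_b$ is smooth uniformly in $b$, we have $f_b(v)\le C\|v\|^4$. Choosing $\rho_1$ small relative to $\rho_*<\rho_2$ keeps the trajectory inside $\{\|v\|\le\rho_*\}$ by a continuity and bootstrap argument, so the flow exists for all $t\ge0$. The finite length gives convergence to $R(b,v)$ together with $\|R(b,v)-v\|\le Cf_b(v)^{1/4}$. The limit is a critical point, so $(\ell_b^\#)_V(R)=0$, and the gradient inequality forces $f_b(R)=0$, i.e.\ $\ell_b(R)=0$. For the rate, $\frac{d}{dt}f_b\le -c f_b^{3/2}$ gives $f_b(v_b(t))\le C(1+t)^{-2}$. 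The tail-length bound then gives $\|v_b(t)-R\|\le Cf_b(v_b(t))^{1/4}\le C(1+t)^{-1/2}$, uniformly in $b$ and $v$.

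$K_x$-equivariance of $R$ follows from equivariance of $\ell_b$ and $\omega_b$ and uniqueness of ODE solutions. Continuity of $R$ follows from the uniform rate: $R$ is a uniform limit of the maps $(b,v)\mapsto v_b(t)$, and these are continuous by continuous dependence of ODE solutions on parameters. Continuity of $(b,v)\mapsto(\nabla^+,\varphi^+)(b,R(b,v))$ in $C^\infty$ then follows by composition with Proposition~\ref{chtopology:prop:real-moment-reduction}, which gives smoothness into every $W^{m,2}$ and hence into $C^\infty$ by Sobolev embedding.

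For the orbit statement, the velocity $-I\xi_V(v)=(-i\xi)_V(v)$ is the infinitesimal action of $-i\xi\in i\fk_x\subset\Lie H_x$ with $\xi=\ell_b(v_b(t))^\#$. So $v_b(t)=g(t)\cdot v$, where $g(t)$ solves $\dot g=-i\,\ell_b(v_b(t))^\#\,g$ in $H_x$. This is a linear ODE with continuous coefficients, so it has a solution on all of $[0,\infty)$, and $g(t)\in H_x$ because $H_x$ is a closed Lie subgroup. This is the step I expect to be the only subtle point: verifying that the sign and left-action conventions match the definition of $\xi_V$, so that the flow really is tangent to the complexified orbit rather than merely to some vector field. $Q_x$ is $H_x$-invariant, so the flow preserves it, and since $Q_x$ is closed the limit $R$ lies in $\overline{H_x\cdot v}\cap Q_x$. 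Finally, $q_x$ is continuous and $H_x$-invariant, so it is constant along the trajectory and therefore equal at the limit: $q_x(R(b,v))=q_x(v)$.
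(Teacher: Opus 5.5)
Your proposal is correct and follows essentially the same route as the paper. The uniform gradient inequality of Lemma~\ref{chtopology:lem:moment-estimates} gives the length bound, the decay $f_b(v_b(t))\le C(1+t)^{-2}$ and the tail estimate; continuity of $R$ comes from uniform convergence of the finite-time flows; and the orbit statement comes from solving $g'g^{-1}=-i\ell_b(v_b)^\#$ in $H_x$. The only difference is that you get $C^\infty$-continuity of $(b,v)\mapsto(\nabla^+,\varphi^+)(b,R(b,v))$ by composing the continuous map $R$ with the maps of Proposition~\ref{chtopology:prop:real-moment-reduction}, which are smooth into every $W^{m,2}$, whereas the paper uses a mean-value estimate along the flow; both arguments are valid.
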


\begin{proof}
The moment-map identity and the first estimate in Lemma~\ref{chtopology:lem:moment-estimates} give
$$
\begin{aligned}
-\dot f_b
&=\omega_b\left((\ell_b(v_b)^\#)_V(v_b),
I(\ell_b(v_b)^\#)_V(v_b)\right)
\geq c_0\|(\ell_b(v_b)^\#)_V(v_b)\|^2,\\
\|\dot v_b\|&=\|(\ell_b(v_b)^\#)_V(v_b)\|.
\end{aligned}
$$
As long as $f_b(v_b(t))>0$, Lemma~\ref{chtopology:lem:moment-estimates} and the preceding identities imply
$$
-\frac{d}{dt}f_b^{1/4}
\geq\frac{c_0}{4}f_b^{-3/4}
\|(\ell_b(v_b)^\#)_V(v_b)\|^2
\geq c_1\|\dot v_b\|,
\qquad -\dot f_b\geq a_1 f_b^{3/2}.
$$
The length between two finite times is bounded by a constant times the decrease in $f_b^{1/4}$. If $f_b(v_b(t_0))=0$, then the trajectory is constant for $t\geq t_0$. We choose $\rho_1$ so that $\|v\|+C f_b(v)^{1/4}<\rho_*$ for $v\in U_1$, uniformly in $b$. The first inequality keeps every trajectory in the closed ball of radius $\rho_*$ and gives global existence. If $f_b(v)>0$, integration of the second inequality gives
$$
f_b(v_b(t))\leq
\bigl(f_b(v)^{-1/2}+a_1t/2\bigr)^{-2}
\leq C(1+t)^{-2},
$$
where the constants are uniform on $B\times U_1$. The remaining trajectory length is at most $C f_b(v_b(t))^{1/4}$. Thus the trajectories are uniformly Cauchy, their limits lie in $\ell_b^{-1}(0)$, and the stated distance bounds follow. Finite-time solutions depend smoothly on $(b,v)$; the uniform tail estimate makes their limits jointly continuous. Uniqueness gives equivariance.

On each finite interval, we solve $g'g^{-1}=-i\ell_b(v_b)^\#$ in $H_x$ with $g(0)=1$. Uniqueness gives $v_b(t)=g(t)\cdot v$. This proves that the trajectory remains in $H_x\cdot v$ and that $Q_x$ is preserved. Continuity of invariant polynomials gives $q_x(R(b,v))=q_x(v)$.

Proposition~\ref{chtopology:prop:real-moment-reduction} bounds the derivative of $(\nabla^+,\varphi^+)$ with respect to $v$ in every finite Sobolev norm on $B\times\{\|v\|\leq\rho_*\}$. The mean-value inequality consequently gives, for every $s\geq k$,
$$
\|(\nabla^+,\varphi^+)(b,v_b(t))
-(\nabla^+,\varphi^+)(b,R(b,v))\|_{\W{s}}
\leq C_s(1+t)^{-1/2}.
$$
The map $(b,v)\mapsto(\nabla^+,\varphi^+)(b,R(b,v))$ is therefore continuous with respect to every $C^m$-seminorm, and hence in the $C^\infty$ topology.
\end{proof}

For a fixed curve, a degree-zero Higgs bundle is polystable precisely when its complex gauge orbit contains a Hitchin solution, and that solution is unique up to unitary gauge \cite[\S1, \S3.1]{Fan2020Construction}. The closure of a semistable orbit contains a unique polystable orbit \cite[Lemma~3.7]{Fan2020Construction}. The Yang--Mills--Higgs flow of a semistable Higgs bundle converges to a representative of its Seshadri graded object \cite[Theorems~1.1, 1.4 and~5.3]{Wilkin08}.

These statements restrict to trace-free Higgs fields and determinant-one gauges. If a complex gauge transformation $g$ identifies two such Higgs pairs, then $\det g$ is a nonzero constant on the compact curve. There is a scalar $c\in\C^*$ with $c^r=(\det g)^{-1}$, and the transformation $cg$ has determinant one and induces the same action. The determinant metric of a Hitchin solution is constant and can be normalized by a constant rescaling. The canonical determinant isomorphism associated with a Jordan--H\"older filtration induces the determinant trivialization of the graded object. We apply the Yang--Mills--Higgs flow separately on each fiber $X_b$; the required dependence on $b$ comes from the moment-map flow in Proposition~\ref{chtopology:prop:flow}.

To apply these fixed-curve results, we establish semistability for every fiber of $\theta_x$ and $\widetilde\theta_x$.

\begin{lem}
\label{chtopology:lem:semistability}
After shrinking $B$, $U$, and $\mathcal W_x$, every Higgs bundle in the family $\theta_x$ over $Z_x$ is semistable. The same holds for $\widetilde\theta_x$ over $B\times\widetilde U_x$.
\end{lem}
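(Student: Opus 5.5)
The plan is to prove semistability by openness of semistability near a polystable (hence semistable) central fiber, made uniform in the Teichmüller parameter by a compactness/contradiction argument. First, I reduce the statement for $\widetilde\theta_x$ to the one for $\theta_x$. By Lemma~\ref{chlocal:lem:extended-family}, every fiber of $\widetilde\theta_x$ over $(b,v^\sigma)$ with $v\in Q_x\cap U$ and $\sigma\in H_x$ is $\theta_x(b,v)^\sigma$. This is a complex gauge transform of a fiber of $\theta_x$, hence an isomorphic Higgs bundle. Since $\widetilde U_x\subset H_x(Q_x\cap U)$ by Theorem~\ref{chlocal:thm:saturated-quotient}, semistability of all fibers of $\theta_x$ over $B\times(Q_x\cap U)$ gives it for $\widetilde\theta_x$, after shrinking $\mathcal W_x$ to stay inside the image of the shrunken $U$.

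For $\theta_x$ I argue by contradiction. Suppose there are $(b_n,v_n)\to(0,0)$ in $Z_x$ such that the Higgs bundle $\theta_x(b_n,v_n)$ on $X_{b_n}$ has a $\varphi$-invariant saturated subsheaf $F_n$ of rank $0<r_n<r$ and degree $\deg F_n>0$. Passing to a subsequence, $r_n=s$ is constant. A destabilizing subsheaf is equivalently a section of the Grassmannian bundle, i.e.\ a holomorphic projection $\pi_n$, which I would model in the fixed smooth bundle $E$ as an $h$-orthogonal projection $\pi_n\in W^{1,2}$. It satisfies $(1-\pi_n)L_{b_n,v_n}\pi_n=0$ and $(1-\pi_n)\varphi^{\mathrm{rel}}_{b_n,v_n}\pi_n=0$, in the operators of Theorem~\ref{chlocal:thm:relative-family}. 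The Chern--Weil formula then gives
$$
\deg F_n=\frac{i}{2\pi}\int_\Sigma\tr(\pi_nF_{\nabla_n})-\frac{1}{2\pi}\|L_{b_n,v_n}''\pi_n\|_{L^2}^2\ \text{(suitably normalized)}.
$$
I would add the Higgs term $\tr(\pi_n[\varphi_n,\varphi_n^{*h}])$, which is controlled by the same second fundamental form, so that the curvature part only involves $\boldsymbol\mu_\R$ of the pair. Since $A(b_n,v_n)\to0$ smoothly (Lemma~\ref{chlocal:lem:kuranishi-regularity}), $\boldsymbol\mu_\R(\nabla(b_n,v_n),\varphi(b_n,v_n))\to\boldsymbol\mu_\R$ at $x$, which is $0$. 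Then $\deg F_n>0$ forces the integral to be positive, while the second fundamental form satisfies $\|\bar\partial\pi_n\|_{L^2}^2\le C$ uniformly. This yields a uniform $W^{1,2}$ bound on $\pi_n$.

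Next I extract a limit. Weak $W^{1,2}$ compactness, together with the fact that $\pi_n^2=\pi_n=\pi_n^*$ is preserved under strong $L^2$ limits, gives a weakly holomorphic $\varphi$-invariant projection $\pi$ for $x$ on $X$. By Uhlenbeck--Yau regularity this is a $\varphi$-invariant subsheaf of rank $s$. Lower semicontinuity of the $L^2$ norm of $\bar\partial\pi$ and convergence of the curvature term give $\deg\ge\limsup\deg F_n\ge1$. The degrees are integers, so positivity persists in the limit. This contradicts the semistability of the polystable $x$. Hence there is a neighborhood of $(0,0)$ on which all fibers are semistable, and I shrink $B$ and $U$ (keeping $U$ a $K_x$-invariant ball) to lie inside it.

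The main obstacle is this limiting step. The operators $L_{b_n,v_n}$ live on varying complex structures $J_{b_n}$, and the weak limit must satisfy the limiting equation. I would handle this by writing everything in the fixed coordinates of Lemma~\ref{chlocal:lem:marked-base}, where $L_b=\bar\partial_E-\iota_{\mu(b)}\partial_h+a$ with $\mu(b_n)\to0$ smoothly. The uniform ellipticity from Lemma~\ref{chlocal:lem:uniform-estimates} makes the $W^{1,2}$ bound on $\pi_n$ follow from the $L^2$ bound of $L_{b_n,v_n}\pi_n$. The equation $(1-\pi)\bar\partial\pi=0$ then passes to the limit distributionally, since products of a strongly $L^2$-convergent and a weakly $L^2$-convergent sequence converge weakly.
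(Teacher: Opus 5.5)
Your argument is correct, but it takes a longer route than the paper's. The paper applies the same Higgs Chern--Weil formula directly, with no sequence. Dropping the nonpositive second-fundamental-form contributions gives, for every Higgs-invariant subbundle $F$ of every fiber over $(b,v)$,
\[
\deg F\leq\frac1{2\pi}\int_\Sigma\tr\bigl(\Pi_F\,i*\boldsymbol\mu_{\R}(\nabla(b,v),\varphi(b,v))\bigr)\omega_\Sigma
\leq\frac{r\Vol(\Sigma)}{2\pi}\bigl\|i*\boldsymbol\mu_{\R}(\nabla(b,v),\varphi(b,v))\bigr\|_{L^\infty,\mathrm{op}}.
\]
The right-hand side is continuous in $(b,v)$ and vanishes at $(0,0)$, because $x$ is represented by a Hitchin solution. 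Once it is $<1$, integrality forces $\deg F\leq0$, and saturation handles arbitrary coherent subsheaves.

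Your own displayed identity already gives this at the step where you extract the uniform $W^{1,2}$ bound. Since $|\tr(\pi_n\,i*\boldsymbol\mu_{\R})|\leq r\|i*\boldsymbol\mu_{\R}\|_{\mathrm{op}}$ pointwise, the curvature term tends to zero uniformly in $\pi_n$. So $\deg F_n\geq1$ is already impossible for large $n$, and the weak-limit and Uhlenbeck--Yau step that you single out as the main obstacle can be deleted.

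What your compactness argument buys is generality. It only needs a uniform bound on the curvature term, so with an arbitrary smooth reference metric it proves openness of semistability around any semistable central fiber, uniformly in the Beltrami parameter. The paper's shortcut relies on $\boldsymbol\mu_{\R}$ vanishing at the center. In exchange, it is effective (it gives an explicit neighborhood) and needs no regularity theory for weakly holomorphic projections.

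Your reduction from $\widetilde\theta_x$ to $\theta_x$ is the same as in the paper: gauge invariance of semistability and $\widetilde U_x\subset H_x(Q_x\cap U)$, with $\mathcal W_x$ rechosen for the smaller invariant ball.
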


\begin{proof}
For an invariant holomorphic subbundle $F$ with orthogonal projection $\Pi_F$, the Higgs Chern--Weil inequality \cite[proof of Lemma~5.6, equation~(83)]{Wilkin08} gives
$$
\begin{aligned}
\deg F&\leq\frac1{2\pi}\int_\Sigma
\tr\!\left(\Pi_F i*\boldsymbol\mu_{\R}(\nabla(b,v),\varphi(b,v))\right)\omega_\Sigma\\
&\leq\frac{r\Vol(\Sigma)}{2\pi}
\|i*\boldsymbol\mu_{\R}(\nabla(b,v),\varphi(b,v))\|_{L^\infty,\mathrm{op}}.
\end{aligned}
$$
Here $i*\boldsymbol\mu_{\R}$ is Hermitian and $\Pi_F$ has rank at most $r$. The last norm is continuous by the embedding $\W{k-1}\subset C^0$ and vanishes at $(0,0)$. We shrink $B\times U$ so that
$$
\frac{r\Vol(\Sigma)}{2\pi}
\|i*\boldsymbol\mu_{\R}(\nabla(b,v),\varphi(b,v))\|_{L^\infty,\mathrm{op}}<1.
$$
Since $\deg F$ is an integer, it follows that $\deg F\leq0$. The saturation of a Higgs-invariant coherent subsheaf is a Higgs-invariant subbundle on a smooth curve and has no smaller degree. Thus every fiber of $\theta_x$ over $Z_x$ is semistable. Semistability is invariant under complex gauge transformations, and $\widetilde U_x\subset H_x(Q_x\cap U)$, so the conclusion extends to $\widetilde\theta_x$ over $B\times\widetilde U_x$.
\end{proof}

We denote by $\operatorname{Sol}_{B,k}\subset\mathcal C_{B,k}$ the Hitchin-equation locus with its $\W{k}$-norm topology and by $\operatorname{Sol}_{B,\infty}$ its smooth counterpart with the $C^\infty$ topology. We write $\mathcal G_\infty$ for the smooth determinant-one unitary gauge group. Both solution spaces carry their natural projections to $B$, and their unitary orbit spaces carry the quotient topologies.

We can now identify closed $H_x$-orbits in the Kuranishi model with polystable bundles and construct continuous maps to these unitary quotients.

\begin{prop}
\label{chtopology:prop:graded-classes}\label{chtopology:prop:smooth-lift}
Assume that $\mathcal W_x$ has been shrunk so that $\widetilde U_x=q_x^{-1}(\mathcal W_x)\subset H_x(Q_x\cap U_1)$. Then $\widetilde\theta_x(b,v)$ is polystable exactly when $H_x\cdot v$ is closed in $\widetilde U_x$. The Higgs bundle associated with the closed orbit in $q_x^{-1}(q_x(v))$ represents the Seshadri graded object of $\widetilde\theta_x(b,v)$.

There are compatible continuous injective maps
$$
\chi_x:B\times\mathcal W_x\to\operatorname{Sol}_{B,k}/\mathcal G_{k+1},\qquad
\chi_{x,\infty}:B\times\mathcal W_x\to\operatorname{Sol}_{B,\infty}/\mathcal G_\infty,
$$
given, for $v\in\widetilde U_x\cap U_1$, by the class of
$$
(\nabla^+(b,R(b,v)),\varphi^+(b,R(b,v))).
$$
\end{prop}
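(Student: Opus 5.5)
The plan is to combine the moment-map flow of Proposition~\ref{chtopology:prop:flow} with the fixed-curve Hitchin--Kobayashi statements quoted above (polystability is equivalent to a Hitchin solution in the complex orbit, and semistable orbit closures contain a unique polystable orbit), applied fiberwise using Lemma~\ref{chtopology:lem:semistability}.

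\emph{Step 1: closed orbits are polystable.} Let $H_x\cdot v$ be closed in $\widetilde U_x$. Using $\widetilde U_x\subset H_x(Q_x\cap U_1)$ and equivariance, I may take $v\in Q_x\cap U_1$. Proposition~\ref{chtopology:prop:flow} gives $R(b,v)\in\overline{H_x\cdot v}=H_x\cdot v$ with $\ell_b(R(b,v))=0$. Hence $\nu_{\R}(b,R(b,v))=0$, so by Lemma~\ref{chtopology:lem:zero-section} the pair $(\nabla^+,\varphi^+)(b,R(b,v))$ solves the Hitchin equations. This pair is complex gauge equivalent to $\widetilde\theta_x(b,v)$, via $g^+$ and an element of $H_x$. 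The fixed-curve correspondence then gives polystability.

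\emph{Step 2: polystable fibers have closed orbits.} Suppose $\widetilde\theta_x(b,v)$ is polystable, and let $H_x\cdot w$ be the unique closed orbit in $q_x^{-1}(q_x(v))\cap\widetilde U_x$ given by Theorem~\ref{chlocal:thm:saturated-quotient}. Since $w\in\overline{H_x\cdot v}$, there are $h_n\in H_x$ with $h_n\cdot v\to w$. Through the holomorphic family, this says that $\widetilde\theta_x(b,w)$ lies in the complex gauge orbit closure of $\widetilde\theta_x(b,v)$. The orbit of a polystable object is closed in the semistable locus by the fixed-curve uniqueness statement, so $\widetilde\theta_x(b,w)$ and $\widetilde\theta_x(b,v)$ are gauge equivalent. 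Corollary~\ref{chlocal:cor:extended-stabilizers} converts this gauge into an element of $H_x$ with $v$ and $w$ in the same orbit, so $H_x\cdot v$ is closed.

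The same argument handles the graded-object claim. For semistable $\widetilde\theta_x(b,v)$, the closed orbit in $q_x^{-1}(q_x(v))$ is polystable by Step~1 and lies in the gauge orbit closure. The unique polystable orbit in that closure represents the Seshadri graded object, citing \cite[Lemma~3.7]{Fan2020Construction} and \cite{Wilkin08}, adapted to determinant one as explained above.

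\emph{Step 3: the maps $\chi_x$ and $\chi_{x,\infty}$.} For $y\in\mathcal W_x$, I choose $v\in\widetilde U_x\cap U_1$ with $q_x(v)=y$ and send $(b,y)$ to the unitary class of $(\nabla^+,\varphi^+)(b,R(b,v))$. By Steps~1 and~2 this is the Hitchin solution representing the graded object, which depends only on $q_x(v)$. Uniqueness of Hitchin solutions up to unitary gauge therefore makes the map well defined.

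For continuity, the composite $B\times(\widetilde U_x\cap U_1)\to\operatorname{Sol}_{B,\infty}/\mathcal G_\infty$ is continuous by the $C^\infty$-continuity in Proposition~\ref{chtopology:prop:flow}. It descends along $q_B$ because $q_B$ is a topological quotient, once one knows $q_x$ restricted to $\widetilde U_x\cap U_1$ is still a quotient onto $\mathcal W_x$. This I would get from saturation together with a $K_x$-invariant choice of radii, or from the Kempf--Ness identification $\mathcal W_x\simeq(\ell_b^{-1}(0)\cap Q_x)/K_x$ locally. The two maps are compatible through the inclusion $\operatorname{Sol}_{B,\infty}\subset\operatorname{Sol}_{B,k}$, with continuous forgetful quotient map.

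\emph{Step 4: injectivity, the main obstacle.} If two images agree, then $b$ agrees, and the Hitchin solutions over $(b,R(b,v_1))$ and $(b,R(b,v_2))$ differ by a unitary gauge $u$. I would push this to a gauge between the Coulomb representatives $\widetilde A(b,R(b,v_i))$, namely $g^+(b,R_1)\,u\,g^+(b,R_2)^{-1}$, and invoke Proposition~\ref{chlocal:prop:local-gauge-uniqueness} to get that gauge in $H_x$. That forces $R_1,R_2$ into one orbit, so $q_x(v_1)=q_x(v_2)$.

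The difficulty is that this proposition needs both representatives in Coulomb gauge with small norm; this holds because $R$ lands in $U_2$ and $A$ is small there. It needs no a priori bound on the gauge itself. One must still check that the determinant-one normalization and the $W^{k+1,2}$ regularity of $u$ are compatible, using \eqref{chlocal:eq:gauge-regularity}.
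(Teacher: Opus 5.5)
Your proposal is correct and follows essentially the paper's own argument. The flow limit $R(b,v)$ gives a Hitchin solution on the closed orbit. The fixed-curve uniqueness of the polystable orbit in a semistable orbit closure, combined with the stabilizer identification, gives both directions and the graded-object claim; you cite Proposition~\ref{chlocal:prop:local-gauge-uniqueness} directly, whereas the paper cites Corollary~\ref{chlocal:cor:extended-stabilizers}, which is equivalent here. Injectivity comes from conjugating the unitary gauge by the transformations $g^+$.

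The step you leave open can be closed without Kempf--Ness. Write $q_{U_1}$ for the restriction of $q_B$ to $B\times(\widetilde U_x\cap U_1)$. Since every $H_x$-orbit in $\widetilde U_x$ meets $U_1$, one has $q_B^{-1}(\mathcal E)=\bigcup_{\sigma\in H_x}\sigma\cdot q_{U_1}^{-1}(\mathcal E)$, so openness of $q_{U_1}^{-1}(\mathcal E)$ gives openness of $\mathcal E$. For well-definedness of $\chi_{x,\infty}$ you also need the Hitchin--Kobayashi unitary gauge $u$ to be smooth; the paper gets this by bootstrapping $du=u\Gamma_2-\Gamma_1u$ in local frames.
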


\begin{proof}
We choose $\mathcal W_x$ by Theorem~\ref{chlocal:lem:saturated-neighborhood}. Every orbit in $\widetilde U_x$ meets $U_1$, and
$$
\overline{H_x\cdot v}\subset q_x^{-1}(q_x(v))\subset\widetilde U_x.
$$
Thus an orbit in $\widetilde U_x$ is closed there exactly when it is closed in $V_x$. Semistability follows from Lemma~\ref{chtopology:lem:semistability}.

We denote by $\mathcal O_{\mathrm{cl}}$ the closed orbit over a quotient point and choose $v_0\in\mathcal O_{\mathrm{cl}}\cap U_1$. Then $z=R(b,v_0)\in\mathcal O_{\mathrm{cl}}$ satisfies $\nu_{\R}(b,z)=0$, so $(\nabla^+(b,z),\varphi^+(b,z))$ is a Hitchin solution. Equivariance shows that every fiber of $\widetilde\theta_x$ over $\mathcal O_{\mathrm{cl}}$ is polystable.

For $v\in\widetilde U_x$ with $\mathcal O_{\mathrm{cl}}\subset\overline{H_x\cdot v}$, we choose a sequence $g_j\in H_x$ such that $g_j\cdot v\to z$. Equivariance gives
$$
\widetilde\theta_x(b,g_j\cdot v)\longrightarrow
\widetilde\theta_x(b,z)
$$
in the $\W{k}$-norm topology, and every term is complex-gauge equivalent to $\widetilde\theta_x(b,v)$. Applying $g^+(b,z)$ shows that the Hitchin solution $(\nabla^+(b,z),\varphi^+(b,z))$ lies in the closure of the same determinant-one complex-gauge orbit.

If $v\in U_1$, the moment-map flow of Proposition~\ref{chtopology:prop:flow} gives another Hitchin solution at $R(b,v)$ in this closure. The uniqueness of the polystable orbit in a semistable orbit closure identifies the two solutions by a determinant-one complex gauge. Conjugating by the transformations $g^+(b,z)$ and $g^+(b,R(b,v))$ and applying Corollary~\ref{chlocal:cor:extended-stabilizers} gives $R(b,v)\in\mathcal O_{\mathrm{cl}}$. If $\widetilde\theta_x(b,v)$ is polystable, the same closed-orbit argument applied to its Hitchin representative gives $v\in\mathcal O_{\mathrm{cl}}$, proving the converse. The Yang--Mills--Higgs limit represents the Seshadri graded object and belongs to the same orbit closure in the $\W{k}$ topology. Uniqueness identifies it with the solution over $z$, including the induced determinant trivialization.

The restricted map
$$
q_{U_1}:=q_B|_{B\times(\widetilde U_x\cap U_1)}
\colon B\times(\widetilde U_x\cap U_1)\longrightarrow B\times\mathcal W_x,
\qquad (b,v)\mapsto(b,q_x(v)),
$$
is surjective because every orbit in $\widetilde U_x$ meets $U_1$. For any subset $\mathcal E\subset B\times\mathcal W_x$,
$$
q_B^{-1}(\mathcal E)
=\bigcup_{\sigma\in H_x}\sigma\cdot q_{U_1}^{-1}(\mathcal E).
$$
If $q_{U_1}^{-1}(\mathcal E)$ is open in $B\times(\widetilde U_x\cap U_1)$, this union is open in $B\times\widetilde U_x$. The quotient property of Theorem~\ref{chlocal:thm:saturated-quotient} then makes $\mathcal E$ open. Thus $q_{U_1}$ is a quotient map.

To $(b,v)$ in the domain of $q_{U_1}$, we assign the unitary gauge class of
$$
(\nabla^+(b,R(b,v)),\varphi^+(b,R(b,v))).
$$
Proposition~\ref{chtopology:prop:flow} makes this assignment continuous into $\operatorname{Sol}_{B,\infty}$ before passing to the quotient, and hence into both unitary quotients. If $(b,v_1)$ and $(b,v_2)$ lie in one fiber of $q_{U_1}$, the orbit argument above shows that their limits $z_i=R(b,v_i)$ lie on the same closed $H_x$-orbit. Equivariance identifies $\widetilde\theta_x(b,z_1)$ and $\widetilde\theta_x(b,z_2)$ by a smooth complex gauge, and the transformations $g^+(b,z_i)$ give the corresponding equivalence between the two Hitchin solutions.

The uniqueness assertion in the Hitchin--Kobayashi correspondence identifies these two smooth solutions by a unitary gauge transformation $u$. In local frames with connection forms $\Gamma_i$, the equation
$$
du=u\Gamma_2-\Gamma_1u
$$
raises the Sobolev regularity of $u$ by one derivative at each step; hence $u$ is smooth. Consequently both maps to the unitary quotients are constant on the fibers of $q_{U_1}$ and descend continuously to $\chi_x$ and $\chi_{x,\infty}$. Their compatibility follows after pullback by the surjective map $q_{U_1}$.

If two values of $\chi_x$ agree, their $B$-coordinates agree and a unitary gauge transformation identifies the corresponding Hitchin solutions. Conjugating by the transformations $g^+$ and applying Corollary~\ref{chlocal:cor:extended-stabilizers} identifies the two closed $H_x$-orbits. This proves injectivity of both maps.
\end{proof}

\subsection{Quotient topology}

We denote by $\cP_{B,k}^{\mathrm{ps}}$ and $\cP_{B,\infty}^{\mathrm{ps}}$ the spaces of relative polystable Higgs pairs with fixed determinant and trace-free Higgs field in the $W^{k,2}$ and $C^\infty$ categories, respectively. We write $\mathcal G_\infty^{\mathbb C}$ for the smooth determinant-one complex gauge group. The orbit spaces
$$
\cP_{B,k}^{\mathrm{ps}}/\mathcal G_{k+1}^{\mathbb C},
\qquad
\cP_{B,\infty}^{\mathrm{ps}}/\mathcal G_\infty^{\mathbb C}
$$
carry their quotient topologies.

We first compare smooth and Sobolev representatives and their gauge equivalences.

\begin{lem}
\label{chtopology:lem:sobolev-regularity}
Every $\W{k}$ Higgs pair is complex-gauge equivalent to a smooth Higgs pair, and every $\W{k}$ Hitchin solution is unitary-gauge equivalent to a smooth Hitchin solution. Every complex gauge transformation of class $W^{k+1,2}$ that identifies two smooth Higgs pairs is smooth. All gauges may be chosen to have determinant one.
\end{lem}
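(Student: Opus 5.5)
The plan has three steps, one for each assertion of Lemma~\ref{chtopology:lem:sobolev-regularity}: the regularity of complex gauges, the smoothing of Higgs pairs, and the smoothing of Hitchin solutions. All of them are bootstrap arguments on a fixed fiber $X_b$. I treat the gauge assertion first, since the other two reduce to it.

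\textbf{Complex gauges.} Let $g\in W^{k+1,2}$ identify two smooth pairs $(b,\alpha_1)$ and $(b,\alpha_2)$. Multiplying \eqref{chlocal:eq:right-gauge} by $g$ gives
\[
(\bar\partial_{\End}-\iota_{\mu(b)}\partial_h)g=ga_2-a_1g .
\]
This is exactly equation \eqref{chlocal:eq:gauge-regularity}. The degree-zero operator on the left is elliptic, uniformly on $B$, by Lemma~\ref{chlocal:lem:uniform-estimates}. Sobolev multiplication shows that if $g\in W^{s,2}$ with $s\geq k+1$, then the right-hand side lies in $W^{s,2}$, so $g\in W^{s+1,2}$. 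Iterating gives $g\in C^\infty$.

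\textbf{Higgs pairs.} Let $(b,\alpha)$ be a $W^{k,2}$ Higgs pair. The goal is a gauge equivalent pair with $D_0^*$-type Coulomb gauge relative to a smooth background, so that Lemma~\ref{chlocal:lem:uniform-estimates} applies. The cleanest route avoids a global slice. On a curve, every $W^{k,2}$ Dolbeault operator $\bar\partial_E-\iota_{\mu(b)}\partial_h+a$ is integrable. By local solvability of $\bar\partial$ with $W^{k,2}$ coefficients, it therefore admits local holomorphic frames of class $W^{k+1,2}$. Fix a smooth bundle $E'$ built from the same transition cocycle, which is holomorphic and hence smooth. This produces a $W^{k+1,2}$ isomorphism $g$ carrying the pair to one with a smooth Dolbeault operator. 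In that holomorphic structure the Higgs field becomes a $W^{k,2}$ section $\psi'$ satisfying $\bar\partial\psi'=0$, and elliptic regularity makes $\psi'$ smooth. The determinant is normalized as in the paragraph after Proposition~\ref{chtopology:prop:flow}. Since $\det g$ is only $W^{k+1,2}$, not constant, I multiply by $(\det g)^{-1/r}$. This requires a global $r$-th root, which exists because $\det g$ relates two holomorphic trivializations of $\det E$ and is therefore holomorphic, hence a nonzero constant. Alternatively, one can build the frames so that $\det$ matches the given trivialization, as in the construction of $\widehat\Xi_s$ in Lemma~\ref{chlocal:lem:smooth-trivialization}.

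\textbf{Hitchin solutions.} For a $W^{k,2}$ Hitchin solution $(\nabla,\varphi)$, I would take the standard unitary Coulomb gauge $d_{\nabla_0}^*(\nabla-\nabla_0)=0$ relative to a nearby smooth solution. The most robust choice is a smooth reference connection, followed by a patching argument via Uhlenbeck local Coulomb gauges if the pair is not close to the reference. In Coulomb gauge, the Hitchin equations combined with the gauge condition form an elliptic system for $(\nabla-\nabla_0,\varphi)$ with quadratic nonlinearity, and bootstrapping gives smoothness. The required unitary gauge $u$ lies in $W^{k+1,2}$. Its determinant is unitary and parallel for $d$, hence constant, and a constant root restores determinant one. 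A cleaner alternative reduces to the Higgs-pair assertion: the smooth pair obtained there, with its metric transported, satisfies the Hitchin equation. Elliptic regularity of the curvature equation, as in the proof of Proposition~\ref{chtopology:prop:real-moment-reduction} using $\bar\partial(H^{-1}\partial H)$, makes that metric smooth. Factoring $g=u\,p$ into a unitary part and a positive part then gives the unitary smoothing.

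\textbf{Main obstacle.} I expect the hardest step to be producing the global smooth representative without any smallness assumption. This is why I prefer the local holomorphic frame construction: it needs only local $\bar\partial$-solvability with Sobolev coefficients, not a global Coulomb slice.
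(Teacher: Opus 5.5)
Your proposal is correct in substance, and two of its three parts follow the paper. Bootstrapping $(\bar\partial_{\End}-\iota_{\mu(b)}\partial_h)g=ga_2-a_1g$ is how the paper proves that gauges between smooth pairs are smooth. Your ``cleaner alternative'' for Hitchin solutions is exactly the paper's proof: transport $h$ to $H=g^*h$, bootstrap $H$ through $\bar\partial(H^{-1}\partial H)$, then take $u=g(h^{-1}H)^{-1/2}$. So you can drop the Coulomb-gauge/Uhlenbeck route.

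The genuine difference is the smoothing of $W^{k,2}$ Higgs pairs. The paper applies Trautwein's lemma to the principal $\mathrm{SU}(r)$-bundle, which directly yields an $\SL_r(\C)$-valued $W^{2,2}$ gauge making the Dolbeault operator smooth. It then bootstraps this gauge to $W^{k+1,2}$ and gets the Higgs field from $\bar\partial_\nabla\varphi=0$. You instead reprove that lemma by hand, using local $W^{k+1,2}$ holomorphic frames, a holomorphic cocycle defining $E'$, and then, implicitly, a smooth isomorphism $\Psi\colon E'\to E$, so that your gauge is $\Psi$ composed with the frame map. This avoids the citation, but it loses the determinant bookkeeping that the principal-bundle formulation gives for free, and your write-up slips there.

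The claim that $\det g$ ``relates two holomorphic trivializations of $\det E$ and is therefore holomorphic, hence constant'' is wrong. Since $\tr a=0$, the new connection form satisfies $\tr a'=(\det g)^{-1}(\bar\partial-\iota_{\mu(b)}\partial)\det g$. So $\det g$ is smooth, but it is holomorphic only when $\tr a'=0$, which is what you are trying to arrange. The paragraph after Proposition~\ref{chtopology:prop:flow} applies only to gauges between two pairs that both already have the fixed determinant.

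Moreover, $\det g$ depends on the choice of $\Psi$. Composing $g$ with a smooth gauge $\operatorname{diag}(f,1,\dots,1)$, where $f\colon\Sigma\to S^1$ has primitive class in $H^1(\Sigma;\mathbb Z)$, gives an equally valid gauge. For $r\geq2$, at least one of the two determinants has no global $r$-th root, so $(\det g)^{-1/r}$ can fail to exist.

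Your fallback is the right fix and should be the main argument. Normalize each local holomorphic frame so that its determinant is the given trivialization, for instance by rescaling one vector by a holomorphic function on the disc. This makes the cocycle $\SL_r(\C)$-valued. Then choose $\Psi$ determinant-preserving as in the proof of Corollary~\ref{chtopology:cor:reference-data}, which gives $\det g=1$. Equivalently, correct any $g$ on the right by the smooth gauge $\operatorname{diag}((\det g)^{-1},1,\dots,1)$ in a global smooth frame. With this repair your Hitchin step inherits $\det(h^{-1}H)=1$, so $u$ has determinant one.
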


\begin{proof}
We apply \cite[Lemma~2.9]{Trautwein2017} to the principal $\mathrm{SU}(r)$ bundle. The resulting $W^{2,2}$ complex gauge transformation takes values in $\mathrm{SL}_r(\C)$ and makes the Dolbeault operator smooth. Elliptic regularity for the gauge equation raises its regularity to $W^{k+1,2}$, and the equation $\bar\partial_\nabla\varphi=0$ then makes the Higgs field smooth. The same gauge equation proves that a complex gauge transformation of Sobolev class between smooth Higgs pairs is smooth; compare \cite[Lemma~14.9]{AB83}.

We fix a Hitchin solution $y=(\nabla,\varphi)$ and a gauge transformation $g\in\mathcal G_{k+1}^{\mathbb C}$ for which the Higgs pair underlying $y^g$ is smooth. The metric $H=g^*h$ satisfies the Hitchin equation on that smooth Higgs bundle. The elliptic regularity argument in Proposition~\ref{chtopology:prop:real-moment-reduction} makes $H$ smooth. The positive endomorphism $h^{-1}H$ is smooth and has determinant one. Then
$$
u=g(h^{-1}H)^{-1/2}
$$
is a determinant-one unitary gauge, and $y^u=(y^g)^{(h^{-1}H)^{-1/2}}$ is a smooth Hitchin solution.
\end{proof}

The maps $\chi_x$ and $\chi_{x,\infty}$ give local coordinates on $\operatorname{Sol}_{B,k}/\mathcal G_{k+1}$ and $\operatorname{Sol}_{B,\infty}/\mathcal G_\infty$, respectively.

\begin{thm}\label{chtopology:thm:open-charts}\label{chtopology:thm:regularity-independence}
The maps $\chi_x$ and $\chi_{x,\infty}$ are homeomorphisms onto open neighborhoods of $[x]$. The inclusion of the smooth solution space, as well as the inclusions between Sobolev orders $k'\geq k\geq4$, induces homeomorphisms of the corresponding unitary gauge quotients.
\end{thm}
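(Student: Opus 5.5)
Since $\chi_x$ and $\chi_{x,\infty}$ are already continuous and injective by Proposition~\ref{chtopology:prop:graded-classes}, the plan has three parts. We show that their images contain open neighborhoods of $[x]$, that the images are open, and that the inverses are continuous. All three will follow from one local surjectivity statement, proved by a Coulomb-gauge argument at nearby solutions. The same statement will then be used to compare the different regularity classes.

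\emph{Step 1: local surjectivity and continuity of the inverse near $[x]$.} Let $y=(b,\nabla,\varphi)\in\operatorname{Sol}_{B,k}$ be $\W{k}$-close to the Hitchin solution at $x$, modulo $\mathcal G_{k+1}$.
\begin{enumerate}
\item Pass to the coordinates $(b,\alpha)$ via the equivariant identification following Proposition~\ref{chlocal:prop:unitary-properness}; then $\alpha$ is small in $\W{k}$.
\item Apply the gauge fixing of Proposition~\ref{chlocal:prop:kuranishi-reduction}. This gives $\alpha^{\exp u(b,\alpha)}=A(c_x(b,\alpha))$ with $c_x(b,\alpha)=(b,v)\in Z_x$, and $v$ depends continuously on $y$.
\item Make $\|v\|$ small enough that $v\in U_1$ and $q_x(v)\in\mathcal W_x$.
\end{enumerate}
Since $y$ is a Hitchin solution, the Higgs bundle $\widetilde\theta_x(b,v)$ is polystable. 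By Proposition~\ref{chtopology:prop:graded-classes} the orbit $H_x\cdot v$ is closed, and $R(b,v)$ lies in it. The pair $(\nabla^+(b,R(b,v)),\varphi^+(b,R(b,v)))$ is then a Hitchin solution in the complex gauge orbit of $y$. Hitchin--Kobayashi uniqueness makes it unitarily equivalent to $y$, so $[y]=\chi_x(b,q_x(v))$.

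Hence the image of $\chi_x$ contains a neighborhood of $[x]$. Moreover, the assignment $y\mapsto(b,q_x(c_x(b,\alpha)))$ is continuous on a $\mathcal G_{k+1}$-invariant neighborhood and constant on orbits, so it descends. Because unitary quotient maps are open (Proposition~\ref{chlocal:prop:unitary-properness}), it descends to a continuous local inverse of $\chi_x$.

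\emph{Step 2: openness of the whole image.} Repeating the argument at any point $[y']$ in the image requires recentering. The intended route is to run the Step~1 construction at the point $x'=\chi_x(b',w)$ itself, where $w$ lies on a closed orbit. This gives a chart $\chi_{x'}$ that covers a neighborhood of $[y']$, and injectivity together with continuity of the inverse from Step~1 at $x'$ shows that this neighborhood lies in the image of $\chi_x$. This is the main obstacle. The uniform estimates of Lemma~\ref{chlocal:lem:uniform-estimates} over compact parts of $B$, and Lemma~\ref{chtopology:lem:moment-estimates}, are used so that the constants survive the recentering. The fallback is a direct limit argument: if solutions $y_j\to y'$ avoided the image, applying $c_x$ with gauge fixing near $x$ would still give points $(b_j,v_j)$ converging to a preimage of $y'$, since $\W{k}$-closeness to the image point is what the implicit function theorem needs once $\mathcal V$ is enlarged.

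\emph{Step 3: the smooth case and changes of Sobolev order.}
\begin{enumerate}
\item By Lemma~\ref{chtopology:lem:sobolev-regularity}, every $\W{k}$ solution is unitarily equivalent to a smooth one, and gauges between smooth solutions are smooth. So the maps from $\operatorname{Sol}_{B,\infty}/\mathcal G_\infty$ and $\operatorname{Sol}_{B,k'}/\mathcal G_{k'+1}$ to $\operatorname{Sol}_{B,k}/\mathcal G_{k+1}$ are continuous bijections.
\item By the compatibility in Proposition~\ref{chtopology:prop:graded-classes} and the Sobolev-independence of $A$, $u$ and $\beta$ (Lemma~\ref{chlocal:lem:kuranishi-regularity} and Proposition~\ref{chtopology:prop:real-moment-reduction}), these bijections intertwine $\chi_{x,\infty}$, the order-$k'$ chart and $\chi_x$.
\item Since these charts are local homeomorphisms that cover all three quotients, each bijection is a local homeomorphism and therefore a homeomorphism.
\item Running Step~1 with $C^\infty$-small data, which is $\W{k}$-small, shows that $\chi_{x,\infty}$ is a homeomorphism onto an open set as well.
\end{enumerate}
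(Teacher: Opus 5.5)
Your Step~1 is the paper's argument: gauge-fix Hitchin solutions near $x$ with $c_x$, then use Proposition~\ref{chtopology:prop:graded-classes} and Hitchin--Kobayashi uniqueness to get $\chi_x\circ\sigma_x=\id_O$ on an open neighborhood $O$ of $[x]$, and descend through the open unitary quotient map. Step~3 agrees with the paper up to order. The paper does not rerun Step~1 in the $C^\infty$ topology; it notes instead that on each chart the inverse of the induced bijection is $\chi_{x,\infty}\circ\chi_x^{-1}$.

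One small correction to Step~1: $c_x$ is defined only on the neighborhood $\mathcal V$, not on a $\mathcal G_{k+1}$-invariant set. What you need is constancy on the fibers of the open surjection $N\cap\operatorname{Sol}_{B,k}\to O$. This follows from injectivity of $\chi_x$ together with $\chi_x(b,q_x(v))=[y]$, or from Proposition~\ref{chlocal:prop:local-gauge-uniqueness} as in the paper.

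The gap is Step~2, which you call the main obstacle. The recentering argument does not show what it claims. A chart $\chi_{x'}$ that is a homeomorphism onto a neighborhood of $[y']$ says nothing about whether that neighborhood lies in the image of $\chi_x$: injectivity and continuity of the inverse at $x'$ concern $\chi_{x'}$ only. Comparing the two charts is exactly the transition problem settled later in the proof of Theorem~\ref{thm:analytic-space}, and the uniform estimates you cite do not address it.

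The step is, however, unnecessary. After Step~1, the set $\mathcal U_x=\chi_x^{-1}(O)$ is open by continuity of $\chi_x$. Injectivity gives $\sigma_x\circ\chi_x=\id_{\mathcal U_x}$, so $\chi_x|_{\mathcal U_x}$ is a homeomorphism onto $O$. The paper then replaces $B\times\mathcal W_x$ by an open product $B'\times\mathcal W'_x\subset\mathcal U_x$ with $\mathcal W'_x$ Stein; this shrinking is how the statement is meant.

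If you want openness of the image of the unshrunk map, your fallback can be made rigorous without enlarging $\mathcal V$. For $[y']=\chi_x(b',q)$, take $w=R(b',v)$ on the closed orbit. Then $w\in Q_x\cap U$, and $(b',A(b',w))\in\mathcal V$ by the choice of domains in Proposition~\ref{chlocal:prop:kuranishi-reduction}. Apply the fixed gauge $g^+(b',w)^{-1}$ to solutions near $y'$ and gauge-fix with $c_x$ around $(b',A(b',w))$. Openness of $\mathcal W_x$ keeps the new parameters in $\widetilde U_x$, and then Proposition~\ref{chtopology:prop:graded-classes} and Hitchin--Kobayashi uniqueness apply exactly as in Step~1.
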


\begin{proof}
Near the chosen Hitchin solution, Proposition~\ref{chlocal:prop:kuranishi-reduction} assigns continuous gauge-fixed parameters $(b(y),v(y))$ to nearby Higgs pairs. We choose an open neighborhood $N\subset F^{-1}(0)$ so that $v(y)\in\widetilde U_x\cap U_1$ for $y\in N$, and set
$$
N_{\mathrm{sol}}=N\cap\operatorname{Sol}_{B,k}.
$$
The unitary quotient projection restricts to an open surjection $N_{\mathrm{sol}}\to O$, where $O$ is an open neighborhood of $[x]$.

If two elements of $N_{\mathrm{sol}}$ differ by a unitary gauge, their gauge-fixed Higgs pairs differ by a complex gauge. Proposition~\ref{chlocal:prop:local-gauge-uniqueness} therefore puts their parameters in the same $H_x$ orbit. Hence
$$
y\longmapsto\bigl(b(y),q_x(v(y))\bigr)
$$
descends to a continuous map $\sigma_x:O\to B\times\mathcal W_x$. Proposition~\ref{chtopology:prop:graded-classes} shows that $y$ and the Hitchin solution over $R(b(y),v(y))$ determine the same unitary gauge class. Thus
$$
\chi_x\circ\sigma_x=\id_O.
$$
On the open set $\mathcal U_x=\chi_x^{-1}(O)$, injectivity gives $\sigma_x\circ\chi_x=\id_{\mathcal U_x}$. Hence $\chi_x|_{\mathcal U_x}$ is a homeomorphism onto $O$. We choose an open product $B'\times\mathcal W'_x\subset\mathcal U_x$, with $\mathcal W'_x$ Stein. Replacing $B\times\mathcal W_x$ by this product gives an open chart at order $k$.

Lemma~\ref{chtopology:lem:sobolev-regularity} and the smoothness of gauge transformations between smooth solutions show that the inclusion of smooth solutions induces a continuous bijection on the quotient spaces. On each chart its inverse is $\chi_{x,\infty}\circ\chi_x^{-1}$, which is continuous. These charts cover the quotient, so this inclusion is a homeomorphism and the smooth charts are open. Inclusions between finite Sobolev orders commute with these homeomorphisms and hence are homeomorphisms as well.
\end{proof}

We now compare the unitary and complex gauge-quotient topologies.

\begin{prop}
\label{chtopology:prop:harmonic-correspondence}
For every $k\geq4$, each polystable Higgs pair of class $\W{k}$ is complex-gauge equivalent to a Hitchin solution, whose unitary-gauge class is unique. The assignment
$$
[(\nabla,\varphi)]_{\mathcal G_{k+1}}
\longmapsto
[(\bar\partial_\nabla,\varphi)]_{\mathcal G_{k+1}^{\mathbb C}}
$$
defines a homeomorphism
$$
\operatorname{Sol}_{B,k}/\mathcal G_{k+1}
\xrightarrow{\ \sim\ }
\cP_{B,k}^{\mathrm{ps}}/\mathcal G_{k+1}^{\mathbb C}.
$$
The corresponding map
$$
\operatorname{Sol}_{B,\infty}/\mathcal G_\infty
\xrightarrow{\ \sim\ }
\cP_{B,\infty}^{\mathrm{ps}}/\mathcal G_\infty^{\mathbb C}
$$
is also a homeomorphism. These homeomorphisms are compatible with the natural inclusions between Sobolev classes and with the inclusion of smooth pairs into $\W{k}$ pairs.
\end{prop}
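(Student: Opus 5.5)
The plan has three parts: build the map, show it is a bijection, and then prove continuity in both directions chart by chart.

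\emph{Well-definedness and bijectivity.} A Hitchin solution $(\nabla,\varphi)$ gives a polystable Higgs pair $(\bar\partial_\nabla,\varphi)$. I would read this off fiberwise from the fixed-curve Hitchin--Kobayashi correspondence recalled before Lemma~\ref{chtopology:lem:semistability}, restricted to trace-free fields and determinant-one gauges by the scalar-rescaling remark there. Unitary gauges are complex gauges, so the assignment is well defined. For surjectivity at Sobolev order $k$, Lemma~\ref{chtopology:lem:sobolev-regularity} first moves a $\W{k}$ polystable pair to a smooth pair by a determinant-one $W^{k+1,2}$ gauge. The smooth fixed-curve correspondence on $X_b$ then gives a Hitchin metric, and the square-root construction in the proof of Lemma~\ref{chtopology:lem:sobolev-regularity} turns it into a determinant-one complex gauge to a Hitchin solution. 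For injectivity, suppose two Hitchin solutions are related by a complex gauge $g$. By Lemma~\ref{chtopology:lem:sobolev-regularity} we may take both smooth, and then $g$ is smooth. Fixed-curve uniqueness gives a smooth unitary gauge relating them. Its Sobolev class follows from the bootstrapping equation $du=u\Gamma_2-\Gamma_1u$ used in Proposition~\ref{chtopology:prop:graded-classes}. The same argument gives uniqueness of the unitary class.

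\emph{Continuity of the map.} The map is induced by the continuous equivariant map $\operatorname{Sol}_{B,k}\to\cP^{\mathrm{ps}}_{B,k}$, $(\nabla,\varphi)\mapsto(\bar\partial_\nabla,\varphi)$. Since the source carries the quotient topology, the induced map is continuous. The same holds in the smooth category.

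\emph{Continuity of the inverse.} This is the main step, and I would argue locally using the charts of Theorem~\ref{chtopology:thm:open-charts}. Fix a polystable pair $y_0$ and, after a gauge, assume it lies near the Hitchin solution at $x$. By Proposition~\ref{chlocal:prop:kuranishi-reduction}, every pair $y$ in a neighborhood $N\subset F^{-1}(0)$ of $y_0$ in $\cP_{B,k}$ has continuous gauge-fixed parameters $(b(y),v(y))$. Shrinking $N$, we may assume $v(y)\in\widetilde U_x\cap U_1$. The composite $y\mapsto\chi_x(b(y),q_x(v(y)))$ is continuous on $N\cap\cP^{\mathrm{ps}}_{B,k}$. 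By Proposition~\ref{chtopology:prop:graded-classes}, its value is the unitary class of the Hitchin solution complex-gauge equivalent to $y$, which is exactly the inverse map applied to $[y]$.

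The main obstacle is that $N$ is only a neighborhood of one representative, while the quotient topology on $\cP^{\mathrm{ps}}_{B,k}/\mathcal G^{\mathbb C}_{k+1}$ requires continuity on the whole saturated preimage. I would handle this by precomposing with gauge translations. Let $\pi$ denote the map $\cP^{\mathrm{ps}}_{B,k}\to\operatorname{Sol}_{B,k}/\mathcal G_{k+1}$ obtained by composing the quotient projection with the inverse. For each $g$, the composite $\pi\circ(\cdot)^g$ coincides with $\pi$, and the action is continuous. Hence $\pi$ is continuous on the open translates $N^g$, which cover the saturation. Every polystable pair is equivalent to one near some Hitchin solution, so these translates cover $\cP^{\mathrm{ps}}_{B,k}$. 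Therefore $\pi$ is continuous, and it descends to a continuous inverse on the quotient.

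The smooth case runs identically using $\chi_{x,\infty}$ and the smooth gauge-fixing of Lemma~\ref{chlocal:lem:kuranishi-regularity}. Compatibility with the inclusions between Sobolev orders, and with the inclusion of smooth pairs, follows because all the maps are induced by inclusions. On the unitary side these inclusions are already homeomorphisms by Theorem~\ref{chtopology:thm:open-charts}.
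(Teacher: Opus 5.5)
Your proposal is correct and follows essentially the same route as the paper. Bijectivity comes from the Hitchin--Kobayashi correspondence together with Lemma~\ref{chtopology:lem:sobolev-regularity}, and continuity of the inverse comes from composing the Kuranishi gauge-fixed parameters with $\chi_x$ (respectively $\chi_{x,\infty}$), transported by fixed gauge transformations. Your gauge translates $N^g$ are exactly the paper's ``conjugation by the two fixed gauge transformations,'' and compatibility with inclusions comes from Theorem~\ref{chtopology:thm:open-charts} in both.

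The only minor difference is how you show the local inverse is constant on complex-gauge orbits. You read this off from its identification with the already well-defined set-theoretic inverse, whereas the paper cites Proposition~\ref{chlocal:prop:local-gauge-uniqueness}.
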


\begin{proof}
The Hitchin--Kobayashi correspondence gives existence of a Hitchin representative in every polystable complex-gauge orbit, and its uniqueness assertion identifies any two such representatives by a unitary gauge. Lemma~\ref{chtopology:lem:sobolev-regularity} supplies representatives and gauges of the required regularity. The maps in the statement are therefore continuous bijections.

It remains to prove continuity of the inverse. We fix a class in the complex-gauge quotient and choose a representative. By Lemma~\ref{chtopology:lem:sobolev-regularity}, a determinant-one complex gauge carries it to a smooth Higgs pair. A further smooth complex gauge carries the resulting polystable pair to a Hitchin solution. In the Kuranishi neighborhood of this solution, gauge fixing assigns a parameter $(b,v)$ continuously, and Proposition~\ref{chtopology:prop:graded-classes} identifies its image in the corresponding unitary-gauge quotient as
$$
\chi_x(b,q_x(v)).
$$
Proposition~\ref{chlocal:prop:local-gauge-uniqueness} shows that this assignment is constant on complex-gauge orbits. The quotient topology therefore gives a continuous local inverse. Conjugating this local inverse by the two fixed gauge transformations proves continuity near the original class. The same argument in the smooth topology uses $\chi_{x,\infty}$.

Theorem~\ref{chtopology:thm:regularity-independence} identifies the unitary gauge quotients at different regularities. The preceding homeomorphisms commute with the natural inclusion maps, which gives the asserted compatibility on the complex-gauge quotients.
\end{proof}

Via Proposition~\ref{chtopology:prop:harmonic-correspondence}, the local maps $\chi_x$ may be regarded as maps to the underlying set of $\cM$.

\begin{cor}
\label{chtopology:cor:reference-data}
For each $x\in\cM$, the map $\chi_x$ is a homeomorphism onto an open neighborhood of $x$. These maps determine a Hausdorff topology on $\cM$ for which the projection
$$
p:\cM\longrightarrow\cT_g
$$
is continuous. This topology is independent of the Sobolev index, the smooth identifications of the marked fibers with $\Sigma$, the determinant-preserving identifications of their underlying bundles, and the reference Hermitian metrics.
\end{cor}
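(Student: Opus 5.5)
The plan is to transport everything to the unitary quotient of the Hitchin-equation locus and use the charts of Theorem~\ref{chtopology:thm:open-charts}. First I identify $\cM$ as a set with $\operatorname{Sol}/\mathcal G$. Every point of $\cM$ lies over some $X\in\cT_g$. I choose a Teichm\"uller chart $B$ around $X$ as in Lemma~\ref{chlocal:lem:marked-base} and use its smooth identification with $\Sigma$. By the Hitchin--Kobayashi correspondence, the fiber of $\cM$ over $B$ is then in bijection with $\operatorname{Sol}_{B,k}/\mathcal G_{k+1}$. Proposition~\ref{chtopology:prop:harmonic-correspondence} shows this is the same as the polystable complex quotient. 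Lemma~\ref{chlocal:lem:marked-base} says isomorphisms of marked Higgs bundles are represented by bundle isomorphisms covering the identity, and the determinant normalization reduces them to determinant-one gauges. So these quotient classes are exactly marked isomorphism classes. Composing with this bijection, $\chi_x$ becomes a map into $\cM$. By Theorem~\ref{chtopology:thm:open-charts} it is a homeomorphism onto an open subset of $\operatorname{Sol}_{B,k}/\mathcal G_{k+1}$, which I regard as $p^{-1}(B)$.

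Next I give $\cM$ the topology in which a set is open when its intersection with each $p^{-1}(B)$ is open in the quotient topology there. For this to be consistent, two overlapping Teichm\"uller charts $B_1,B_2$ must induce the same topology on $p^{-1}(B_1\cap B_2)$. The two smooth identifications with $\Sigma$ differ by a diffeomorphism isotopic to the identity that depends smoothly on $b$. Over each fiber the two bundle identifications differ by a bundle isomorphism. Pulling back pairs along these maps gives a homeomorphism of the configuration spaces $\mathcal C_{B,k}$ that is equivariant, up to conjugating the gauge groups. It is continuous because the coordinate changes depend smoothly on $b$ and are bounded at every Sobolev order, as remarked after Definition~\ref{chlocal:defn:unitary-pairs}. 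Hence it descends to a homeomorphism of the quotients. The same argument handles a change of reference Hermitian metric $h$: the two unitary quotients both agree with the complex-gauge quotient, which does not involve $h$, by Proposition~\ref{chtopology:prop:harmonic-correspondence}. Independence of the Sobolev index is the second statement of Theorem~\ref{chtopology:thm:open-charts}. Continuity of $p$ is then immediate, since on each chart $p$ is the continuous projection to $B$, as the quotient map commutes with projection to $B$.

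For the Hausdorff property I take two distinct points $y_1,y_2\in\cM$. If $p(y_1)\neq p(y_2)$, disjoint neighborhoods in the Hausdorff space $\cT_g$ pull back to disjoint neighborhoods. If $p(y_1)=p(y_2)$, both points lie in one $p^{-1}(B)$. Proposition~\ref{chlocal:prop:unitary-properness} says the unitary action is proper on the invariant subset $\operatorname{Sol}_{B,k}$, so that quotient is Hausdorff and $y_1,y_2$ can be separated there.

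I expect the main obstacle to be the chart-independence step, namely checking that changing the marking representative and the bundle identification acts continuously on $\mathcal C_{B,k}$ and intertwines the gauge groups. The point is that the diffeomorphisms vary with $b$, so they act on $W^{k,2}$ pairs with parameter-dependent coefficients. I would handle this by invoking the uniform norm equivalence in Proposition~\ref{chlocal:prop:unitary-properness} over relatively compact subsets, and by the smooth dependence of the Beltrami family on $b$.
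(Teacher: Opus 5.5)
Your proposal follows the paper's proof essentially step for step: you identify $p^{-1}(B)$ with the unitary quotient, show that pullback along the $b$-dependent marked identifications is a homeomorphism on chart overlaps, reduce metric- and Sobolev-index independence to Proposition~\ref{chtopology:prop:harmonic-correspondence} and Theorem~\ref{chtopology:thm:open-charts}, and obtain Hausdorffness from Proposition~\ref{chlocal:prop:unitary-properness} together with separation in $\cT_g$. The paper makes explicit two points you leave implicit: first, $E$ is smoothly trivial because $c_1(E)=0$, so all charts share one determinant-preserving reference bundle and any change of identification is a determinant-one complex gauge; second, joint continuity of pullback by $b$-dependent diffeomorphisms at finite Sobolev order comes from uniform operator bounds combined with approximation by smooth pairs, not from smoothness in $b$ alone.
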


\begin{proof}
A complex vector bundle of fixed rank on $\Sigma$ is classified by its first Chern class \cite[\S6, following (6.22)]{AB83}. The prescribed trivialization of $\det E$ gives $c_1(E)=0$, so $E$ is smoothly trivial. If a smooth global frame has determinant $f$ times the prescribed section, multiplying its first element by $f^{-1}$ produces a determinant-preserving frame. Two such identifications differ by a smooth determinant-one complex gauge and therefore determine the same point of the complex-gauge quotient.

Changing the reference Hermitian metric leaves the underlying Dolbeault operator and Higgs field unchanged. On the compact surface $\Sigma$, the corresponding Sobolev norms are equivalent. Proposition~\ref{chtopology:prop:harmonic-correspondence} then identifies the unitary quotient topologies obtained from the two metrics. The same proposition and Theorem~\ref{chtopology:thm:regularity-independence} give independence of the Sobolev index.

On the overlap of two Teichm\"uller charts, Lemma~\ref{chlocal:lem:marked-base} gives a marked holomorphic isomorphism of the curve families whose underlying diffeomorphisms depend smoothly on the base parameter. Pullback of Dolbeault operators and Higgs fields is jointly continuous in every finite Sobolev order. Uniform local operator bounds establish this continuity for smooth pairs, and approximation by smooth sections gives it at finite Sobolev regularity. Applying the same argument to the inverse family shows that the induced map between the quotient spaces is a homeomorphism. The corresponding conclusion in the smooth topology follows by applying these estimates to every derivative. Thus the local quotient topologies agree on overlaps, and the projection to $\cT_g$ is continuous.

Over a fixed Teichm\"uller chart, the quotient of the Hitchin-equation locus by the determinant-one unitary gauge group is Hausdorff by Proposition~\ref{chlocal:prop:unitary-properness}. Proposition~\ref{chtopology:prop:harmonic-correspondence} transfers this property to the polystable complex-gauge quotient. Points lying over distinct elements of $\cT_g$ are separated by disjoint Teichm\"uller neighborhoods. Hence the resulting topology on $\cM$ is Hausdorff.
\end{proof}
\section{Gluing local models and holomorphic families}\label{sec:transitions}

At each polystable point $x$, we choose the domains of Proposition~\ref{chlocal:prop:kuranishi-reduction} and Theorem~\ref{chtopology:thm:open-charts} compatibly. We write $A_x$ for the map defining the Kuranishi family centered at $x$ and set
$$
\begin{gathered}
Z_x=B\times(Q_x\cap U),\qquad
\widetilde U_x=q_x^{-1}(\mathcal W_x)\subset H_x(Q_x\cap U_1),
\\
q_B=\id_B\times q_x\colon
B\times\widetilde U_x\longrightarrow B\times\mathcal W_x.
\end{gathered}
$$
The family extends to $\widetilde\theta_x$ on $B\times\widetilde U_x$, and $\chi_x\colon B\times\mathcal W_x\to\cM$ is a homeomorphism onto an open subset of $\cM$. On the gauge-fixing domain of Proposition~\ref{chlocal:prop:kuranishi-reduction}, the holomorphic classifying map is
$$
c_x(b,\alpha)=\left(b,\mathsf H_1\bigl(\alpha^{\exp u_x(b,\alpha)}\bigr)\right).
$$
Here $u_x$ denotes the gauge-fixing map at $x$. For $F(b,\alpha)=0$,
$$
\alpha^{\exp u_x(b,\alpha)}=A_x\bigl(c_x(b,\alpha)\bigr).
$$
The norm bound in Proposition~\ref{chlocal:prop:kuranishi-reduction} uniquely determines $u_x$ on the chosen domain.

We work with reduced bases throughout. Holomorphic maps from a reduced complex space are determined by their underlying maps of sets and factor through the reductions of their targets. We use these standard facts when gluing the local classifying maps and descending maps to a possibly nonreduced target.

\subsection{Local completeness and transition maps}

The transition maps will follow from local completeness at every point of a Kuranishi slice.

\begin{prop}
\label{prop:nearby-completeness}
Let $z_*=(b_*,v_*)$ lie in the chosen Kuranishi slice at $x$, and let $b_S\colon S\to B\subset\cT_g$ be the classifying map of the underlying marked curve family on a reduced analytic germ $(S,s_*)$. If the fiber over $s_*$ is identified with $\theta_x(z_*)$, then, after replacing $S$ by a neighborhood of $s_*$, there is a holomorphic map
$$
\gamma\colon S\longrightarrow Z_x,\qquad
\gamma(s_*)=z_*,
\qquad
\operatorname{pr}_B\circ\gamma=b_S,
$$
and the given family is isomorphic to $\gamma^*\theta_x$.
\end{prop}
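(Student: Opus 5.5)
The plan is to reduce to the local completeness statement of Theorem~\ref{chlocal:thm:relative-family} at the center $x$, using the gauge-fixing map $u_x$ and the classifying map $c_x$ of Proposition~\ref{chlocal:prop:kuranishi-reduction}. These are defined on an open neighborhood $\mathcal V$ of $(0,0)$ in the ambient space of pairs, not just at the origin. So it suffices to present the given family near $s_*$ by holomorphic coefficients $(b_S(s),\alpha(s))$ with $(b_S(s_*),\alpha(s_*))=(b_*,A_x(z_*))$, and then set $\gamma=c_x(b_S,\alpha)$.

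\textbf{Step 1.} The fiber over $s_*$ is identified with $\theta_x(z_*)$. This fiber is represented on the fixed bundle $E$ by the Dolbeault operator $\bar\partial_E-\iota_{\mu(b_*)}\partial_h+a_x(z_*)$ and the relative Higgs field built from $\varphi+\psi_x(z_*)$. I take the prescribed determinant-preserving smooth identification of the central fiber with $E$ to be the one induced by this identification.

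\textbf{Step 2.} Apply Lemma~\ref{chlocal:lem:smooth-trivialization}, whose proof works verbatim for an arbitrary prescribed central identification, not only the identification with $x$. It produces, after shrinking $S$, a determinant-preserving trivialization $\widehat\Xi_s$ whose coefficients $\alpha(s)$ are holomorphic into every $W^{m,2}$ and satisfy $\alpha(s_*)=A_x(z_*)$. The slice is chosen inside $\mathcal V$ (Proposition~\ref{chlocal:prop:kuranishi-reduction} arranges $(b,A(b,v))\in\mathcal V$ on $\Omega$), so continuity keeps $(b_S(s),\alpha(s))\in\mathcal V$ after shrinking. Since each fiber is a holomorphic Higgs bundle, $F(b_S(s),\alpha(s))=0$.

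\textbf{Step 3.} Set $\gamma(s)=c_x(b_S(s),\alpha(s))$. This map is holomorphic because $c_x$ is holomorphic on $\mathcal V$ and the coefficients are holomorphic. Proposition~\ref{chlocal:prop:kuranishi-reduction} gives
\[
\alpha(s)^{\exp u_x(b_S(s),\alpha(s))}=A_x(\gamma(s))
\qquad\text{and}\qquad
\kappa(\gamma(s))=0.
\]
Because $S$ is reduced, $\gamma$ factors through $Z_x=(\kappa^{-1}(0))_{\mathrm{red}}$, and by construction $\operatorname{pr}_B\circ\gamma=b_S$.

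\textbf{Step 4.} Check that $\gamma(s_*)=z_*$. At $s_*$ the input is $(b_*,A_x(z_*))$, which is already Coulomb. The uniqueness part of Proposition~\ref{chlocal:prop:kuranishi-reduction} gives $u_x(b_*,A_x(z_*))=0$, as used at the end of its proof, so $\gamma(s_*)=(b_*,\mathsf H_1A_x(z_*))=z_*$.

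\textbf{Step 5.} Build the family isomorphism. It is represented by $\exp(-u_x(b_S,\alpha))\circ\widehat\Xi_s$, exactly as in the proof of Theorem~\ref{chlocal:thm:relative-family}. Its matrix in local frames is $C^1$-valued holomorphic in the polydisc parameter and holomorphic on every fiber. Lemma~\ref{chlocal:lem:fiberwise-holomorphicity} then makes it holomorphic on the reduced total space. It preserves the determinant because $u_x$ is trace-free and $\widehat\Xi_s$ has determinant one.

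The main obstacle is Step 2 together with the domain bookkeeping. The earlier lemma is stated with the central fiber identified with $x$ itself and coefficients vanishing at $s_0$, whereas here the central coefficients are $A_x(z_*)\neq0$. I would handle this by rereading that proof with the prescribed identification replaced by the one from Step 1. The construction of $\Xi_s$, the $r$-th root normalization, and the Cauchy-formula holomorphicity never use vanishing of the central coefficients, only that they agree with the prescribed ones at $s_*$. One also needs the slice point to lie in the interior of the gauge-fixing domain $\mathcal V$, which was arranged when the domains were chosen compatibly.
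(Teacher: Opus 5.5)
Your proposal is correct and follows the paper's proof essentially step for step. The shared steps are: coefficients from Lemma~\ref{chlocal:lem:smooth-trivialization} taking the value $(b_*,A_x(z_*))$ at $s_*$, the map $\gamma=c_x(b_S,\alpha)$, the identity $u_x(b_*,A_x(z_*))=0$ by Coulomb uniqueness, factorization through $Z_x$ by reducedness, and the isomorphism $\exp(-u_x)\circ\widehat\Xi_s$, which Lemma~\ref{chlocal:lem:fiberwise-holomorphicity} makes holomorphic. One correction to what you call the main obstacle: Lemma~\ref{chlocal:lem:smooth-trivialization} is already stated for an arbitrary prescribed determinant-preserving central identification, with coefficients agreeing with the prescribed ones at $s_0$. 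So the rereading of its proof that you describe is not needed.
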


\begin{proof}
Lemma~\ref{chlocal:lem:smooth-trivialization} gives coefficients $(b_S(s),\alpha(s))$ whose value at $s_*$ is $(b_*,A_x(z_*))$. After restricting $S$ further, we define
$$
\gamma(s)=c_x(b_S(s),\alpha(s)).
$$
The value $A_x(z_*)$ satisfies the Coulomb condition. The uniqueness of $u_x$ therefore gives $u_x(b_*,A_x(z_*))=0$, and hence $\gamma(s_*)=z_*$. After embedding $S$ in a parameter polydisc, the map $s\mapsto(b_S(s),\alpha(s))$ extends holomorphically to that polydisc, while $c_x$ is holomorphic on the open gauge-fixing domain specified in Proposition~\ref{chlocal:prop:kuranishi-reduction}. Their composition is therefore holomorphic when $S$ is singular as well. Gauge invariance and reducedness imply that $\gamma$ takes values in $Z_x$. Let $\widehat\Xi_s$ be the smooth trivialization from Lemma~\ref{chlocal:lem:smooth-trivialization}. The isomorphism from the given family to $\gamma^*\theta_x$ is represented by $\exp(-u_x(b_S(s),\alpha(s)))\circ\widehat\Xi_s$. Its coefficients extend holomorphically in the parameter with values in $W^{k+1,2}$, and hence in $C^1$ by Sobolev embedding. Lemma~\ref{chlocal:lem:fiberwise-holomorphicity} makes this a holomorphic family isomorphism.
\end{proof}

\begin{proof}[Proof of Theorem~\ref{thm:analytic-space}]
We use the gluing argument of \cite[Theorem~4.1 and Lemmas~4.2--4.4]{Fan2020Construction}, together with Proposition~\ref{prop:nearby-completeness}. For $i=1,2$, write $Z_i=Z_{x_i}$, $\chi_i=\chi_{x_i}$, $\mathcal W_i=\mathcal W_{x_i}$, and $\widetilde U_i=q_{x_i}^{-1}(\mathcal W_i)$. We denote the product quotient map by
$$
q_{B_i}=\id_{B_i}\times q_{x_i}\colon
B_i\times\widetilde U_i\longrightarrow B_i\times\mathcal W_i.
$$
Here $q_{x_i}\colon Q_{x_i}\to Y_{x_i}$ is the analytification of the affine GIT quotient map. The local universal property of the Teichm\"uller family identifies the marked curve families over an overlap of the Teichm\"uller charts. At a point in the overlap of the two moduli charts, choose representatives $z_i\in Z_i\cap(B_i\times\widetilde U_i)$ on the corresponding closed $H_{x_i}$-orbits. Such representatives exist because each orbit in $\widetilde U_i$ meets the small slice. Proposition~\ref{chtopology:prop:graded-classes} shows that the associated Higgs bundles are polystable and isomorphic, and Lemma~\ref{chtopology:lem:sobolev-regularity} shows that every isomorphism represented by a $W^{k+1,2}$ complex gauge transformation is smooth.

Proposition~\ref{prop:nearby-completeness} gives a holomorphic map $\gamma\colon\mathcal V_1\to Z_2$ on a neighborhood $\mathcal V_1$ of $z_1$, with $\gamma(z_1)=z_2$, commuting with the maps to $\cT_g$ and inducing an isomorphism of the corresponding families. We may assume that $\gamma(\mathcal V_1)\subset B_2\times\widetilde U_2$. If $z,z'\in\mathcal V_1$ belong to the same $H_{x_1}$-orbit, their images determine isomorphic Higgs bundles. Corollary~\ref{chlocal:cor:extended-stabilizers} gives
$$
q_{B_2}(\gamma(z))=q_{B_2}(\gamma(z')).
$$
Hence
$$
\widehat\tau_{21}(\sigma\cdot z)=q_{B_2}(\gamma(z)),
\qquad z\in\mathcal V_1,\quad \sigma\in H_{x_1},
$$
defines an $H_{x_1}$-invariant map on $H_{x_1}\cdot\mathcal V_1$. Its restriction to every translate of $\mathcal V_1$ is holomorphic. Since the source is reduced, these holomorphic restrictions agree on their overlaps. Thus $\widehat\tau_{21}$ is holomorphic.

The same construction applies at every point of the chart overlap. The identifications of the marked curve families used in constructing $\gamma$ are the holomorphic isomorphisms supplied by the local universal property of the Teichm\"uller family.

The analytic Hilbert quotient $q_{B_1}$ sends invariant relatively closed subsets to closed subsets. It follows that an invariant open set containing a closed orbit contains the inverse image of a quotient neighborhood \cite[Section~3.1]{HeinznerHuckleberry1999}. Hence there is a neighborhood $O_1\subset B_1\times\mathcal W_1$ of $q_{B_1}(z_1)$ such that
$$
q_{B_1}^{-1}(O_1)\subset H_{x_1}\cdot\mathcal V_1.
$$
After shrinking $O_1$ so that $\chi_1(O_1)$ lies in the overlap of the two moduli charts, Theorem~\ref{chlocal:prop:invariant-descent} gives a holomorphic map
$$
\tau_{21}\colon O_1\longrightarrow B_2\times\mathcal W_2,
\qquad
\widehat\tau_{21}=\tau_{21}\circ q_{B_1}.
$$

For $t\in O_1$, we choose a representative $z\in\mathcal V_1$ of the closed orbit over $t$. The Higgs bundles represented by $z$ and $\gamma(z)$ are polystable and isomorphic. Proposition~\ref{chtopology:prop:graded-classes} then gives
$$
\chi_2\circ\tau_{21}(t)=\chi_1(t).
$$
The construction with the indices reversed produces the inverse transition. These transitions commute with the projections to $\cT_g$. On triple overlaps they satisfy
$$
\tau_{32}\circ\tau_{21}=\tau_{31},
$$
because both sides agree on points and the source is reduced.

These transitions glue the local reduced complex-analytic structures on the Hausdorff quotient and preserve the projection to $\cT_g$. The resulting space is second-countable: a countable cover by Teichm\"uller charts suffices, and over each chart the quotient of the Hitchin-equation locus by the determinant-one unitary gauge group is the image, under an open quotient map, of a subspace of a separable Sobolev space of connection--Higgs pairs. The fixed smooth reference bundle in Corollary~\ref{chtopology:cor:reference-data} gives this description, and the images of a countable basis form a countable basis of the quotient. Theorem~\ref{chlocal:thm:saturated-quotient} gives the stated local germs. Combining two such atlases proves independence of the slices. Lemma~\ref{chlocal:lem:kuranishi-regularity}, Theorem~\ref{chtopology:thm:regularity-independence}, and Proposition~\ref{chtopology:prop:harmonic-correspondence} identify the smooth and Sobolev quotient constructions and the complex- and unitary-gauge quotient topologies.
\end{proof}

\subsection{Normality}\label{sec:normality}

The trace splitting relates our local model to the fixed-curve $\mathrm{GL}_r(\mathbb C)$-Higgs moduli space.

\begin{prop}\label{prop:scalar-splitting}\label{prop:scalar-quotient}\label{prop:determinant-normality}
With
$$
S_X:=H^1(X,\cO_X)\oplus H^0(X,K_X),
$$
the germ of the fixed-curve $\mathrm{GL}_r(\mathbb C)$-Higgs moduli space at the point obtained from $x$ by forgetting the fixed-determinant and trace-free conditions is $(Y_x\times S_X,([0],0))$. The space $Y_x$ is normal.
\end{prop}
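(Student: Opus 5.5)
Our approach is to compare the trace-free data at $x$ with the full $\mathrm{GL}_r$ data at the underlying Higgs bundle $x'=(X,E,\varphi)$, and to deduce normality of $Y_x$ from Fan's normality theorem for the fixed-curve $\mathrm{GL}_r$ moduli space \cite[Theorems~A--C]{Fan2020Construction}. There are four steps.

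\emph{Splitting the deformation data.} By Lemma~\ref{chlocal:lem:harmonic-equivariance} the splitting $\End E=\End_0E\oplus\C\Id_E$ is parallel and preserved by $D'$, $D''$, the adjoints and the harmonic projections. The full harmonic spaces therefore split as
$$
H^1_{\mathrm{GL}}=V_x\oplus S_X,\qquad H^0_{\mathrm{GL}}=\C\Id_E\oplus H^0,\qquad H^2_{\mathrm{GL}}=H^2\oplus H^2(X,\cO_X)\otimes\dots,
$$
where the scalar summand of $H^1_{\mathrm{GL}}$ is the harmonic $(a,\psi)=(\alpha\Id,\beta\Id)$ with $\alpha\in H^{0,1}$ and $\beta\in H^0(K_X)$, namely $S_X$. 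The bracket $[v+s,v+s]$ equals $[v,v]$, since scalars are central and their trace-free parts vanish. So the full quadratic cone is $Q_{x'}=Q_x\times S_X$, with the $\mathrm{GL}$-stabilizer $G_{x'}=\Aut(E,\varphi)$ acting trivially on $S_X$.

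\emph{Comparing the group quotients.} We have $G_{x'}=\C^*\cdot H_x$, and $\C^*\Id$ acts trivially on $V_x$. Hence $\C[Q_x]^{G_{x'}}=\C[Q_x]^{H_x}$: any $H_x$-invariant is automatically invariant under $\C^*H_x$. It follows that $Q_{x'}/\!/G_{x'}=Y_x\times S_X$. Combined with Fan's local model \cite[Theorem~B]{Fan2020Construction}, which states that the $\mathrm{GL}_r$ germ is the affine quotient of the full quadratic cone by the full stabilizer, this gives the stated germ $(Y_x\times S_X,([0],0))$. Reducedness is compatible here, because $(Q_x\times S_X)_{\red}=(Q_x)_{\red}\times S_X$.

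\emph{Normality.} Fan proves that the $\mathrm{GL}_r$ moduli space is normal. A product $Y\times\C^n$ is normal at $(y,0)$ exactly when $Y$ is normal at $y$. One direction comes from restricting to the slice $Y\times\{0\}$, using that normality descends along the faithfully flat projection $Y\times\C^n\to Y$: the local ring $\cO_{Y,y}\to\cO_{Y\times\C^n,(y,0)}$ is faithfully flat, and normality descends under faithfully flat maps. So $Y_x$ is normal at $[0]$. To get normality at every point of $Y_x$, we use the $\C^*$-scaling of the cone: $Y_x$ is an affine cone whose non-normal locus is closed and $\C^*$-stable, so if nonempty it contains the vertex $[0]$. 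Alternatively, we can argue directly that $\C[Q_x]^{H_x}$ is normal, since invariants of a normal domain under a reductive group are normal; this needs $Q_x$ itself normal, which is the content of Fan's cone result.

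The main obstacle is matching our germ to Fan's precisely, including the convention that Fan's cone is reduced, his treatment of the possibly disconnected $\Aut(E,\varphi)$, and the fact that Fan's normality is stated for the algebraic moduli space near $x'$. If the direct citation of the local-model identification is delicate, we will instead argue algebraically: we take Fan's result that the reduced quadratic cone $Q_{x'}$ is normal, which is the key input in his proof. Then $Q_x$ is normal, because $Q_{x'}=Q_x\times S_X$ and normality descends as above. Finally $\C[Q_x]^{H_x}$ is normal, since it is a ring of invariants of a normal domain (or a product of normal domains) under a reductive group.
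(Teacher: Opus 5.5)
Your main line is correct and follows the paper's proof. The trace splitting gives the cone $Q_x\times S_X$, with $\Aut(E,\varphi)=\C^*H_x$ acting through $H_x$, and Fan's local model then gives the germ $Y_x\times S_X$, normal at $([0],0)$. Your faithfully flat descent to $Y_x$ is the same argument as the paper's retraction $\iota^\#\circ\operatorname{pr}^\#=\id$, and your closed, $\C^*$-stable non-normal-locus argument is the paper's contraction step in different words. Two small points, neither affecting the argument: the fallback through normality of $Q_x$ is unnecessary and rests on a cone-normality attribution to Fan that the paper does not use, and the scalar part of the obstruction space is $H^1(X,K_X)$, not $H^2(X,\cO_X)=0$.
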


\begin{proof}
The trace splitting $\End E=\End_0E\oplus\cO_X\Id_E$ splits the deformation complex and its harmonic operators, with scalar deformation subcomplex $[\cO_X\xrightarrow{0}K_X]$. Scalar forms graded-commute with all endomorphism-valued forms, and every graded commutator has zero trace. Hence the quadratic equation depends only on the trace-free summand, and its reduced zero locus is $Q_x\times S_X$. Moreover,
$$
\Aut(E,\varphi)=\C^*H_x.
$$
Indeed, division by an $r$th root of the constant determinant places every automorphism in $\C^*H_x$, including those outside the identity component. The scalar factor acts trivially on $Q_x$. Consequently
$$
\C[Q_x\times S_X]^{\Aut(E,\varphi)}
=\C[Q_x]^{H_x}\otimes\C[S_X],
$$
and, after analytification, the affine quotients satisfy
$$
(Q_x\times S_X)/\!/\Aut(E,\varphi)\simeq Y_x\times S_X.
$$
By \cite[Theorem~5.3 and Lemma~6.7]{Fan2020Construction}, this is the $\mathrm{GL}_r(\mathbb C)$-Higgs moduli germ and is normal near $([0],0)$.

We denote the projection by $\operatorname{pr}\colon Y_x\times S_X\to Y_x$ and the zero section by $\iota\colon Y_x\to Y_x\times S_X$. For $y$ near $[0]$, the induced maps on local rings satisfy
$$
\iota^\#\circ\operatorname{pr}^\#=\id_{\cO_{Y_x,y}}.
$$
Thus $\operatorname{pr}^\#$ is injective. Since $\cO_{Y_x\times S_X,(y,0)}$ is a normal local domain, $\cO_{Y_x,y}$ is a domain as well. If $a/b\in\operatorname{Frac}(\cO_{Y_x,y})$ is integral over $\cO_{Y_x,y}$, then its image under $\operatorname{pr}^\#$ is integral over the normal local ring $\cO_{Y_x\times S_X,(y,0)}$. Hence
$$
\operatorname{pr}^\#a=\operatorname{pr}^\#b\,c'
$$
for some $c'\in\cO_{Y_x\times S_X,(y,0)}$. Applying $\iota^\#$ gives $a=b\,\iota^\#c'$, so $\cO_{Y_x,y}$ is normal. Thus $Y_x$ is normal near $[0]$. The positive grading on $\C[Q_x]^{H_x}$ has degree-zero part $\C$, and the induced $\C^*$-action contracts $Y_x$ to $[0]$. Since every nonzero scalar acts by an automorphism, normality holds throughout $Y_x$.
\end{proof}

\begin{proof}[Proof of Theorem~\ref{thm:normal-model}]
Theorem~\ref{thm:analytic-space} gives the germs $B\times Y_x$ over $B\subset\cT_g$. They are normal by the preceding proposition and the standard preservation of normality under product with a complex manifold. The stabilizer assertion follows from Corollary~\ref{chlocal:cor:extended-stabilizers}.
\end{proof}

\subsection{Semistable families and the coarse moduli property}\label{sec:coarse}

To classify semistable families, we construct local lifts near the Seshadri graded object of the central fiber.

\begin{prop}
\label{prop:semistable-lifting}
Let $(S,s_0)$ carry a family in Definition~\ref{defn:family-functors}, and let $x$ be the Seshadri graded object of its central fiber. After replacing $S$ by a neighborhood of $s_0$, there exist a holomorphic map
$$
\gamma\colon S\longrightarrow B\times(\widetilde U_x\cap U)
$$
and a determinant-preserving holomorphic isomorphism from the given family to $\gamma^*\widetilde\theta_x$.
\end{prop}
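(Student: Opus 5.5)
The plan is to reduce to the Kuranishi family at the graded object by exhibiting the central fiber itself as a point of the extended family, and then repeat the argument of Proposition~\ref{prop:nearby-completeness} with the gauge-fixing map centered at $x$.

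\textbf{Step 1: place the central fiber in $\widetilde U_x$.} Let $x_0$ denote the central fiber, which is semistable with graded object $x$. By the closed-orbit statement for the fixed curve \cite[Lemma~3.7]{Fan2020Construction}, or by the Yang--Mills--Higgs flow \cite{Wilkin08}, the polystable orbit of $x$ lies in the closure of the determinant-one complex gauge orbit of $x_0$. So there is a representative $\alpha_0$ of $x_0$, in the fixed bundle $E$ over the curve $X=X_0$, with $\|\alpha_0\|_{k,2}$ as small as we like. Apply the gauge-fixing map $u_x$ of Proposition~\ref{chlocal:prop:kuranishi-reduction}. Since $F(0,\alpha_0)=0$, we get $\alpha_0^{\exp u_x}=A_x(0,v_0)$ with $v_0=\mathsf H_1(\alpha_0^{\exp u_x})\in Q_x\cap U$ small. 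By Theorem~\ref{chtopology:prop:graded-classes}, $\widetilde\theta_x(0,v_0)$ has graded object the bundle on the closed orbit in $q_x^{-1}(q_x(v_0))$. That graded object is $x$, so $q_x(v_0)=[0]$, and hence $v_0\in\widetilde U_x\cap U$ for any choice of $\mathcal W_x$. Shrinking so that $v_0\in U_1$ is harmless.

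\textbf{Step 2: classifying map.} Compose the identification of the central fiber with $\theta_x(0,v_0)$ with the gauge $\exp(-u_x)$ from Step~1; this gives a determinant-preserving identification. Feed it to Lemma~\ref{chlocal:lem:smooth-trivialization}, which yields holomorphic coefficients $(b_S(s),\alpha(s))$ with $\alpha(s_0)=A_x(0,v_0)$. Set
$$
\gamma(s)=c_x\bigl(b_S(s),\alpha(s)\bigr).
$$
Because $(0,A_x(0,v_0))$ lies in the open gauge-fixing domain $\mathcal V$, the map $\gamma$ is defined after shrinking $S$. By uniqueness of $u_x$, $\gamma(s_0)=(0,v_0)$. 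Reducedness and gauge covariance give $\kappa\circ\gamma=0$, so $\gamma$ takes values in $Z_x=B\times(Q_x\cap U)$. Openness of $\widetilde U_x$ in $Q_x$, after shrinking $S$, puts $\gamma$ into $B\times(\widetilde U_x\cap U)$. On this set, $\widetilde\theta_x$ agrees with $\theta_x$.

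\textbf{Step 3: the isomorphism.} Exactly as in Theorem~\ref{chlocal:thm:relative-family} and Proposition~\ref{prop:nearby-completeness}, the map $\exp(-u_x(b_S,\alpha))\circ\widehat\Xi_s$ is a determinant-one gauge. It is $W^{k+1,2}$-holomorphic in the parameter and fiberwise holomorphic, so Lemma~\ref{chlocal:lem:fiberwise-holomorphicity} makes it a holomorphic isomorphism onto $\gamma^*\widetilde\theta_x$.

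\textbf{Main obstacle.} The hard point is Step~1, namely producing $v_0$ small and within the domain $\mathcal V$. The difficulty is that $x_0$ is not close to $x$ in a fixed gauge, only in orbit closure. I would first try the Yang--Mills--Higgs flow on $X_0$: its $W^{k,2}$ convergence to a Hitchin representative of the graded object \cite[Theorems~1.1 and~5.3]{Wilkin08}, after a unitary gauge matching that limit to $x$, gives pairs complex-gauge equivalent to $x_0$ at arbitrarily small distance from $x$. One must also check that this gauge has determinant one, after rescaling by a constant root as explained before Lemma~\ref{chtopology:lem:semistability}. The final step is to confirm that the limit point $q_x(v_0)$ equals $[0]$, using the Seshadri identification in Proposition~\ref{chtopology:prop:graded-classes} together with Corollary~\ref{chlocal:cor:extended-stabilizers}.
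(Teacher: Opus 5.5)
Your proposal is correct and has the same overall structure as the paper's proof. Both arguments first move the central fiber by a determinant-one complex gauge until its coefficients lie in the gauge-fixing domain near $(0,0)$. Both then set $\gamma=c_x\circ(b,\alpha)$, and both take the isomorphism to be $\exp(-u_x)$ composed with the trivialization of Lemma~\ref{chlocal:lem:smooth-trivialization}, made holomorphic by Lemma~\ref{chlocal:lem:fiberwise-holomorphicity}.

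The real difference is in the step you flagged as the main obstacle.

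\begin{itemize}
\item The paper does not use orbit-closure theorems or the Yang--Mills--Higgs flow there. It takes a smooth splitting of a Jordan--H\"older filtration, in which the central pair is block upper triangular. It then uses the explicit determinant-one one-parameter subgroup $g_t$, which multiplies the off-diagonal block from $E_j^{\mathrm{gr}}$ to $E_i^{\mathrm{gr}}$ by $t^{r(j-i)}$. This converges to the graded object in every Sobolev norm as $t\to0$. The paper then applies the fixed $g_t$ to the whole family and checks that its Beltrami term $-g_t^{-1}\iota_{\mu(b)}\partial_hg_t$ vanishes at $s_0$.
\item Your route replaces this elementary degeneration with heavier inputs: Fan's orbit-closure lemma or Wilkin's flow convergence, uniqueness of Hitchin solutions up to unitary gauge, and the determinant rescaling.
\item In exchange, you need no filtration or splitting. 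By folding the gauge into the prescribed central identification of Lemma~\ref{chlocal:lem:smooth-trivialization}, you also avoid the paper's separate bookkeeping for a fixed gauge acting on a family with varying $b$.
\item The paper's explicit degeneration has one advantage: it makes visible how $\gamma(s_0)$ depends on the filtration, the splitting and $t$.
\end{itemize}

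Two small points should be tightened.

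\begin{itemize}
\item The composite gauge you feed into Lemma~\ref{chlocal:lem:smooth-trivialization} must be smooth. This holds by Lemma~\ref{chtopology:lem:sobolev-regularity}, since the gauge identifies two smooth pairs: the central fiber and $A_x(0,v_0)$.
\item Your use of Proposition~\ref{chtopology:prop:graded-classes} to conclude $q_x(v_0)=[0]$ already presupposes $v_0\in\widetilde U_x$. So the phrase ``for any choice of $\mathcal W_x$'' is circular as written. Nothing is lost, though: $\widetilde U_x$ is an open neighborhood of $0$ in $Q_x$, and you can make $v_0$ as small as you like. This places $v_0$ in $\widetilde U_x$ directly, which is exactly how the paper uses continuity of $c_x$.
\end{itemize}
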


\begin{proof}
We choose a smooth splitting of a Jordan--H\"older filtration of the central fiber and denote the graded summands by $E_i^{\mathrm{gr}}$, with $\rank E_i^{\mathrm{gr}}=r_i$ \cite[Proposition~5.1]{Wilkin08}. The splitting preserves the determinant identification induced by the filtration, and the central Dolbeault operator and Higgs field are block upper triangular. For $0<t\leq1$, we define a determinant-one complex gauge transformation by
$$
g_t|_{E_i^{\mathrm{gr}}}
=t^{\,r i-\sum_j j r_j}\Id_{E_i^{\mathrm{gr}}}.
$$
Indeed,
$$
\sum_i r_i\left(r i-\sum_j j r_j\right)=0.
$$
Under the right gauge action, an off-diagonal block from $E_j^{\mathrm{gr}}$ to $E_i^{\mathrm{gr}}$, with $i<j$, is multiplied by $t^{r(j-i)}$. The diagonal blocks remain fixed because $g_t$ is constant on the graded summands and commutes with the graded Dolbeault operator. Thus the transformed Higgs pair on the central fiber converges to its graded object smoothly and in every finite Sobolev norm. A determinant-preserving smooth identification carries this limit to the chosen harmonic representative of $x$.

Lemma~\ref{chlocal:lem:smooth-trivialization} extends the chosen central identification to a smooth trivialization $\widehat\Xi_s$ of the family, giving coefficients $(b(s),\alpha(s))$ with $b(s_0)=0$. Continuity of $c_x$ provides a neighborhood $\mathcal V$ of $(0,0)$ in $F^{-1}(0)$ such that
$$
c_x(\mathcal V)
\subset B\times(\widetilde U_x\cap U).
$$
We choose $t>0$ so that $(0,\alpha(s_0)^{g_t})\in\mathcal V$ and apply this fixed gauge transformation to the entire family.

The action of this fixed gauge transformation is holomorphic by \eqref{chlocal:eq:right-gauge}. Its Beltrami term is $-g_t^{-1}\iota_{\mu(b)}\partial_hg_t$ and vanishes at $s_0$ because $\mu(b(s_0))=0$. Continuity in a fixed $W^{k,2}$ norm, with $k\geq4$, gives
$$
\bigl(b(s),\alpha(s)^{g_t}\bigr)\in\mathcal V
$$
after restricting $S$. We then set
$$
\gamma(s)=c_x\bigl(b(s),\alpha(s)^{g_t}\bigr).
$$
Gauge invariance and reducedness imply that $\gamma$ takes values in $B\times(\widetilde U_x\cap U)$. With the right-action convention, the isomorphism from the given family to $\gamma^*\widetilde\theta_x$ is represented by
$$
\exp\!\left(-u_x\bigl(b(s),\alpha(s)^{g_t}\bigr)\right)g_t^{-1}\circ\widehat\Xi_s.
$$
It has determinant one. Its coefficients extend holomorphically in the parameter with values in $W^{k+1,2}$, and hence in $C^1$. The gauge equations and Lemma~\ref{chlocal:lem:fiberwise-holomorphicity} make it a holomorphic isomorphism of families. Proposition~\ref{chtopology:prop:graded-classes} gives $q_B(\gamma(s_0))=(0,[0])$. The point $\gamma(s_0)$ depends on the filtration, the smooth splitting, and the choice of $t$. If the central fiber is non-polystable, then $\gamma(s_0)\ne(0,0)$, since the fiber at $(0,0)$ is polystable.
\end{proof}

\begin{proof}[Proof of Theorem~\ref{thm:coarse}]
For a local lift $\gamma$ supplied by Proposition~\ref{prop:semistable-lifting}, Proposition~\ref{chtopology:prop:graded-classes} shows that
$$
\chi_x\circ q_B\circ\gamma
$$
classifies the Seshadri graded objects of the fibers. Reducedness makes these maps agree on overlaps, and they define $\eta$. The pointwise description proves naturality, compatibility with the map to $\cT_g$, and equality precisely for fiberwise S-equivalent families. Polystable families over a point give $\FS(\pt)\simeq\cM$.

We fix a natural transformation $\Theta\colon\Fss\to\Hol(-,T)$ and apply it to $\mathcal E_x=\widetilde\theta_x$ on $B\times\widetilde U_x$, whose fibers are semistable. For every $\sigma\in H_x$, pullback by $(b,v)\mapsto(b,v^\sigma)$ gives a family isomorphic to $\mathcal E_x$. Hence
$$
\Theta_x:=\Theta_{B\times\widetilde U_x}(\mathcal E_x)
$$
is $H_x$-invariant. Theorem~\ref{chlocal:prop:invariant-descent} gives
$$
\Theta_x=\overline\Theta_x\circ q_B
$$
for a holomorphic map $\overline\Theta_x\colon B\times\mathcal W_x\to T$, as in \cite[Proposition~3.9 and Corollary~3.10]{Fan2020QuasiProjectivity}. Thus $\Theta_x$ is constant on every fiber of $q_B$.

For $y\in B\times\mathcal W_x$, we choose $z\in q_B^{-1}(y)$ in the unique closed $H_x$-orbit. Naturality gives
$$
\overline\Theta_x(y)=\Theta_{\pt}([(\mathcal E_x)_z]),
$$
which depends only on the isomorphism class of the marked polystable Higgs bundle. The maps $\overline\Theta_x$ therefore agree on chart overlaps and glue to $f\colon\cM\to T$. Since the chart overlaps are reduced, pointwise equality implies equality of the holomorphic maps even when $T$ is nonreduced.

For a family locally isomorphic to $\gamma^*\mathcal E_x$, naturality gives
$$
\Theta_S(\mathcal E)
=\Theta_x\circ\gamma
=\overline\Theta_x\circ q_B\circ\gamma
=f\circ\eta_S(\mathcal E).
$$
These local identities prove the required factorization and show that every such transformation takes equal values on fiberwise S-equivalent families. Polystable families over a point determine $f$ uniquely, again by reducedness. Since $\eta$ factors through $\FS$, pulling back transformations along $\Fss\to\FS$ proves the corresponding universal property for $\FS$.
\end{proof}

\section{Natural actions and comparison on the stable locus}\label{sec:actions}

The coarse moduli property determines the natural actions, while the local models describe the stable locus.

\begin{cor}\label{prop:natural-actions}\label{prop:stable-charts}
The mapping class group $\Gamma_g$ and $\C^*$ act holomorphically on $\cM$ by changes of marking and Higgs-field scaling, respectively. These actions commute. The stable locus is open, and at each stable point $x$,
$$
H_x=\{\zeta\Id_E:\zeta^r=1\},\qquad Q_x=V_x.
$$
The local Kuranishi charts on the stable locus are complex manifolds of the form $B\times U$, and the associated families are holomorphic.
\end{cor}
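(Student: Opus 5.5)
The plan is to obtain both actions from the coarse moduli property of Theorem~\ref{thm:coarse}, and to read off the stable-locus statements from the local models of Theorems~\ref{thm:analytic-space} and~\ref{thm:normal-model}.

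\textbf{Mapping class group action.} For $\gamma\in\Gamma_g$, changing the marking of every family by $\gamma$ gives a natural transformation
$$
\Fss\longrightarrow\Fss.
$$
It preserves semistability, determinant trivializations and S-equivalence. Composing with $\eta$ gives a natural transformation $\Fss\to\Hol(-,\cM)$. By Theorem~\ref{thm:coarse}, this factors uniquely through a holomorphic map $f_\gamma\colon\cM\to\cM$. Uniqueness in the universal property yields $f_{\gamma\gamma'}=f_\gamma\circ f_{\gamma'}$ and $f_{\id}=\id$, so each $f_\gamma$ is biholomorphic. Because the composition with $p$ is the classifying map of the remarked curve family, the action covers the action of $\Gamma_g$ on $\cT_g$.

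\textbf{Scaling action.} Given $\Theta$ on a family $\mathcal E$ over $S$, I form the family $\operatorname{pr}_2^*\mathcal E$ over $\C^*\times S$, with Higgs field multiplied by the coordinate $\lambda$. This family is again semistable, since scaling preserves Higgs-invariant subsheaves. Applying $\eta$ gives a transformation $\Fss\to\Hol(\C^*\times-,\cM)$. To descend it, I would apply Theorem~\ref{thm:coarse} on the reduced base $\C^*\times(B\times\widetilde U_x)$ with the family $\lambda\cdot\widetilde\theta_x$. This gives a map that is invariant under $H_x$ acting on the second factor. Theorem~\ref{chlocal:thm:saturated-quotient}, applied to the product quotient $\C^*\times B\times\widetilde U_x\to\C^*\times B\times\mathcal W_x$, then descends it to a holomorphic map $\C^*\times(B\times\mathcal W_x)\to\cM$. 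These local maps agree pointwise and the bases are reduced, so they glue to a holomorphic map $\C^*\times\cM\to\cM$.

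This descent of a parametrized natural transformation is the step I expect to be the main obstacle. The version of the universal property in Theorem~\ref{thm:coarse} does not directly cover an extra parameter, so the descent has to be redone on products as above. Commutativity of the two actions holds pointwise, because remarking and scaling commute on families. By reducedness, pointwise equality gives equality of the holomorphic maps.

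\textbf{Stable locus.} At a stable $x$, the Higgs endomorphisms are the scalars. Indeed, a stable Higgs bundle is simple: any endomorphism minus an eigenvalue has a Higgs-invariant kernel and image, which contradicts the strict slope inequalities unless it vanishes. The determinant-one scalars are exactly the $r$th roots of unity, so $H_x=\{\zeta\Id_E:\zeta^r=1\}$. It follows that $\mathfrak h_x=H^0=0$. By Lemma~\ref{chlocal:lem:harmonic-duality} the obstruction space is $H^2\simeq\mathfrak h_x^*=0$, so $Q_x=V_x$. The finite central group acts trivially by conjugation, hence $Y_x=V_x$. Theorem~\ref{thm:normal-model} then identifies the germ with the smooth germ $B\times U$, and Theorem~\ref{chlocal:thm:relative-family} shows that the associated families are holomorphic.

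For openness, I use the chart $B\times\mathcal W_x$ at a stable point. In this chart the quotient map is the identity and every fiber is stable. This follows from the stabilizer identification of Corollary~\ref{chlocal:cor:extended-stabilizers} once the chart is shrunk, since a polystable non-stable bundle has a positive-dimensional stabilizer and all stabilizers here are finite. Hence the stable locus is open.
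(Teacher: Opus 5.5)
Your proposal is correct and takes essentially the same route as the paper. Both actions come from the coarse moduli property, with the scaling map descended through the product Hilbert quotient on $\C^*\times B\times\widetilde U_x$ and the group laws and commutation obtained pointwise via reducedness. The stable-locus claims come from simplicity, $H^2\simeq\mathfrak h_x^*=0$, and the incompatibility of finite nearby stabilizers with the positive-dimensional determinant-one automorphism group of a strictly polystable bundle. Obtaining the mapping class group action directly from the uniqueness clause of Theorem~\ref{thm:coarse}, instead of re-running the local descent of $\widetilde\theta_x$, is only a cosmetic repackaging of the same argument.
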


\begin{proof}
We fix $[\rho]\in\Gamma_g$ and choose a representative $\rho\in\Diff^+(\Sigma)$. The change of marking
$$
\mathfrak m\longmapsto\mathfrak m\circ\rho^{-1}
$$
acts on holomorphic marked families. Applying the coarse moduli property to $\widetilde\theta_x$ gives an $H_x$-invariant classifying map on $B\times\widetilde U_x$, which descends through $q_B$. These local maps agree on overlaps, and the construction with $\rho^{-1}$ gives the inverse.

For the scaling action, we use the family $(E,\lambda\varphi)$ on $\C^*\times B\times\widetilde U_x$. Scaling preserves invariant subbundles, stability, and graded objects. The classifying map descends holomorphically because analytic Hilbert quotients are preserved under products with complex manifolds carrying the trivial $H_x$-action \cite[Section~3.1, Theorem~3.1.4]{HeinznerHuckleberry1999}. The group laws, commutation relation, and identities over $\cT_g$ hold for every marked Higgs bundle and hence as holomorphic maps by reducedness. Scaling preserves the automorphism group, and a change of marking induces an isomorphism of automorphism groups.

The standard simplicity of stable Higgs bundles gives
$$
H_x=\{\zeta\Id_E:\zeta^r=1\}.
$$
This group acts trivially on the deformation complex. Its Lie algebra is zero, and Serre duality gives $H^2=0$; hence $Q_x=V_x$. Corollary~\ref{chlocal:cor:local-stabilizers} gives the same finite stabilizer nearby. The central action on $V_x$ is trivial, so every nearby orbit is closed and every Higgs bundle represented by a nearby parameter is polystable. If such a Higgs bundle were strictly polystable, it would have at least two stable summands of ranks $r_1$ and $r_2$. Multiplication by $t^{r_2}$ and $t^{-r_1}$ on these two summands, and by the identity on the remaining summands, would define a positive-dimensional subgroup of its determinant-one automorphism group. This contradicts the stabilizer calculation. After restricting the chart, every represented Higgs bundle is stable.
\end{proof}

For $r\geq2$, let $P_{\mathrm{SL}}$ be the principal $\SL_r(\C)$ frame bundle defined by the chosen determinant trivialization. Stability for principal and associated vector Higgs bundles agrees \cite[Section~4.2, Remark~4.3 and Theorem~4.4]{GarciaPradaGothenMundet2012}. A marked holomorphic curve automorphism isotopic to the identity is trivial, so these stable objects lie in the regularly stable locus considered in \cite{CTW25}. At $(J,\Phi)$, the almost complex structure on the CTW slice is normalized by
$$
I_{(J,\Phi)}(M,\theta)=(JM,J\theta)=(JM,i\theta),
\qquad \Phi=2i\varphi.
$$
Indeed, $[\Phi,\Phi^{*h}]=4[\varphi,\varphi^{*h}]$, so the equation
$$
F_\nabla+\frac14[\Phi,\Phi^{*h}]=0
$$
agrees with the real Hitchin equation used here. On the stable locus,
$$
H_x=\{\zeta\Id_E:\zeta^r=1\},
\qquad
H_x/\{\zeta\Id_E:\zeta^r=1\}=\{1\}.
$$
The scalar subgroup acts trivially.

With these conventions, the holomorphic families defining the two local atlases give classifying maps in both directions.

\begin{prop}
\label{prop:ctw-family-comparison}
The slices constructed in \cite[Proposition~4.19]{CTW25} define holomorphic marked Higgs families. Conversely, the families over our Kuranishi charts on the stable locus define holomorphic maps to the stable joint moduli space constructed in \cite{CTW25}. Under $\Phi=2i\varphi$, both maps classify the same marked Higgs bundles.
\end{prop}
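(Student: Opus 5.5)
The proof has two directions. In each, a slice family on one side is converted into a holomorphic family on the other, and the existing local completeness and coarse moduli results produce the classifying maps.

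\emph{From CTW slices to our families.} Over a CTW slice, \cite[Proposition~4.19]{CTW25} gives smooth data $(J,\nabla,\Phi)$ depending holomorphically on the slice parameter for the almost complex structure $I_{(J,\Phi)}$ normalized above. Set $\varphi=\Phi/(2i)$. The Dolbeault operator $\nabla^{0,1}_J$ and the $(1,0)_J$-form $\varphi$ define, on each fiber, a holomorphic Higgs bundle with trace-free Higgs field, and the unitary trivialization of $\det$ becomes a holomorphic determinant trivialization. The marked curves $J$ give a holomorphic classifying map to $\cT_g$, since the slice projection to Teichm\"uller space is holomorphic in CTW.

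After composing with the Beltrami identification of Lemma~\ref{chlocal:lem:marked-base}, I will express these data in the coordinates $(b,\alpha)$ of Proposition~\ref{chlocal:prop:relative-equations}. The normalization $I(M,\theta)=(JM,i\theta)$ is exactly what makes $(b,\alpha)$ holomorphic in the slice parameter. The coefficients are then holomorphic with values in every $W^{m,2}$, so Lemma~\ref{chlocal:lem:fiberwise-holomorphicity} shows that the total Dolbeault operator and the relative Higgs field are holomorphic on the total space. Every fiber is stable, hence lies in $\Fss$. Theorem~\ref{thm:coarse} then gives a holomorphic classifying map into $\cMs$, and it is the identity on isomorphism classes, because the relation $\Phi=2i\varphi$ identifies the two Hitchin equations.

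\emph{From our families to the CTW space.} On the stable locus, Corollary~\ref{prop:stable-charts} gives charts $B\times U$ that are manifolds carrying holomorphic families. Along the family, apply the gauge $g^+(b,v)$ of Proposition~\ref{chtopology:prop:real-moment-reduction}. By Lemma~\ref{chtopology:lem:zero-section} and the flow result, $\nu_{\R}\equiv0$ here, since $\fk_x=0$ so $\cH^2_{\R,x}=0$. This produces Hitchin solutions $(\nabla^+,\varphi^+)$ depending smoothly on $(b,v)$. Setting $\Phi=2i\varphi^+$ gives a smooth map into the CTW solution space, and hence into their quotient.

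I then check holomorphicity. Via the CTW slice at the image point, this map is the composite of our family with the CTW classifying/gauge-fixing map, i.e.\ CTW's local universal property from their construction of the slice. Alternatively, I can show that the differential commutes with the complex structures: complex gauge directions are killed in the quotient, and the remaining component of $d(\nabla^+,\varphi^+)$ is the projection to the harmonic complement of a complex-linear tangent vector. By CTW's description of $I$, that projection is complex linear.

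\emph{Expected obstacle.} The main difficulty is this last step: showing that our smoothly varying Hitchin representatives map holomorphically into CTW's space. CTW's complex structure lives on a slice of the joint space of $(J,\nabla,\Phi)$, which includes variation of $J$. I must therefore match their tangent complex with \eqref{chlocal:eq:reduced-differential} and verify that the Beltrami correction $-D_1^*G_2M(\dot b)$ in Lemma~\ref{chlocal:lem:hodge-inverse} corresponds to their $\theta$-component under $\Phi=2i\varphi$. I would do this at the level of tangent complexes at each stable point. Bijectivity and agreement on points then come from stability and the uniqueness in Hitchin--Kobayashi.
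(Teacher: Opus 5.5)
Your overall plan is reasonable: build holomorphic families from the slices, then classify them. But the first direction skips the step that carries the content.

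\textbf{First direction: the missing integrability step.} The CTW normalization only gives you the Cauchy--Riemann identities $d_{i\dot s}J=J\,d_{\dot s}J$ and $d_{i\dot s}\Phi=i\,d_{\dot s}\Phi$. These do not by themselves produce a holomorphic family of curves and bundles over $S$. You must first show that the almost complex structure $\mathcal J(u,\dot s)=(J_su,i\dot s)$ on $P_{\mathrm{SL}}\times S$ is integrable, and likewise on $\Sigma\times S$. The paper does this by showing that the mixed Nijenhuis component $-(d_{i\dot s}J)u+J_s(d_{\dot s}J)u$ vanishes and then applying Newlander--Nirenberg.

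You instead pass directly to the coordinates $(b,\alpha)$ and assert that $\alpha$ is holomorphic. This cannot be done yet, because CTW's fibers are $(\Sigma,J_s)$, not the Beltrami fibers $J_{\mu(b(s))}$. Transporting the data needs the isotopic biholomorphisms $(\Sigma,J_s)\to X_{b(s)}$, covered by bundle maps, to form a holomorphic map of total spaces. That is precisely the integrability statement. Holomorphicity of $b(s)$ does not give it: precomposing with diffeomorphisms that depend non-holomorphically on $s$ keeps $b$ holomorphic but destroys integrability.

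Without integrability, Lemma~\ref{chlocal:lem:fiberwise-holomorphicity} is not available, since it assumes a pullback of the Beltrami family. That lemma also only proves holomorphicity of functions and homomorphisms. The holomorphic structure on the total bundle has to come from integrability of the total $(0,1)$-operator, as in Theorem~\ref{chlocal:thm:relative-family}. Once integrability is established, the total $\bar\partial$-equation for $\Phi$ follows. After that, your appeal to Theorem~\ref{thm:coarse} works just as well as the paper's use of Proposition~\ref{prop:nearby-completeness}.

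\textbf{Converse direction: workable, but the difficulty is misplaced.} Your route through $g^+$ can be made to work. Note that $\nu_{\R}\equiv0$ because $\fk_x=0$, and horizontal projection kills the complexified gauge directions. However, option~(a) assumes a universal property of the CTW space that is not available. The input option~(b) actually needs is that the Kuranishi family is $I$-holomorphic as a map into CTW's configuration space, which again means the same two Cauchy--Riemann identities.
\begin{itemize}
\item The paper obtains $d_{i\dot s}J=J\,d_{\dot s}J$ by evaluating integrability of the total $(0,1)$-operator on one parameter direction and one surface direction.
\item It obtains $d_{i\dot s}\Phi=i\,d_{\dot s}\Phi$ from $\Phi_s=2i\phi_s\bigl(dz+\mu(b(s))\,d\bar z\bigr)$, where $\phi_s$ and $\mu(b(s))$ are holomorphic in $s$.
\end{itemize}
CTW's defining property $d\pi(Iv)=I\,d\pi(v)$ then gives holomorphicity directly. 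There is no need to pass to Hitchin representatives. There is also no need to match the Beltrami correction $-D_1^*G_2M(\dot b)$ with CTW's $\theta$-component: that term is already covered by holomorphicity of $A(b,v)$, so the tangent-complex comparison you flag as the main obstacle is unnecessary.
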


\begin{proof}
We choose a parameter polydisc $S$ in one of the slices from \cite[Proposition~4.19]{CTW25}. For each $s$, the pair $(J_s,\Phi_s)$ is smooth on $P_{\mathrm{SL}}$, while the map $s\mapsto(J_s,\Phi_s)$ is holomorphic with values in the Sobolev spaces specified in \cite[Lemma~4.20]{CTW25}. Thus, for every real tangent vector $\dot s$ to $S$,
$$
d_{i\dot s}J=Jd_{\dot s}J,\qquad
d_{i\dot s}\Phi=i\,d_{\dot s}\Phi.
$$
Here $J_s$ is the complex structure on the principal bundle and induces the complex structure of the curve through the bundle projection. For each $s$, it is integrable and compatible with the principal action. We define an almost complex structure on $P_{\mathrm{SL}}\times S$ by
$$
\mathcal J(u,\dot s)=(J_su,i\dot s).
$$
Its Nijenhuis tensor vanishes on pairs tangent to $P_{\mathrm{SL}}$ and on pairs tangent to $S$. The mixed component is
$$
-(d_{i\dot s}J)u+J_s(d_{\dot s}J)u=0.
$$
The Newlander--Nirenberg theorem therefore gives an integrable complex structure on $P_{\mathrm{SL}}\times S$ \cite{NewlanderNirenberg1957}. The same calculation on $\Sigma\times S$ gives a proper holomorphic curve family. The projection from $P_{\mathrm{SL}}\times S$ and the principal $\SL_r(\C)$ action are holomorphic, so the associated vector bundle is holomorphic and carries its induced holomorphic determinant trivialization. The form $\Phi_s$ has relative type $(1,0)$, and its relative Dolbeault derivative vanishes by the equations defining the slice. The fiberwise equation together with $d_{i\dot s}\Phi=i\,d_{\dot s}\Phi$ gives the total $\bar\partial$-equation for $\Phi$. Hence $\Phi/(2i)$ is a holomorphic relative Higgs field. Proposition~\ref{prop:nearby-completeness} gives the resulting map to $\cM$.

Conversely, the smooth trivialization used in our Kuranishi construction defines the total $(0,1)$-operator. Evaluating its integrability on one tangent vector to $S$ and one tangent vector to $\Sigma$ gives
$$
d_{i\dot s}J=Jd_{\dot s}J.
$$
In a coordinate $z$ on the central curve, we write
$$
\Phi_s=2i\phi_s\bigl(dz+\mu(b(s))\,d\bar z\bigr).
$$
Both $\phi_s$ and $\mu(b(s))$ depend holomorphically on $s$. Hence
$$
d_{i\dot s}\Phi=i\,d_{\dot s}\Phi.
$$
In \cite[Section~4.8]{CTW25}, the complex structure on the regularly stable quotient is defined so that $d\pi(Iv)=I\,d\pi(v)$, where $\pi$ is the quotient map and $I$ denotes the induced complex structure on the quotient as well. The preceding Cauchy--Riemann identities therefore imply that our family induces a holomorphic map to this quotient. The local slices of \cite[Proposition~4.19]{CTW25} identify it with a holomorphic map to the corresponding moduli chart. This gives the comparison map on the stable locus. For a family over an arbitrary reduced analytic base, local completeness and compatibility with base change give such a holomorphic map after restricting the base.
\end{proof}

\begin{proof}[Proof of Theorem~\ref{thm:actions-comparison}]
Corollary~\ref{prop:natural-actions} establishes the actions and describes the stable locus. The two holomorphic comparison maps induced by the families in Proposition~\ref{prop:ctw-family-comparison} preserve the marked Higgs bundles. Their compositions are the identity, and their restrictions agree on chart overlaps by reducedness, yielding the required biholomorphism over $\cT_g$. This biholomorphism is equivariant for changes of marking and Higgs scaling.
\end{proof}

\end{document}